\newtheorem{Thm}{Theorem}[section]
\newtheorem{Cor}[Thm]{Corollary}
\newtheorem{Lem}[Thm]{Lemma}
\newtheorem{Prop}[Thm]{Proposition}
\newtheorem{Claim}[Thm]{Claim}
\theoremstyle{definition}
\newtheorem{Def}[Thm]{Definition}
\newtheorem{Exa}[Thm]{Example}
\newtheorem{Remark}[Thm]{Remark}
\theoremstyle{remark}
\def\ldots{\mathinner{\ldotp\ldotp\ldotp}}
\def \cal{\mathcal}
\def \Bbb{\mathbb}
\def\eps{\varepsilon}
\def\Ndb{\mathbb N}
\def\Rdb{\mathbb R}
\newcommand{\wsconv}{\overline{\rm conv}^*}
\newcommand{\Natural}{\mathbb N}
\newcommand{\Real}{\mathbb R}
\newcommand{\abs}[1]{\left\vert#1\right\vert}
\newcommand{\set}[1]{\left\{#1\right\}}
\newcommand{\restricted}{\upharpoonright}
\newcommand{\dist}{\mathop{\mathrm{dist}}\nolimits}
\def\newball{B_{|\ |}}
\def\wstoo{\stackrel{w^*}{\longrightarrow}}
\newcommand{\bib}{\bibitem}
\begin{document}

\title{Szlenk indices of convex hulls}

\author{G. Lancien$^\clubsuit$}
\address{$\clubsuit$ Laboratoire de Math\'ematiques de Besan\c con, Universit\'e Bourgogne  Franche-Comt\'e, 16 route de Gray, 25030 Besan\c con C\'edex, France}
\email{gilles.lancien@univ-fcomte.fr}

\author{A. Proch\'{a}zka$^\diamondsuit$}
\address{$\diamondsuit$ Laboratoire de Math\'ematiques de Besan\c con, Universit\'e Bourgogne  Franche-Comt\'e, 16 route de Gray, 25030 Besan\c con C\'edex, France}
\email{antonin.prochazka@univ-fcomte.fr}

\author{M. Raja$^\spadesuit$}
\address{$\spadesuit$ Departamento de Matem\'{a}ticas, Universidad de Murcia, Campus de Espinardo, 30100 Espinardo, Murcia, Spain}
\email{matias@um.es}

\subjclass[2010]{46B20}

\thanks{The second named author was partially supported by a ``Bonus Qualit\'{e} Recherche'' from the Universit\'{e} de Franche-Comt\'{e}.}
\thanks{The third named author was partially supported by the grants MINECO/FEDER MTM2014-57838-C2-1-P and Fundaci\'on S\'eneca CARM 19368/PI/14.}

\keywords{measure of non-compactness, Szlenk index, renorming Banach spaces}

\maketitle

\begin{abstract} We study the general measures of non-compactness defined on subsets of a dual Banach space, their associated derivations and their $\omega$-iterates. We introduce the notions of convexifiable and sublinear measure of non-compactness and investigate the properties of its associated fragment and slice derivations. We apply our results to the Kuratowski measure of non-compactness and to the study of the Szlenk index of a Banach space. As a consequence, we obtain that the Szlenk index and the convex Szlenk index of a separable Banach space are always equal. We also give, for any countable ordinal $\alpha$, a characterization of the Banach spaces with Szlenk index bounded by $\omega^{\alpha+1}$ in terms of the existence of an equivalent renorming. This extends a result by Knaust, Odell and Schlumprecht on Banach spaces with Szlenk index equal to $\omega$.
\end{abstract}

\section{Introduction}

In this paper we deal with the Szlenk and the convex Szlenk index of Banach spaces. Let us first recall their definitions.
Let $X$ be a Banach space, $K$ a weak$^*$-compact subset of its dual $X^*$ and $\eps>0$.
Then we define
$$s_\eps'(K)=\{ x^* \in K,\ {\rm for\ any}\ {\rm weak^*-neighborhood}\ U\ {\rm of}\ x^*,\  {\rm diam}(K \cap U) \ge \varepsilon\}$$
and inductively the sets $s_\eps^\alpha(K)$ for $\alpha$ ordinal as follows: $s_\eps^{\alpha+1}(K)=s_\eps'(s_\eps^\alpha(K))$ and $s_\eps^\alpha(K)=\bigcap_{\beta<\alpha}s_\eps^\beta(K)$ if $\alpha$ is a limit ordinal.\\
Then $Sz(K,\eps)=\inf\{\alpha,\ s^\alpha_\eps(K)=\emptyset\}$ if it exists and we denote $Sz(K,\eps)=\infty$ otherwise. Next we define $Sz(K)=\sup_{\eps>0}Sz(K,\eps)$.
The closed unit ball of $X^*$ is denoted $B_{X^*}$ and the Szlenk index of $X$ is $Sz(X)=Sz(B_{X^*})$.\\
Let us also denote $A^0_\varepsilon:=K$,  $A^{\alpha+1}_\eps$ is the weak$^*$-closed convex hull of $s'_\eps(A^\alpha_\eps)$ and $A^{\alpha}_\eps:=\bigcap_{\beta<\alpha}A^{\beta}_\eps$ if $\alpha$ is a limit ordinal.
Now we set $Cz(K,\eps)=\inf\{\alpha,\ A^{\alpha}_\eps=\emptyset\}$ if it exists and $Cz(K,\eps)=\infty$ otherwise. Then $Cz(K)=\sup_{\eps>0}Cz(K,\eps)$ and the convex Szlenk index of $X$ is  $Cz(X)=Cz(B_{X^*})$.

The Szlenk index  was first introduced by W. Szlenk \cite{Szlenk1968}, in a slightly different form, in order to prove that there is no separable reflexive Banach space universal for the class of all separable reflexive Banach spaces.
Another striking fact is that the isomorphic classification of a separable $C(K)$ space is perfectly determined by the value of its Szlenk index.
This is a consequence of some classical work by C.~Bessaga and  A.~Pe\l czy\'nski \cite{BessagaPelczynski1960}, D.E.~Alspach and Y.~Benyamini \cite{AlspachBenyamini1979},  C.~Samuel \cite{Samuel1983} and A.A.~Milutin \cite{Milutin1966} (see also \cite{Rosenthal2003} for a survey on $C(K)$-spaces).

One of the goals of this paper is to obtain a general renorming result for Banach spaces with a prescribed Szlenk index in the spirit of the following important theorem due to Knaust, Odell and Schlumprecht \cite{KnaustOdellSchlumprecht1999}: if $X$ is a separable Banach space and $Sz(X)\le \omega$, where $\omega$ is the first infinite ordinal, then $X$ admits an equivalent norm whose dual norm is such that for any $\eps>0$, there exists $\delta >0$ satisfying $s_\eps'(B)\subset (1-\delta)B$, where $B$ is the closed unit ball of this equivalent dual norm. Such a norm is said to be weak$^*$-uniformly Kadets-Klee. This has been quantitatively improved in \cite{GKL2001} and extended to the non-separable case in \cite{Raja2010}. We will prove a similar result for other values of the Szlenk index. More precisely, we show (Theorem \ref{renorming}) that a separable Banach space $X$ satisfies  $Sz(X)\le \omega^{\alpha+1}$ with $\alpha$ countable if and only if $X$ admits an equivalent norm whose dual norm is such that for any $\eps>0$, there exists $\delta >0$ satisfying $s_\eps^{\omega^\alpha}(B)\subset (1-\delta)B$, where $B$ is the closed unit ball of this equivalent dual norm. We say that such a norm is $\omega^\alpha$-UKK$^*$. It is worth recalling that $Sz(X)$ is always of the form $\omega^\alpha$ (see Lemma~\ref{formSz} for a slightly more general statement, \cite{Sersouri1989} for the original idea, or \cite{Lancien2006}). Let us also mention that C.~Samuel proved in \cite{Samuel1983} that $Sz(C_0([0,\omega^{\omega^\alpha})))=\omega^{\alpha+1}$ whenever $\alpha$ is a countable ordinal. A different proof of this computation is given in \cite{HajekLancien2007} by showing that the natural norm of $C_0([0,\omega^{\omega^\alpha}))$ is $\omega^\alpha$-UKK$^*$.

One of the inconveniences of the Szlenk derivation is that it does not preserve convexity. This explains why it is difficult to obtain renorming results related to this derivation. In contrast, a derivation based on peeling off slices (i.e. intersections with half spaces) preserves the convexity and allows to use distance functions to the derived sets in order to build a good equivalent norm (see \cite{Lancien1995} for instance).

In order to overcome this difficulty, we will study the fragment and slice derivations associated with general measures of non-compactness as they were introduced in \cite{Raja2007}. In section \ref{noncompactness}, we recall these definitions and also introduce the $\omega$-iterated measure $\eta^\omega$ associated with a measure of non-compactness $\eta$. In section \ref{convexifiable}, we introduce the notions of convexifiable and homogeneous measure of non-compactness. Then we prove a crucial result (Proposition \ref{convex}) on the properties of the slice derivation associated with a general convexifiable measure of non-compactness. In section \ref{sublinear} we explore the notion of sublinear measure of non-compactness and obtain a sharp comparison between the slice and fragment derivations for certain convexifiable sublinear measures of non-compactness. Section \ref{kuratowski} is devoted to the applications of our general results to the Kuratowski measure of non-compactness, denoted $\sigma$. This measure is of special interest to us, as its fragment derivation is exactly the Szlenk derivation. The main result of this section is Theorem \ref{main} which asserts that for any $n\in \Ndb$, the iterate $\sigma^{\omega^n}$ of $\sigma$ is convexifiable. It is then a key ingredient for proving that $Cz(K)=Sz(K)$ for any weak$^*$-compact subset $K$ of the dual of a separable Banach space such that $Sz(K)\le \omega^n$, for some integer $n$ (Corollary \ref{SzCz}). This together with some earlier work by P. H\'{a}jek and T. Schlumprecht \cite{HajekSchlumprecht2014} implies that for any separable Banach space $X$, $Cz(X)=Sz(X)$.
Finally, with all these ingredients, we state and prove our renorming result in section \ref{norm}.

\section{Measures of non-compactness and associated derivations}\label{noncompactness}

\begin{Def}\label{def1} Let $X$ be a Banach space. We call {\it measure of non-compactness on} $X^*$, any map $\eta$ defined on the weak$^*$-compact subsets of $X^*$ with values in $[0,\infty)$ and satisfying the following properties:
\begin{itemize}
\item[(i)] $\eta(\{x^*\})=0$ for any $x^* \in X^*$.
\item[(ii)] If $A_1,..,A_n$ are weak$^*$-compact subsets of $X^*$, then $\eta(\bigcup_{i=1}^n A_i)=\max_{i} \eta(A_i)$.
\item[(iii)] There exists $b>0$ such that for any weak$^*$-compact subset $A$ of $X^*$ and any $\lambda> 0$, $\eta(A + \lambda B_{X^*}) \leq \eta(A) + \lambda b  $.
\end{itemize}
\end{Def}

\begin{Remark} Note that (ii) implies that a measure of non-compactness $\eta$ on $X^*$ is monotone in the following sense: $\eta(A) \leq \eta(B)$, whenever $A \subset B$ are weak$^*$-compact subsets of $X^*$.\\
Note also that if $\eta$ is a measure of non-compactness on $X^*$, then $\eta(A)=0$, for any norm-compact subset $A$ of $X^*$.
\end{Remark}

\begin{Exa} Let $A$ be a weak$^*$-compact subset of $X^*$. We call {\it Kuratowski measure of non-compactness of} $A$ and denote $\sigma(A)$, the infimum of all $\eps>0$ such that $A$ can be covered by a finite family of closed balls of diameter equal to $\eps$.\\
Conditions (i), (ii) and (iii) are clearly satisfied. Note that (iii) is satisfied with $b=1$.
\end{Exa}

\medskip Following \cite{Raja2007}, we now define two set operations associated with a given measure of non-compactness $\eta$. Let us agree that if $E$ is a subset of a dual Banach space $X$, then $\overline{E}^*$ denotes the weak$^*$-closure of $E$.

We start with the {\it fragment derivation}. Given $\eps>0$ and $A$ a weak$^*$-compact subset of $X^*$, we set
$$ [\eta]'_{\varepsilon}(A)=\{ x^* \in A,\ {\rm for\ any}\ {\rm weak^*-neighborhood}\ U\ {\rm of}\ x^*,\  \eta(A \cap \overline{U}^*) \ge \varepsilon\}.$$
For any ordinal $\gamma$, the sets $[\eta]^\gamma_{\varepsilon}(A)$ are defined by $[\eta]^{\gamma+1}_{\varepsilon}(A)=[\eta]'_{\varepsilon}([\eta]^\gamma_{\varepsilon}(A))$ and $[\eta]^\gamma_{\varepsilon}(A)=\bigcap_{\beta<\gamma}[\eta]^\beta_{\varepsilon}(A)$ if $\gamma$ is a limit ordinal.\\

Similarly we define the {\it slice derivation} by
$$ \langle \eta\rangle '_{\varepsilon}(A)=\{ x^* \in A,\ {\rm for\ any}\ {\rm weak^*-open\ halfspace}\ H\ {\rm containing}\ x^*,\   \eta(A \cap \overline{H}^*) \ge\varepsilon\}.$$
Then, for an ordinal $\gamma$, the  set $\langle \eta\rangle^\gamma_{\varepsilon}(A)$ is defined in an obvious way as before. Note that we clearly have that $[\eta]^\gamma_{\varepsilon}(A) \subset \langle\eta\rangle^\gamma_{\varepsilon}(A)$.

\medskip We begin with a very basic property of the fragment derivation associated with a measure of non-compactness.

\begin{Lem}\label{referee1} Let $X$ be a Banach space and $\eta$ a measure of non-compactness on $X^*$ and let $A$ be a weak$^*$-compact subset of $X^*$. Then for any  $B$ weak$^*$-closed subset of $A$ and any $\eps>0$ such that $B\cap [\eta]'_\eps(A)=\emptyset$, we have that $\eta(B)<\eps$.
\end{Lem}

\begin{proof} For any $x^*$ in $B$ there exists a weak$^*$-neighborhood $U_{x^*}$ of $x^*$ so that $\eta(A \cap \overline{U_{x^*}}^*)<\eps$. Then, since $B$ is included in $A$ and weak$^*$-compact, there exist $x_1^*,..,x_n^*$ in $B$ such that $B\subset \bigcup_{k=1}^n (A \cap \overline{U_{x_k^*}}^*).$\\
The conclusion now follows from property (ii) of Definition \ref{def1}.
\end{proof}

We now define the iterates of a measure of non-compactness.

\begin{Def} Let $X$ be a Banach space and $\eta$ a measure of non-compactness on $X^*$. The $\omega$-iterated measure of $\eta$ is defined by
$$ \eta^{\omega}(A)=\inf\{ \varepsilon>0: [\eta]^{\omega}_{\varepsilon}(A) = \emptyset \}. $$
It will also be convenient to define $\eta^1=\eta^{\omega^0}:=\eta$ and then inductively $\eta^{\omega^{n+1}}:=(\eta^{\omega^n})^\omega$.
\end{Def}

\begin{Lem}\label{iterate} If $\eta$ is a measure of non-compactness on $X^*$, then
$\eta^{\omega}$ is a measure of non-compactness on $X^*$.
\end{Lem}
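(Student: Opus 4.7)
The plan is to verify properties (o)--(iii) of Definition \ref{def1} for $\eta^\omega$ by establishing suitable statements at level $\alpha$ for $[\eta]^\alpha_\varepsilon$ and then specializing to $\alpha=\omega$. Properties (o) and (i) come essentially for free: $\eta(\{x^*\})=0$ forces $[\eta]'_\varepsilon(\{x^*\})=\emptyset$ for every $\varepsilon>0$, so $\eta^\omega(\{x^*\})=0$, and monotonicity of $\eta$ transfers to $[\eta]'_\varepsilon$, hence by transfinite induction to every $[\eta]^\alpha_\varepsilon$. For (ii) I would first upgrade Lemma \ref{union} to the equality $[\eta]'_\varepsilon(A\cup B)=[\eta]'_\varepsilon(A)\cup[\eta]'_\varepsilon(B)$: if $x^*$ lay outside both sets on the right, intersect two witnessing weak$^*$-neighborhoods and use property (ii) of $\eta$ to produce a single neighborhood with $\eta$-value below $\varepsilon$. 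Transfinite induction lifts this equality to all $\alpha$; the limit step uses $\bigcap_\beta(X_\beta\cup Y_\beta)=(\bigcap_\beta X_\beta)\cup(\bigcap_\beta Y_\beta)$ for decreasing weak$^*$-compact nets, a short cofinality argument. Taking $\alpha=\omega$ and passing to infima yields (ii) for $\eta^\omega$.

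The substantial work is (iii). The key base inclusion to prove is
\[
[\eta]'_{\varepsilon+\lambda b}(A+\lambda B_{X^*})\subset[\eta]'_\varepsilon(A)+\lambda B_{X^*},
\]
with $b$ the constant from property (iii) of $\eta$. Equivalently, for $x^*\notin[\eta]'_\varepsilon(A)+\lambda B_{X^*}$ I produce a weak$^*$-neighborhood $U$ of $x^*$ with $\eta((A+\lambda B_{X^*})\cap\overline{U}^*)<\varepsilon+\lambda b$. If $x^*\notin A+\lambda B_{X^*}$ this is trivial; otherwise the set $A_0:=A\cap(x^*-\lambda B_{X^*})$ is a nonempty weak$^*$-compact subset of $A$ (Banach--Alaoglu is used on $\lambda B_{X^*}$) disjoint from $[\eta]'_\varepsilon(A)$, so I can cover it by finitely many weak$^*$-open $V_{a_1},\dots,V_{a_n}$ with $\eta(A\cap\overline{V_{a_i}}^*)<\varepsilon$. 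The residue $C:=(A\setminus\bigcup_i V_{a_i})+\lambda B_{X^*}$ is weak$^*$-compact (continuous image of the weak$^*$-compact product under addition) and avoids $x^*$, so by regularity of the weak$^*$ topology there is a weak$^*$-open $U$ with $x^*\in U$ and $\overline{U}^*\cap C=\emptyset$. This forces
\[
(A+\lambda B_{X^*})\cap\overline{U}^*\subset\bigcup_i(A\cap\overline{V_{a_i}}^*)+\lambda B_{X^*},
\]
and properties (ii), (iii) of $\eta$ bound the $\eta$-value of the right side strictly below $\varepsilon+\lambda b$, as desired.

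Transfinite induction then lifts this inclusion to $[\eta]^\alpha_{\varepsilon+\lambda b}(A+\lambda B_{X^*})\subset[\eta]^\alpha_\varepsilon(A)+\lambda B_{X^*}$ for every ordinal $\alpha$; the limit step uses the identity $\bigcap_\beta(X_\beta+\lambda B_{X^*})=(\bigcap_\beta X_\beta)+\lambda B_{X^*}$ for decreasing weak$^*$-compact $(X_\beta)$, obtained from the finite intersection property applied to the decreasing weak$^*$-compact sets $X_\beta\cap(y-\lambda B_{X^*})$. Now for $\varepsilon>\eta^\omega(A)$ one has $[\eta]^\omega_\varepsilon(A)=\emptyset$, and since $\emptyset+\lambda B_{X^*}=\emptyset$ the lifted inclusion forces $[\eta]^\omega_{\varepsilon+\lambda b}(A+\lambda B_{X^*})=\emptyset$. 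Hence $\eta^\omega(A+\lambda B_{X^*})\le\eta^\omega(A)+\lambda b$, so (iii) holds with the same constant $b$. The main obstacle is the base-level inclusion for (iii): one must combine weak$^*$-compactness of $\lambda B_{X^*}$, weak$^*$-continuity of addition, and a separation argument to produce the neighborhood $U$ disjoint from the residue $C$; once this is in hand the transfinite bookkeeping and the trick $\emptyset+\lambda B_{X^*}=\emptyset$ conclude the proof.
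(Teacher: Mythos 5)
Your proposal is correct and follows essentially the same outline as the paper's proof: properties (o), (i) and (ii) are transferred to $\eta^\omega$ via the corresponding facts for $[\eta]'_\varepsilon$, and property (iii) rests on the key inclusion $[\eta]'_{\varepsilon+\lambda b}(A+\lambda B_{X^*})\subset[\eta]'_\varepsilon(A)+\lambda B_{X^*}$, iterated. Two minor differences from the paper, both immaterial: (1) to produce the neighbourhood of $x^*$ whose weak$^*$-closure meets $A+\lambda B_{X^*}$ in an $\eta$-small set, you use a separation argument (cover $A\cap(x^*-\lambda B_{X^*})$ by small slices, form the weak$^*$-compact residue $C$, separate $x^*$ from $C$ by regularity), whereas the paper takes a neighbourhood $U$ defined by finitely many half-space conditions $u^*(x_i)<a_i$ and simply relaxes each $a_i$ to $a_i+\lambda$ to define $V$; both amount to the same trick, yours being a bit more self-contained. (2) For passing from the one-step inclusion to the $\omega$-level statement you invoke the limit-step identity $\bigcap_\beta(X_\beta+\lambda B_{X^*})=(\bigcap_\beta X_\beta)+\lambda B_{X^*}$ for decreasing weak$^*$-compacts; the paper avoids this by observing that $[\eta]^\omega_\varepsilon(A)=\emptyset$ already forces $[\eta]^n_\varepsilon(A)=\emptyset$ for some finite $n$ (nested weak$^*$-compacts with empty intersection), so one only needs to iterate the one-step inclusion finitely many times. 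Your route is correct but slightly heavier; the finite-$n$ observation is the shorter way out.
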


\begin{proof} Condition (i) of the definition is clearly satisfied by  $\eta^{\omega}$.\\
The condition (ii) for $\eta$ yields that $[\eta]'_{\varepsilon}(\bigcup_{i=1}^n  A_i) = \bigcup_{i=1}^n  [\eta]'_{\varepsilon}(A_i)$ whenever $A_1,..,A_n$ are weak$^*$-compact subsets of $X^*$. After iterating, this implies that $\eta^{\omega}$ satisfies condition (ii) of the definition.\\
Property (iii) comes from the following observation. Let $b>0$ be the constant given by condition (iii) for $\eta$. Then for any weak$^*$-compact subset $A$ of $X^*$ and any $\lambda> 0$,
\begin{equation}\label{eq1}
[\eta]'_{\varepsilon+\lambda b}(A+\lambda B_{X^*}) \subset [\eta]'_{\varepsilon}(A) + \lambda B_{X^*}.
\end{equation}
Indeed, take any
$$x^* \in A+\lambda B_{X^*} \setminus ([\eta]'_{\varepsilon}(A) + \lambda B_{X^*}).$$
Then, there exists a weak$^*$-neighborhood $U$ of $x^*$ such that $\overline{U}^*$ is disjoint from  $[\eta]'_{\varepsilon}(A) + \lambda B_{X^*}$. Then consider $V=\overline{U}^*+\lambda B_{X^*}$. It is clear that $V$ is a weak$^*$-closed neighborhood of $x^*$ such that $V$ is disjoint from $[\eta]'_{\varepsilon}(A)$. Then it follows from Lemma \ref{referee1} that $\eta(V\cap A)<\eps$. By the definition of $V$ we also have
$$ \overline{U}^* \cap (A+\lambda B_{X^*}) \subset (V \cap A)+ \lambda B_{X^*}.$$
This yields the estimate $\eta(\overline{U}^* \cap (A+\lambda B_{X^*})) < \varepsilon + \lambda b$. Therefore $x^*$ does not belong to
$[\eta]'_{\varepsilon+\lambda b}(A+\lambda B_{X^*})$, which finishes the proof of (\ref{eq1}).\\
Finally, this implies by iteration that $[\eta]^{\omega}_{\varepsilon+\lambda b}(A+\lambda B_{X^*})
=\emptyset$ whenever $[\eta]^{\omega}_{\varepsilon}(A)=\emptyset$, which yields property (iii) for $\eta^\omega$, with the same constant $b$ as for $\eta$.

\end{proof}

We will need the following elementary lemma.

\begin{Lem}\label{elementary} Let $X$ be a Banach space, $\eta$ a measure of non-compactness on $X^*$ and let $\eps'>\eps>0$. Then, for any weak$^*$-compact subset $A$ of $X^*$ we have
\[
[\eta^{\omega}]'_{\varepsilon'}(A) \subset [\eta]^{\omega}_{\varepsilon}(A)\subset [\eta^{\omega}]'_{\varepsilon}(A).
\]
Also, for any $n \in \Natural$, we have
\[
 [\eta^{\omega^n}]'_{\eps'}(A)\subset [\eta]^{\omega^n}_{\eps}(A)\subset [\eta^{\omega^n}]'_{\eps}(A).
\]

\end{Lem}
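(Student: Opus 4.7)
The plan is to prove the $n=1$ statement first and then bootstrap to general $n$ by induction on $n$. By Lemma \ref{iterate} each $\eta^{\omega^n}$ is itself a measure of non compactness, so the base case can be re-applied to it. The key structural ingredient for the induction step is the ordinal identity $[\eta]^{\gamma+\delta}_\eps=[\eta]^\delta_\eps\circ[\eta]^\gamma_\eps$ (a routine transfinite induction on $\delta$), which together with $\omega^{n+1}=\omega^n\cdot\omega$ allows one to pull apart iterates.

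For the first inclusion at $n=1$, I take $x^*\in[\eta^\omega]'_{\eps'}(A)$ and any weak$^*$-neighborhood $V$ of $x^*$. Then $\eta^\omega(A\cap\overline V^*)\ge\eps'>\eps$ forces $[\eta]^\omega_\eps(A\cap\overline V^*)\ne\emptyset$, hence $[\eta]^{k+1}_\eps(A\cap\overline V^*)\ne\emptyset$ for every $k<\omega$. An induction on $k$ then shows $x^*\in[\eta]^k_\eps(A)$: the nonemptiness of $[\eta]'_\eps([\eta]^k_\eps(A\cap\overline V^*))$ yields $\eta([\eta]^k_\eps(A\cap\overline V^*))\ge\eps$, while the monotonicity of the derivation gives $[\eta]^k_\eps(A\cap\overline V^*)\subset[\eta]^k_\eps(A)\cap\overline V^*$, so condition (i) of Definition \ref{def1} delivers $\eta([\eta]^k_\eps(A)\cap\overline V^*)\ge\eps$. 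For the second inclusion, given $x^*\in[\eta]^\omega_\eps(A)$ and a weak$^*$-open neighborhood $V$ of $x^*$, I iterate Lemma \ref{union} applied to $A=(A\cap\overline V^*)\cup(A\setminus V)$ (using monotonicity of $[\eta]'_\eps$ at each step) to obtain
\[
[\eta]^k_\eps(A)\subset[\eta]^k_\eps(A\cap\overline V^*)\cup(A\setminus V)\qquad(k<\omega),
\]
and use the distributivity $\bigcap_k(X_k\cup Y)=(\bigcap_k X_k)\cup Y$ to descend to level $\omega$. Since $x^*\notin A\setminus V$, this forces $[\eta]^\omega_\eps(A\cap\overline V^*)\ne\emptyset$, hence $\eta^\omega(A\cap\overline V^*)\ge\eps$, as required.

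For the induction from $n$ to $n+1$, a secondary induction on $k$ would establish the finite sandwich
\[
[\eta^{\omega^n}]^k_{\eps'}(A)\subset[\eta]^{\omega^n\cdot k}_\eps(A)\subset[\eta^{\omega^n}]^k_\eps(A),
\]
by combining the outer hypothesis at level $n$ (applied to the intermediate set at an interpolated threshold $\eps<\eps_1<\eps'$) with the inner hypothesis at $k$ and the ordinal composition identity. Intersecting over $k$ and using $\omega^n\cdot\omega=\omega^{n+1}$ yields the analogous sandwich at level $\omega$, and composing this with the $n=1$ statement applied to $\tilde\eta:=\eta^{\omega^n}$ (at a further interpolated threshold between $\eps'$ and $\eps$) delivers the conclusion at $n+1$. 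The main obstacle is the threshold bookkeeping: each appeal to the outer hypothesis consumes some of the slack $\eps'-\eps$, so one must verify that finitely many nested interpolations $\eps<\eps_2<\eps_1<\eps'$ fit throughout, which is exactly what the strict inequality in the statement guarantees.
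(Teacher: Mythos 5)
Your proposal is correct and follows essentially the same approach as the paper's proof: both directions of the $n=1$ sandwich are obtained by localization along Lemma~\ref{union}, and the general $n$ case proceeds by an outer induction that applies the $n=1$ result to the measure $\eta^{\omega^n}$ (legitimate by Lemma~\ref{iterate}) together with a finite $k$-sandwich coming from the inductive hypothesis and ordinal composition. The only differences are cosmetic (the paper argues the first $n=1$ inclusion by contrapositive rather than directly), and your concern about threshold bookkeeping is overcautious: since the inductive hypothesis at each level $n$ is already quantified over all $\eps'>\eps$, only a single interpolation is needed per step of the outer induction and no slack accumulates through the inner $k$-induction.
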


\begin{proof} If $x^* \in A \setminus  [\eta^{\omega}]'_{\varepsilon}(A)$ then there is a weak$^*$-neighborhood $U$ of $x^*$  such that $\eta^\omega(A\cap \overline{U}^*)<\eps$ and therefore  $[\eta]^\omega_{\varepsilon}(A \cap \overline{U}^*)=\emptyset$. This implies that $x^* \not \in [\eta]^{\omega}_{\varepsilon}(A)$.

On the other hand, if $x^* \in A \setminus [\eta]^{\omega}_{\varepsilon}(A)$ then there is $n \in {\Bbb N}$ such that
$x^* \not \in [\eta]^{n}_{\varepsilon}(A)$. If $U$ is a weak$^*$-neighborhood  of $x^*$ such that $\overline{U}^*
\cap [\eta]^{n}_{\varepsilon}(A)=\emptyset$, then $[\eta]^{n}_{\varepsilon}(A \cap \overline{U}^*)=\emptyset$ and so $\eta^\omega(A \cap \overline{U}^*)\le \eps<\eps'$ and $x^* \in A \setminus  [\eta^{\omega}]'_{\varepsilon'}(A)$.

Let us assume now that it has been proved that for some $n \in \Natural$,  $[\eta^{\omega^n}]'_{\eps'}(A) \subset [\eta]^{\omega^n}_\eps(A)$ for all $\eps'>\eps$ and all weak$^*$-compact $A$.
Since $\eta^{\omega^n}$ is a measure of non-compactness, we infer from the first statement of Lemma~\ref{elementary} and this inductive hypothesis that
$[\eta^{\omega^{n+1}}]'_{\eps'}(A) \subset [\eta^{\omega^n}]^\omega_{\eps''}(A) =
\bigcap_{m=1}^\infty [\eta^{\omega^n}]^m_{\eps''}(A) \subset
\bigcap_{m=1}^\infty [\eta]^{\omega^n\cdot m}_\eps(A) =
[\eta]^{\omega^{n+1}}_\eps(A)$
for all $\eps'>\eps''>\eps$ and any weak$^*$-compact subset $A$ of $X$.

Finally, when $[\eta]^{\omega^n}_\eps(A) \subset [\eta^{\omega^n}]'_\eps(A)$ for every weak$^*$-compact $A$ has been proved for $n \in \Natural$, we easily get that
$[\eta]^{\omega^{n+1}}_\eps(A) =
\bigcap_{m=1}^\infty [\eta]^{\omega^n\cdot m}_\eps(A) \subset
\bigcap_{m=1}^\infty [\eta^{\omega^n}]^m_\eps (A) =
[\eta^{\omega^n}]^\omega_\eps(A) \subset
[\eta^{\omega^{n+1}}]'_\eps(A)$
is true for every weak$^*$-compact $A$.
\end{proof}

We end this section with a lemma describing the link between the slice derivation and the fragment derivation.

\begin{Lem}\label{convexhull} Let $X$ be a Banach space, $\eta$ a measure of non-compactness on $X^*$ and $\eps>0$. Then, for any convex and weak$^*$-compact subset $K$ of $X^*$, we have that
\[
\langle \eta\rangle '_{\varepsilon}(K)=\wsconv[\eta]'_{\varepsilon}(K).
\]
\end{Lem}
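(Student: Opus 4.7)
The plan is to prove the two inclusions separately.

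For $\langle \eta\rangle'_\varepsilon(K)\subseteq \wsconv[\eta]'_\varepsilon(K)$, I will argue contrapositively. Taking $x^*\in K\setminus \wsconv[\eta]'_\varepsilon(K)$, the Hahn--Banach separation theorem applied to the weak$^*$-closed convex target yields a weak$^*$-closed halfspace $H$ with $x^*\in H$ and $H\cap \wsconv[\eta]'_\varepsilon(K)=\emptyset$, hence $H\cap [\eta]'_\varepsilon(K)=\emptyset$. Each $y^*\in K\cap H$ therefore admits a weak$^*$-open neighborhood $U_{y^*}$ with $\eta(K\cap \overline{U_{y^*}}^*)<\varepsilon$; extracting a finite subcover by weak$^*$-compactness of $K\cap H$ and applying property (ii) to the resulting weak$^*$-compact pieces produces
\[
\eta(K\cap H)\leq \max_i \eta(K\cap \overline{U_i}^*)<\varepsilon,
\]
which witnesses $x^*\notin \langle \eta\rangle'_\varepsilon(K)$.

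For the reverse inclusion $\wsconv[\eta]'_\varepsilon(K)\subseteq \langle \eta\rangle'_\varepsilon(K)$, I would check that $\langle \eta\rangle'_\varepsilon(K)$ is a weak$^*$-closed convex set containing $[\eta]'_\varepsilon(K)$, whence it must also contain the weak$^*$-closed convex hull. Convexity follows from linearity of the bounding functionals: if $z^*=tx^*+(1-t)y^*$ lies in a closed halfspace $H=\{\langle\cdot,x\rangle\leq a\}$, then $t\langle x^*,x\rangle+(1-t)\langle y^*,x\rangle\leq a$ forces at least one of $x^*,y^*$ to lie in $H$, and the slice hypothesis on that endpoint gives $\eta(K\cap H)\geq\varepsilon$. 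For the containment $[\eta]'_\varepsilon(K)\subseteq \langle \eta\rangle'_\varepsilon(K)$, if $x^*$ lies in the weak$^*$-interior of the closed halfspace $H$, the corresponding open halfspace is a weak$^*$-neighborhood of $x^*$ whose weak$^*$-closure is $H$, so the fragment condition gives $\eta(K\cap H)\geq\varepsilon$ immediately.

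The delicate point is the boundary case $x^*\in\partial H$, both for this containment and for the weak$^*$-closedness of $\langle \eta\rangle'_\varepsilon(K)$. The plan is to use the enlarged halfspaces $H_\delta=\{\langle\cdot,x\rangle\leq a+\delta\}$, whose interiors are genuine weak$^*$-neighborhoods of $x^*$: they satisfy $\eta(K\cap H_\delta)\geq \varepsilon$ for every $\delta>0$, and one then lets $\delta\to 0^+$. This is where convexity of $K$ and property (iii) intervene in tandem: convexity, via a projection of points of $K\cap H_\delta$ onto the face $K\cap H$, produces a norm-containment $K\cap H_\delta\subseteq (K\cap H)+\delta' B_{X^*}$ with $\delta'\to 0$, and then (iii) delivers $\eta(K\cap H_\delta)\leq \eta(K\cap H)+b\delta'$, so in the limit $\eta(K\cap H)\geq\varepsilon$. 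The weak$^*$-closedness of $\langle \eta\rangle'_\varepsilon(K)$ follows from an entirely analogous enlarging-and-limiting argument, applied to points $x^*$ in the weak$^*$-closure of the slice derivation. The main obstacle is precisely this quantitative limiting step, since $\eta$ is a priori only monotone and not continuous along decreasing nets of weak$^*$-compact convex sets, so the combination of convexity of $K$ and property (iii) must do all the work.
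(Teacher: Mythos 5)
The first inclusion $\langle\eta\rangle'_\varepsilon(K)\subseteq\wsconv[\eta]'_\varepsilon(K)$ is argued exactly as in the paper (Hahn--Banach separation, then a finite subcover combined with property (ii)), and your verification that $\langle\eta\rangle'_\varepsilon(K)$ is convex is also correct. The gap lies in the remedy you propose for the ``boundary case'' $x^*\in\partial H$. The containment $K\cap H_\delta\subseteq (K\cap H)+\delta' B_{X^*}$ with $\delta'\to 0$ is false for general convex weak$^*$-compact $K$: take $X^*=\ell_\infty$, $K=\{y^*\in\ell_\infty:\ 0\le y^*_1\le 1,\ 0\le y^*_{n+1}\le\min(n\,y^*_1,1)\ \text{for all}\ n\ge 1\}$ and $H=\{y^*:y^*_1\le 0\}$. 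Then $K\cap H=\{0\}$, while for every $\delta\in(0,1]$ the set $K\cap H_\delta$ contains $\delta e_1+e_{n+1}$ whenever $n\ge 1/\delta$, a point at distance $1$ from $K\cap H$. The projection you describe needs a point of $K$ strictly on the far side of the hyperplane, and here $K$ lies entirely on one side. Worse, this same $K$ with the Kuratowski measure shows that under the reading ``$x^*\in H$'' (boundary allowed) the lemma itself would fail: for any weak$^*$-neighborhood $U$ of $0$ and any small $t>0$, the points $t e_1+e_{n+1}$ with $n\ge 1/t$ and $n$ large all lie in $K\cap U$ and are pairwise $1$-separated, so $0\in[\sigma]'_1(K)\subseteq\wsconv[\sigma]'_1(K)$, yet $\sigma(K\cap H)=0$, so $0$ would not lie in $\langle\sigma\rangle'_1(K)$.

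What this tells you is that the definition of the slice derivation must be read with $x^*$ in the \emph{interior} of $H$, i.e.\ $H=\overline{V}^*$ for a weak$^*$-open halfspace $V\ni x^*$. Under that reading the open halfspaces through $x^*$ form a subfamily of its weak$^*$-neighborhoods, so $[\eta]'_\varepsilon(K)\subseteq\langle\eta\rangle'_\varepsilon(K)$ is immediate; weak$^*$-closedness of $\langle\eta\rangle'_\varepsilon(K)$ is equally immediate, since $\langle x^*,x\rangle<a$ and $x^*_\alpha\wstoo x^*$ force $\langle x^*_\alpha,x\rangle<a$ eventually; and the Hahn--Banach step in the other inclusion already produces $x^*$ in the interior of the separating halfspace. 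There is then no boundary case, and the limiting argument you set up --- which properties (i)--(iii) and convexity alone cannot support --- is not needed. This is the convention the paper is implicitly using when it dismisses the reverse inclusion as clear.
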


\begin{proof} Since $\langle \eta\rangle '_{\varepsilon}(K)$ is convex, weak$^*$-closed and contains $[\eta]'_{\varepsilon}(K)$, it is clear that $\wsconv[\eta]'_{\varepsilon}(K) \subset \langle \eta\rangle '_{\varepsilon}(K)$.\\
Consider now $x^*\in K\setminus \wsconv[\eta]'_{\varepsilon}(K)$. It follows from the Hahn-Banach theorem that we can find a weak$^*$-open half space $H$ containing $x^*$ and so that $\overline{H}^*\cap \wsconv[\eta]'_{\varepsilon}(K)=\emptyset$. Let $S=K\cap \overline{H}^*$. By Lemma \ref{referee1}, we have that $\eta(S)<\eps$. Therefore $x^*$ does not belong to $\langle \eta\rangle '_{\varepsilon}(K)$, which concludes the proof of this lemma.
\end{proof}

\section{convexifiable and homogeneous measures of non-compactness}\label{convexifiable}

\begin{Def} Let $X$ be a Banach space. We say that a measure of non-compactness $\eta$ on $X^*$ is \emph{convexifiable} if there exists $\kappa \ge 1$ such that for any weak$^*$-compact subset $A$ of $X^*$, we have that $\eta(\wsconv(A)) \leq \kappa \eta(A)$. The infimum of the set of all constants $\kappa$ satisfying the above property (which also belongs to this set) is called the \emph{convexifiability constant} of $\eta$.
\end{Def}

\begin{Def} Let $X$ be a Banach space. A measure of non-compactness $\eta$ on $X^*$ is \emph{homogeneous} if for any weak$^*$-compact subset $A$ of $X^*$ and any $\lambda$ in $\Rdb$, we have $\eta(\lambda A)=\abs{\lambda}\eta(A)$.
\end{Def}

The following lemma is straightforward.

\begin{Lem}\label{homogeneous} Let $X$ be a Banach space and $B$ a weak$^*$-compact subset of $X^*$. Assume that $\eta$ is a homogeneous measure of non-compactness on $X^*$. Then for any $\eps>0$ and any $\lambda$ in $(0,+\infty)$, we have
$$[\eta]'_{\lambda \eps}(\lambda B)=\lambda [\eta]'_\eps(B)\ \ {\rm and}\ \ \langle \eta \rangle'_{\lambda \eps}(\lambda B)=\lambda \langle \eta \rangle'_\eps(B).$$
\end{Lem}

The following proposition is crucial.

\begin{Prop}\label{convex}
Let $X$ be a Banach space and $\eta$ a homogeneous and convexifiable measure of non-compactness on $X^*$ with convexifiability constant $\kappa$. Assume that $A$ is a weak$^*$-compact subset of $X^*$ such that $[\eta]'_\varepsilon(A) \subset \lambda A$ for some $\lambda \in (0,1)$ and $\varepsilon>0$. Then
\[
\forall\, \eps'>\eps\ \ \ \langle \eta\rangle^\omega_{\kappa\varepsilon'}(\wsconv(A)) =\emptyset.
\]
\end{Prop}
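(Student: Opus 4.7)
The plan is to show by induction on $n \in \mathbb{N}$ that $K_n := \langle \eta \rangle^n_{\kappa \eps'}(\wsconv A) \subset \wsconv(\lambda^n A)$, and then use the boundedness of $A$ together with property (iii) to conclude that $K_m = \emptyset$ for some finite $m$, whence $\langle \eta \rangle^\omega_{\kappa \eps'}(\wsconv A) = \bigcap_n K_n = \emptyset$.

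Set $K := \wsconv A$. The base case $\langle \eta \rangle'_{\kappa \eps'}(K) \subset \lambda K$ is where the real work lies. Since $K$ is convex, Lemma \ref{convexhull} gives $\langle \eta \rangle'_{\kappa \eps'}(K) = \wsconv [\eta]'_{\kappa \eps'}(K)$, so (as $\lambda K$ is convex and weak*-closed) it suffices to show $[\eta]'_{\kappa \eps'}(K) \subset \lambda K$. Take $x^* \in K \setminus \lambda K$; Hahn--Banach yields $x_0 \in X$ and $\beta \in \mathbb{R}$ with $x^*(x_0) > \beta > \sup_{y^* \in \lambda K} y^*(x_0)$. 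Setting $H := \{u^* : u^*(x_0) \geq \beta\}$, the weak*-closure of the weak*-open halfspace $U := \{u^* : u^*(x_0) > \beta\} \ni x^*$, we have $\lambda K \cap H = \emptyset$; hence $[\eta]'_\eps(A) \cap H = \emptyset$, via the hypothesis $[\eta]'_\eps(A) \subset \lambda A \subset \lambda K$. By monotonicity, $[\eta]'_\eps(A \cap H) = \emptyset$, and a standard weak*-compactness argument using property (ii) gives $\eta(A \cap H) < \eps$; convexifiability then yields $\eta(\wsconv(A \cap H)) < \kappa \eps < \kappa \eps'$.

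The main obstacle is to bridge from this to $\eta(K \cap H) < \kappa \eps'$, since in general $K \cap H \not\subset \wsconv(A \cap H)$: the cap $K \cap H$ may have extreme points lying on the hyperplane face $F := K \cap \{u^* : u^*(x_0) = \beta\}$ that are not in $\wsconv(A \cap H)$. Combining Milman's theorem (which places $\mathrm{ext}(K)$ inside the weak*-closed set $A$) with Krein--Milman on the convex weak*-compact $K \cap H$ gives $K \cap H \subset \wsconv((A \cap H) \cup F)$, whence by (ii) and convexifiability
\[
\eta(K \cap H) \leq \kappa \max\bigl(\eta(A \cap H),\, \eta(F)\bigr).
\]
The key remaining task is therefore to bound $\eta(F) \leq \eps'$; the plan is to exploit the symmetric and radial structure of $A$ (which lifts to $K$) and the strict slack $\eps' > \eps$, by analyzing the extreme points of $F$ as convex combinations of one point of $A$ with $x_0$-value above $\beta$ and one with $x_0$-value below, and using the symmetry $A \cap (-H) = -(A \cap H)$ to control both contributions with the already established estimate $\eta(A \cap H) < \eps$.

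Given the base case, the inductive step is carried out by applying the same argument to $\lambda^n A$ in place of $A$ (it remains symmetric, radial and weak*-compact), with $[\eta]'_\eps(\lambda^n A) \subset [\eta]'_\eps(A) \subset \lambda A$ by monotonicity, and the separation level chosen as $\beta > \lambda^{n+1} M$ where $M := \sup_{y^* \in K} y^*(x_0)$. Finally, since $A$ is weak*-compact it is norm-bounded, say $A \subset r B_{X^*}$, so $\lambda^n K \subset \lambda^n r B_{X^*}$. Property (iii) applied to the singleton $\{0\}$ with multiplier $\lambda^n r$ (and $\eta(\{0\}) = 0$) gives $\eta(\lambda^n r B_{X^*}) \leq \lambda^n r b \to 0$, so for $n$ large enough $\eta(K_n) < \kappa \eps'$, whence $\eta(K_n \cap H') < \kappa \eps'$ for every weak*-closed halfspace $H'$, forcing $K_{n+1} = \langle \eta \rangle'_{\kappa \eps'}(K_n) = \emptyset$ and therefore $\langle \eta \rangle^\omega_{\kappa \eps'}(K) \subset K_{n+1} = \emptyset$.
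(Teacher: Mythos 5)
There is a genuine gap that you flag yourself but do not close, and closing it in the way you aim for is actually hopeless. You want to prove the (strong) base case $\langle\eta\rangle'_{\kappa\eps'}(K)\subset\lambda K$, i.e.\ that a single slice derivation already shrinks $K$ all the way down to $\lambda K$. The obstacle you correctly identify is that after writing $K\cap H\subset\wsconv((A\cap H)\cup F)$ you must control $\eta(F)$, where $F=K\cap\{x_0=\beta\}$ is a hyperplane section through the ``middle'' of $K$ (since $\beta$ is only just above $\sup_{\lambda K}x_0$). In general such a section has large $\eta$-measure: for the Kuratowski measure and $K=B_{\ell_1}$, the section $\{u^*\in B_{\ell_1}:u^*(e_1)=\beta\}$ is isometric to $(1-\beta)B_{\ell_1'}$ in the remaining coordinates, whose Kuratowski measure is $2(1-\beta)$, far larger than $\eps'$ unless $\beta$ is close to $1$. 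The symmetry of $A$ is of no help here, because the points of $A$ with $x_0$-value below $\beta$ that enter your proposed convex-combination decomposition are not at all localized near $-H$. In short: the plan for bounding $\eta(F)$ is not carried out, and if carried out at the stated separation level it fails.

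The fix is precisely the one the paper uses: do not try to shrink by one slicing step all the way to $\lambda K$. Introduce two slicing levels $\lambda<\zeta<\xi<1$ and only claim $\langle\eta\rangle'_{\kappa\eps'}(K)\subset\xi K$ for a suitable $\xi$ close to $1$. Writing $D=\wsconv(A\cap\{x\ge\zeta\})$ (which has $\eta(D)\le\kappa\eps$ because the $\zeta$-slice of $A$ misses $[\eta]'_\eps(A)\subset\lambda A$) and $Q=K\cap\{x\le\zeta\}$, one has $K\subset\mathrm{conv}(Q\cup D)$. The key point is that any $x^*\in K$ with $x^*(x)>\xi$ must be a convex combination $tx_1^*+(1-t)x_2^*$ with $x_1^*\in D$, $x_2^*\in Q$ and $t\ge t_\xi=(\xi-\zeta)/(1-\zeta)$, which tends to $1$ as $\xi\to1$; hence the slice $K\cap\{x\ge\xi\}$ lies in $D+2\beta(1-t_\xi)B_{X^*}$ and property (iii) gives $\eta$ of the slice $<\kappa\eps'$ once $\xi$ is close enough to $1$. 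Your single-level decomposition has no mechanism forcing the $D$-component to dominate, which is exactly why the uncontrolled hyperplane face appears. Finally, note that your inductive step is also off: monotonicity only gives $[\eta]'_\eps(\lambda^n A)\subset\lambda A$, not $\subset\lambda^{n+1}A$, so the separation at level $\beta>\lambda^{n+1}M$ does not kill $[\eta]'_\eps(\lambda^n A)\cap H$. The paper sidesteps all of this by iterating the uniform inclusion $\langle\eta\rangle'_{\kappa\eps'}(B)\subset\xi B$ and then invoking property (iii) on a small multiple of the ball, which is also how your last paragraph concludes; that final paragraph is fine once the base case is replaced by the weaker, two-level version.
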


\begin{proof}
Let $\eps'>\eps$, fix $ \zeta \in (\lambda, 1)$ and take some $ \xi \in (\zeta, 1)$ whose precise value will be fixed later. Let us write $B:=\wsconv(A)$.
The key step of the proof will be to show that $\langle \eta \rangle'_{\kappa\eps'}(B)\subset \xi B$ for $\xi$ close enough to $1$. In order to do so we need to estimate the $\eta$-measure of weak$^*$-slices of $B$ which are disjoint from $\xi B$.
Once we observe that each such slice $S$ lies in a small neighborhood of the weak$^*$-closed convex hull $D$ of a well chosen $\eta$-small slice $K$ of $A$, we will be in a position to apply the property (iii) and the convexifiability of $\eta$.

Le us be more precise. Fix $x \in X$ such that $\sup_{x^* \in B}x^*(x)=1$ and consider the weak$^*$-closed half space  $H=\{x^*\in X^*,\ x^*(x) \ge\xi \}$. We denote $S=H \cap B$.\\
Let now $D=\wsconv(K)$ where $K=\{x^* \in A: x^*(x) \geq \zeta \}$. Since $K\cap [\eta]'_\varepsilon(A)=\emptyset$, it follows from the weak$^*$-compactness of $K$ and Lemma \ref{referee1} that $\eta(K)< \eps$. Since $\eta$ is convexifiable, we  have  $\eta(D) < \kappa \varepsilon$.\\
Note that $B=\overline{\rm conv}^*\big((A\cap \{x\le\zeta\})\cup (A\cap \{x\ge \zeta\})\big) \subset {\rm conv}(Q\cup D)$, where $Q=\{x\le\zeta\}\cap B$. In particular, any point  $x^* \in S$ can be written  $x^*=t x^*_1 +
(1-t) x^*_2$ with $x^*_1 \in D$, $x^*_2 \in Q$ and $t \in [0,1]$. If $x^*(x) > \xi$, an elementary computation shows that $t > t_\xi=(\xi -\zeta)(1-\zeta)^{-1}$.\\
Since $B$ is bounded, there is $\beta>0$ such that $B\subset \beta B_{X^*}$. Then we choose $\xi\in (\zeta, 1)$ such that
$$2b(1-t_\xi)\beta<\kappa(\eps'-\eps),$$
where $b>0$ is the constant given by the property (iii) of the measure of non-compactness $\eta$. Note that $\xi$ depends only on $\lambda,\ b,\ \beta,\ \kappa$ and $\eps'-\eps$.\\
We have shown that $S\subset [t_\xi,1]D+[0,1-t_\xi]Q$. Note that $[t_\xi,1]D\subset D+\beta(1-t_\xi)B_{X^*}$. Therefore, $S\subset D+2\beta(1-t_\xi)B_{X^*}$. We deduce that $\eta(S)\le \eta(D)+\kappa(\eps'-\eps)<\kappa \eps'$.\\
We have proved that under the assumptions of Proposition \ref{convex}, the following holds: there exists $\xi<1$ such that $\langle \eta \rangle '_{\kappa \eps'}(B)\subset \{x^*\in B,\ x^*(x)\le\xi\}$ whenever $\sup_B x=1$. Therefore, using the Hahn-Banach theorem for the weak$^*$ topology, we deduce that  $\langle \eta\rangle'_{\kappa\varepsilon'}(B)\subset \xi B$.\\
Finally, we combine an iteration of this argument with Lemma \ref{homogeneous} to get that for any $n\in \Ndb$, $\langle \eta\rangle^n_{\kappa\varepsilon'}(B)\subset \xi^n B$. Therefore, for $n\in \Ndb$ large enough, $\langle \eta\rangle^n_{\kappa\varepsilon'}(B)\subset \beta^{-1}b^{-1}\kappa\eps B\subset b^{-1}\kappa\eps B_{X^*}$. It follows that $\eta(\langle \eta\rangle^n_{\kappa\varepsilon'}(B))<\kappa\eps'$ and finally that $\langle \eta\rangle^{n+1}_{\kappa\varepsilon'}(B)=\emptyset$.
\end{proof}

\section{Sublinear measures of non-compactness.}\label{sublinear}

Property $(iii)$ in Definition \ref{def1} provides a control of the increase of the measure of a set after adding a ball to it. Actually the Kuratowski measure of non-compactness and its $\omega$-iterates have an even better behavior for general sums of sets. We shall introduce a definition.

\begin{Def} Let $X$ be a Banach space. A measure of non-compactness $\eta$ on $X^*$ is \emph{subadditive} if for all weak$^*$-compact subsets $A$ and $B$ of $X^*$, $\eta(A+B) \leq \eta(A)+\eta(B)$.\\
The measure of non-compactness $\eta$ on $X^*$ is \emph{sublinear} if it is homogeneous and subadditive.
\end{Def}

\begin{Remark} Note that property (i) of Definition \ref{def1} implies that a sublinear measure of non-compactness is translation invariant and that the sublinearity implies property (iii) in Definition \ref{def1} with $b=\eta(B_{X^*})$.
\end{Remark}

\begin{Exa}It is easily checked that the Kuratowski measure $\sigma$ on a dual Banach space $X^*$ is sublinear.
\end{Exa}
In order to show that the $\omega$-iterates of $\sigma$ are sublinear as well, we will prove first a few elementary facts.
Consider two dual Banach spaces $X^*_1$ and $X_2^*$ and measures of non-compactness $\eta_1$ and $\eta_2$ in each of them. The product $X^*_1 \times X^*_2$ is a dual Banach space endowed with the supremum norm. Consider the set function $\eta_1 \times \eta_2$ defined on $X^*_1 \times X^*_2$ by
$$ (\eta_1 \times \eta_2)(A) = \inf\{ \varepsilon>0,\ A \subset \bigcup_{i=1}^n A^1_i \times A^2_i\ \ {\rm with}\ \eta_j(A^j_i) < \varepsilon;\ {\rm for\ all}\ j\in\{1,2\}\ {\rm and}\ i\le n\}.$$
Since $\eta_1$ and $\eta_2$ are measures of non-compactness, it follows from property (ii) in Definition \ref{def1} that we actually have
$$(\eta_1 \times \eta_2)(A) = \inf\{ \varepsilon>0,\ A \subset  A^1 \times A^2\ \ {\rm with}\ \eta_j(A^j) < \varepsilon\ {\rm for}\ j=1,2\}.$$

\begin{Lem}\label{producto}
The function $\eta_1 \times \eta_2$ defined above is a measure of non-compactness defined on
$X^*_1 \times X^*_2$ and the following properties are satisfied:

\smallskip
(1) $[ \eta_1 \times \eta_2 ]'_{\varepsilon}(A^1 \times A^2) \subset
([ \eta_1 ]'_{\varepsilon}(A^1) \times A^2) \cup
(A^1 \times [ \eta_2 ]'_{\varepsilon}(A^2)).$

\smallskip
(2) If $[ \eta_1 ]^\omega_{\varepsilon}(A^1)=\emptyset$ and
$[ \eta_2 ]^\omega_{\varepsilon}(A^2)=\emptyset$, then
$[ \eta_1 \times \eta_2 ]^\omega_{\varepsilon}(A^1 \times A^2)=\emptyset$.
\end{Lem}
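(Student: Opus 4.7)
The plan is to address the three things the lemma asks for in order: first that $\eta_1\times\eta_2$ is a measure of non compactness, then the one-step containment (1), and finally the $\omega$-step statement (2) via an induction fueled by (1).

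For the axioms (o)--(iii) of Definition \ref{def1}, I would check each one directly from the definition. Axiom (o) is trivial since $\{(x_1^*,x_2^*)\}\subset\{x_1^*\}\times\{x_2^*\}$ and each factor has $\eta_j$-measure $0$. Monotonicity (i) is immediate because any cover of a larger set covers the smaller one. For the finite union axiom (ii), witnessing covers for each $A_k$ at level $\varepsilon$ can be concatenated to a cover of $\bigcup A_k$, giving $\leq$, while the reverse inequality comes from (i). For (iii), using the supremum norm one has $B_{X_1^*\times X_2^*}=B_{X_1^*}\times B_{X_2^*}$, so a rectangular cover of $A$ yields a rectangular cover of $A+\lambda B_{X_1^*\times X_2^*}$ by enlarging each factor by $\lambda B_{X_j^*}$; applying property (iii) for $\eta_1,\eta_2$ separately and taking $b=\max(b_1,b_2)$ does the job.

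For (1), I would argue contrapositively. If $(x_1^*,x_2^*)\in A^1\times A^2$ lies outside the right-hand side, then $x_1^*\notin[\eta_1]'_\varepsilon(A^1)$ and $x_2^*\notin[\eta_2]'_\varepsilon(A^2)$, so there exist weak$^*$-open neighborhoods $U_j$ of $x_j^*$ with $\eta_j(A^j\cap\overline{U_j}^*)<\varepsilon$. Then $U_1\times U_2$ is a weak$^*$-neighborhood of $(x_1^*,x_2^*)$ in $X_1^*\times X_2^*$, its closure equals $\overline{U_1}^*\times\overline{U_2}^*$, and the single rectangular cover $(A^1\cap\overline{U_1}^*)\times(A^2\cap\overline{U_2}^*)$ shows $(\eta_1\times\eta_2)(\overline{U_1\times U_2}^*\cap(A^1\times A^2))<\varepsilon$, contradicting membership in $[\eta_1\times\eta_2]'_\varepsilon(A^1\times A^2)$.

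For (2), the plan is to establish by induction on $n\in\mathbb{N}$ the inclusion
\[
[\eta_1\times\eta_2]^n_\varepsilon(A^1\times A^2)\ \subset\ \bigcup_{k+l=n}[\eta_1]^k_\varepsilon(A^1)\times[\eta_2]^l_\varepsilon(A^2).
\]
The base case $n=0$ is trivial and $n=1$ is exactly (1). For the inductive step I would first observe that by property (ii), the fragment derivation distributes over finite unions: $[\eta]'_\varepsilon\bigl(\bigcup_i B_i\bigr)=\bigcup_i[\eta]'_\varepsilon(B_i)$ (the nontrivial inclusion uses weak$^*$-regularity to separate $x^*$ from the $B_j$ it is not in, combined with the $\eta$-small neighborhoods coming from the $B_i$'s it is in, plus axiom (ii)). Applying this to the inductive hypothesis and then (1) to each rectangular piece $[\eta_1]^k_\varepsilon(A^1)\times[\eta_2]^l_\varepsilon(A^2)$ increments either the first or the second exponent, giving the claim at level $n+1$. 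Once the displayed inclusion is proved, I finish as follows: the hypothesis $[\eta_j]^\omega_\varepsilon(A^j)=\emptyset$ is a statement about a decreasing intersection of weak$^*$-closed subsets of the weak$^*$-compact set $A^j$, so by weak$^*$-compactness there exists $N_j\in\mathbb{N}$ with $[\eta_j]^{N_j}_\varepsilon(A^j)=\emptyset$. Setting $N=N_1+N_2-1$, every pair $(k,l)$ with $k+l=N$ satisfies $k\geq N_1$ or $l\geq N_2$, and the corresponding rectangle is empty; hence $[\eta_1\times\eta_2]^N_\varepsilon(A^1\times A^2)=\emptyset$, which implies $[\eta_1\times\eta_2]^\omega_\varepsilon(A^1\times A^2)=\emptyset$.

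The only step that requires real care is the distributivity of the fragment derivation over finite unions used in the induction; the rest is bookkeeping. All of (i)--(iii) and the two final claims reduce to elementary manipulations once rectangular coverings and weak$^*$-regularity are kept in mind.
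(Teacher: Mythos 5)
Your proposal is correct and follows the same route the paper takes (verify the axioms directly via rectangular covers, prove (1) by taking product neighborhoods, and obtain (2) by iterating (1)); the inductive invariant $[\eta_1\times\eta_2]^n_\varepsilon(A^1\times A^2)\subset\bigcup_{k+l=n}[\eta_1]^k_\varepsilon(A^1)\times[\eta_2]^l_\varepsilon(A^2)$ together with the compactness step is precisely what the paper's ``follows by iteration'' means, and the distributivity of the fragment derivation over finite unions that you invoke is the same fact the paper uses in the proof of Lemma~\ref{iterate}.
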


\begin{proof}
It is elementary to check the properties from Definition \ref{def1}, as well as the first statement. The second one follows by iteration of the previous set inclusion.
\end{proof}

\begin{Lem}\label{lipschitz}
Let $T: X^*_1 \rightarrow X^*_2$ be a weak$^*$-continuous linear operator. Suppose that there exists $\lambda>0$ such that $\eta_2(T(A)) \leq \lambda \eta_1(A)$ for any  weak$^*$- compact subset $A$ of $X^*_1$. Then, the following holds:

\smallskip
(1) If $A \subset X^*_1$, then $[ \eta_2 ]'_{\lambda \varepsilon}(T(A)) \subset T([ \eta_1 ]'_\varepsilon (A))$.

\smallskip
(2) If $[ \eta_1 ]^\omega_{\varepsilon}(A)=\emptyset$, then
$[ \eta_2 ]^\omega_{\lambda \varepsilon}(T(A))=\emptyset$.
\end{Lem}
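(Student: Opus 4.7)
For part (1), I would proceed by contraposition: fix $y^* \in T(A) \setminus T([\eta_1]'_\varepsilon(A))$ and exhibit a weak$^*$-neighborhood $V$ of $y^*$ with $\eta_2(T(A) \cap \overline{V}^*) < \lambda\varepsilon$. The fibre $F := T^{-1}(y^*) \cap A$ is weak$^*$-compact (by weak$^*$-continuity of $T$) and disjoint from $[\eta_1]'_\varepsilon(A)$, so for each $x^* \in F$ the definition of the fragment derivation supplies a weak$^*$-open neighborhood $U_{x^*}$ with $\eta_1(A \cap \overline{U_{x^*}}^*) < \varepsilon$. Extracting a finite subcover $U_{x_1^*}, \dots, U_{x_n^*}$ of $F$, I set $C := A \setminus \bigcup_i U_{x_i^*}$, a weak$^*$-compact set satisfying $y^* \notin T(C)$. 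A separation argument in the Hausdorff weak$^*$-topology then yields a weak$^*$-open neighborhood $V$ of $y^*$ with $\overline{V}^* \cap T(C) = \emptyset$. Any $z^* \in T(A) \cap \overline{V}^*$ is the image of some $a^* \in A \setminus C \subset \bigcup_{i=1}^n \overline{U_{x_i^*}}^*$, so monotonicity together with property (ii) and the hypothesis on $T$ yields
\[
\eta_2(T(A) \cap \overline{V}^*) \le \max_{1 \le i \le n} \eta_2\bigl(T(A \cap \overline{U_{x_i^*}}^*)\bigr) \le \lambda \max_i \eta_1(A \cap \overline{U_{x_i^*}}^*) < \lambda\varepsilon,
\]
which proves $y^* \notin [\eta_2]'_{\lambda\varepsilon}(T(A))$.

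For part (2), I would prove by induction on $n \in \Ndb$ that $[\eta_2]^n_{\lambda\varepsilon}(T(A)) \subset T([\eta_1]^n_\varepsilon(A))$. The successor step combines monotonicity of the fragment derivation applied to the inductive inclusion with part (1) applied to the weak$^*$-compact set $A' := [\eta_1]^n_\varepsilon(A)$. Assuming $[\eta_1]^\omega_\varepsilon(A) = \emptyset$, the decreasing chain of weak$^*$-compact sets $\bigl([\eta_1]^n_\varepsilon(A)\bigr)_{n \in \Ndb}$ has empty intersection, so by the finite intersection property some $[\eta_1]^{n_0}_\varepsilon(A)$ is already empty; the inductive inclusion then forces $[\eta_2]^{n_0}_{\lambda\varepsilon}(T(A)) = \emptyset$, and a fortiori $[\eta_2]^\omega_{\lambda\varepsilon}(T(A)) = \emptyset$.

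The main technical obstacle is the construction of the neighborhood $V$ in part (1): one must pass from the mere fact $y^* \notin T(C)$ to a weak$^*$-neighborhood of $y^*$ whose weak$^*$-closure still avoids the weak$^*$-compact set $T(C)$. This is a routine separation argument (take $V$ to be a finite intersection of open half-spaces separating $y^*$ from $T(C)$ with some slack), but it is the only place where weak$^*$-continuity of $T$ is genuinely used beyond the initial observation that $T(A)$ is weak$^*$-compact.
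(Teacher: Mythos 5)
Your proof is correct and follows essentially the same strategy as the paper's (cover the fibre over $y^*$ by finitely many $\eta_1$-small sets, remove them from $A$, and build the neighborhood $V$ from the image of the remainder; then iterate for part (2)). In fact your version is slightly more careful than the paper's on two points: the paper asserts that $T^{-1}(y^*)$ is weak$^*$-compact, which fails in general (it is a closed affine subspace), whereas you correctly work with $T^{-1}(y^*)\cap A$; and the paper takes $V=X_2^*\setminus T(A\setminus\bigcup U_{x_i^*})$ and then quotes the estimate for $T(A)\cap\overline{V}^*$, although its argument only directly controls $T(A)\cap V$, whereas you explicitly invoke regularity of the weak$^*$ topology to shrink $V$ so that $\overline{V}^*$ is disjoint from the offending compact set, which is exactly the missing step.
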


\begin{proof}
We will only prove the first statement which clearly implies the second. So, let $y^* \in T(A) \setminus T([ \eta_1 ]'_\varepsilon (A))$ and fix $x^*\in A$ such that $T(x^*)=y^*$. Since $T$ is weak$^*$-continuous, $T([ \eta_1 ]'_\varepsilon (A))$ is weak$^*$-compact and there exists a weak$^*$-neighborhood $U$ of $y^*$ such that $\overline{U}^*$ is disjoint from $T([ \eta_1 ]'_\varepsilon (A))$. Thus $V=T^{-1}(\overline{U}^*)$ is weak$^*$-closed and is a weak$^*$-neighborhood of $x^*$ disjoint from $[ \eta_1 ]'_\varepsilon (A)$. Then Lemma \ref{referee1} insures that $\eta_1(A\cap V)<\eps$. It follows that $\eta_2(T(A)\cap \overline{U}^*)=\eta_2(T(A\cap V))<\lambda \eps$. Therefore, $y^*\notin [ \eta_2 ]'_{\lambda \varepsilon}(T(A))$. This concludes our proof.
\end{proof}

\begin{Prop}\label{additive}
Let $\eta$ be a sublinear measure of non-compactness defined on a dual Banach space $X^*$.
Then $\eta^\omega$ is also a sublinear measure of non-compactness.
\end{Prop}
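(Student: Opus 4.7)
My plan is to handle homogeneity and subadditivity separately. Homogeneity is straightforward: for $\lambda\neq 0$, multiplication by $\lambda$ is a weak$^*$-homeomorphism of $X^*$ that sends weak$^*$-neighborhoods to weak$^*$-neighborhoods and, by the homogeneity of $\eta$, scales the $\eta$-measure by $|\lambda|$. A direct induction on the ordinal then yields $[\eta]^\gamma_\varepsilon(\lambda A)=\lambda\,[\eta]^\gamma_{\varepsilon/|\lambda|}(A)$; specialising to $\gamma=\omega$ gives $\eta^\omega(\lambda A)=|\lambda|\,\eta^\omega(A)$.

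For subadditivity my strategy is to lift the estimate to the product space $X^*\times X^*$ and apply Lemmas~\ref{producto} and \ref{lipschitz} to the addition map $T\colon X^*\times X^*\to X^*$, $T(x^*,y^*)=x^*+y^*$, which is weak$^*$-continuous and linear. Since the crude product $\eta\times\eta$ only produces the estimate $\eta(T(C))\leq 2(\eta\times\eta)(C)$, I would introduce the finer variant
\[
\eta_+(C)=\inf\Big\{r>0:\ C\subset\bigcup_{i=1}^n A_i^1\times A_i^2\ \text{with}\ \eta(A_i^1)+\eta(A_i^2)<r\ \text{for each}\ i\Big\},
\]
verify that $\eta_+$ satisfies the axioms of Definition~\ref{def1} (with constant $2b$ in~(iii)), and observe that the sublinearity of $\eta$ together with axiom (ii) gives $\eta(T(C))\leq\eta_+(C)$. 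Then Lemma~\ref{lipschitz} applied with $\lambda=1$ yields $\eta^\omega(A+B)\leq\eta_+^\omega(A\times B)$ for all weak$^*$-compact $A,B\subset X^*$.

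It remains to prove $\eta_+^\omega(A\times B)\leq\eta^\omega(A)+\eta^\omega(B)$. In analogy with Lemma~\ref{producto}(1), but with two distinct thresholds, I would first establish
\[
[\eta_+]'_{s+t}(A^1\times A^2)\subset\big([\eta]'_s(A^1)\times A^2\big)\cup\big(A^1\times[\eta]'_t(A^2)\big)
\]
by a direct separation argument: if $(x^*,y^*)$ avoids both pieces on the right, then weak$^*$-open $U\ni x^*$ and $V\ni y^*$ with $\eta(A^1\cap\overline U^*)<s$ and $\eta(A^2\cap\overline V^*)<t$ provide a one-box cover of the neighbourhood $U\times V$ of $\eta_+$-measure strictly less than $s+t$. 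Iterating this inclusion and combining it with Lemma~\ref{union} applied to $\eta_+$, I would then prove by induction on $n$ that
\[
[\eta_+]^n_{s+t}(A\times B)\subset\bigcup_{k+l=n}[\eta]^k_s(A)\times[\eta]^l_t(B).
\]
Choosing $s>\eta^\omega(A)$ and $t>\eta^\omega(B)$, any $n_1,n_2$ with $[\eta]^{n_1}_s(A)=[\eta]^{n_2}_t(B)=\emptyset$ render the right-hand side empty at $n=n_1+n_2$, so $\eta_+^\omega(A\times B)\leq s+t$; taking infima concludes the proof.

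The step I expect to be the main obstacle is the inductive passage of the derivation $[\eta_+]'_{s+t}$ through the finite union $\bigcup_{k+l=n}[\eta]^k_s(A)\times[\eta]^l_t(B)$. The key observation that lets the induction close is that, by the nesting of the derived sets, any pairwise intersection of two distinct terms of this union already lies in some $[\eta]^{k'}_s(A)\times[\eta]^{l'}_t(B)$ with $k'+l'\geq n+1$, so iterating Lemma~\ref{union} over the finitely many pieces absorbs the cross-terms and leaves only the desired union at level $n+1$.
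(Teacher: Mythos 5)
Your proof is correct, but it takes a genuinely different route from the paper at the crucial subadditivity step. The paper's trick is to normalize \emph{before} forming the product: writing $A'=\varepsilon_1^{-1}A$ and $B'=\varepsilon_2^{-1}B$ so that both have $\eta^\omega$-measure less than $1$, taking the product $A'\times B'$ with the original max-type product measure $\eta\times\eta$, and then applying Lemma~\ref{lipschitz} to the \emph{weighted} addition map $T(x^*,y^*)=\varepsilon_1 x^*+\varepsilon_2 y^*$, for which sublinearity gives exactly $\eta(T(C))\le(\varepsilon_1+\varepsilon_2)(\eta\times\eta)(C)$. This recovers $A+B=T(A'\times B')$ and the desired bound $\eta^\omega(A+B)<\varepsilon_1+\varepsilon_2$ with no new lemmas. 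You instead work with the unweighted addition map and compensate by introducing the finer, additive product measure $\eta_+$; this forces you to redo the analogue of Lemma~\ref{producto}(1) for two distinct thresholds and then run a careful inductive bookkeeping showing
$[\eta_+]^n_{s+t}(A\times B)\subset\bigcup_{k+l=n}[\eta]^k_s(A)\times[\eta]^l_t(B)$,
with the cross-terms in the union handled by the observation that $P_i\cap P_j$ has ``level'' $\ge n+1$ and hence lands on the right side of the inclusion at the next step. This all goes through (the single-box estimate for $\eta_+$ of a slice $\big(\overline U^*\cap[\eta]^{k_0}_s(A)\big)\times\big(\overline V^*\cap[\eta]^{l_0}_t(B)\big)$, the use of Lemma~\ref{union} for finite unions, and the application of Lemma~\ref{lipschitz} with $\lambda=1$ to pass from $\eta_+^\omega(A\times B)$ to $\eta^\omega(A+B)$), but it is substantially more machinery than the paper needs. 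What your approach buys is a more explicit description of how the fragment derivation behaves on products when the two coordinates are allowed different thresholds, which is of independent interest; what the paper's approach buys is brevity, since the rescaling eliminates the two-threshold issue entirely and makes the existing Lemmas~\ref{producto} and \ref{lipschitz} apply verbatim.
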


\begin{proof} By applying Lemma \ref{lipschitz} to the operator $T$ defined by $T(x^*)=\lambda x^*$ for $x^*\in X^*$, we deduce immediately that $\eta^\omega$ is homogeneous.\\
Let now $A, B \subset X^*$ be weak$^*$-compact and $\varepsilon_1, \varepsilon_2 >0$ such that $\eta^\omega (A) < \varepsilon_1$ and $\eta^\omega (B) < \varepsilon_2$. Let $A' = \varepsilon^{-1}_1 A$ and $B' = \varepsilon^{-1}_2 B$. Since $\eta^\omega$ is homogeneous, we have that
$ [ \eta ]^\omega_{1}(A') =\emptyset$ and
$ [ \eta ]^\omega_{1}(B') =\emptyset$. It follows from Lemma \ref{producto} that $[ \eta \times \eta ]^\omega_{1}(A' \times B')=\emptyset$. Consider now the operator $T: X^* \times X^* \rightarrow X^*$ defined by $T(x^*,y^*)=\varepsilon_1 x^* + \varepsilon_2 y^*$. Since $\eta$ is sublinear, we may easily deduce that $\eta(T(C)) \leq (\varepsilon_1 + \varepsilon_2) (\eta \times \eta)(C)$ for any weak$^*$-compact subset $C$ of $X^* \times X^*$. In particular, we can apply Lemma \ref{lipschitz} to get that $[ \eta ]^\omega_{\varepsilon_1+\varepsilon_2}(A+B)= [ \eta ]^\omega_{\varepsilon_1+\varepsilon_2}(T(A' \times B')) =\emptyset$, that is, $\eta^{\omega}(A+B) \le \varepsilon_1 +\varepsilon_2$. Since $\varepsilon_1$ and $\varepsilon_2$ were arbitrary, we have proved the subadditivity of $\eta^\omega$.
\end{proof}

As an immediate consequence we have.

\begin{Cor}\label{KuraSub} Let $X$ be a Banach space and denote by $\sigma$ the Kuratowski measure of non-compactness on $X^*$. Then, for any $n\in \Ndb$, $\sigma^{\omega^n}$ is  a sublinear measure of non-compactness on $X^*$.
\end{Cor}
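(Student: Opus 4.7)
The plan is a straightforward induction on $n \in \mathbb{N}$, with Proposition \ref{additive} doing all of the real work. There is no new analytic content to produce; the statement is packaged as a corollary precisely because every ingredient has already been assembled.

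For the base case $n = 1$, I would invoke the example immediately preceding, which observes that the Kuratowski measure $\sigma$ on $X^*$ is sublinear (homogeneity is obvious from the definition in terms of coverings by balls of diameter $\eps$, and subadditivity follows since a cover of $A$ by balls of diameter $\eps_1$ and a cover of $B$ by balls of diameter $\eps_2$ give, after summing centers, a cover of $A+B$ by balls of diameter $\eps_1+\eps_2$). Applying Proposition \ref{additive} to $\eta := \sigma$ yields that $\sigma^{\omega} = \sigma^{\omega^{1}}$ is a sublinear measure of non compactness on $X^*$.

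For the inductive step, suppose $\sigma^{\omega^n}$ is a sublinear measure of non compactness on $X^*$. By the definition of the iterated measure, $\sigma^{\omega^{n+1}} := (\sigma^{\omega^n})^\omega$. Since the hypotheses of Proposition \ref{additive} are met by $\eta := \sigma^{\omega^n}$, we conclude that $\sigma^{\omega^{n+1}}$ is sublinear, which closes the induction.

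There is no genuine obstacle here: the only conceptual content has been absorbed into Proposition \ref{additive} (whose proof uses the product-measure construction of Lemma \ref{producto} and the transport Lemma \ref{lipschitz} applied to the sum map $T(x^*,y^*)=\eps_1 x^*+\eps_2 y^*$). The corollary is, as stated, an immediate consequence.
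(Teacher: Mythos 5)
Your proof is correct and matches the paper's intent exactly: the corollary is stated after Proposition \ref{additive} with the words ``as an immediate consequence,'' and the intended argument is precisely the induction you spell out, seeded by the observation (made in the Example preceding Proposition \ref{additive}) that $\sigma$ itself is sublinear. Nothing further is required.
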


Let us recall the notation that we use for the slice derivation
$$ \langle \eta\rangle '_{\varepsilon}(A)=\{ x^* \in A,\ {\rm for\ any}\ {\rm weak^*-open\ halfspace}\ H\ {\rm containing}\ x^*,\   \eta(A \cap \overline{H}^*) \ge\varepsilon\}.$$
Along the remaining part of this section we shall only deal with weak$^*$-compact convex sets, since the effect of the slice derivation on convex sets produces convex sets again. Also, \emph{ we will assume that the measure of non-compactness $\eta$ is sublinear}. Thus we will be able to keep better constants in the formulas.

\begin{Lem}\label{fat} Let $\eta$ be a sublinear measure of non-compactness defined on a dual Banach space $X^*$. If $A$ and $C$ are  weak$^*$-compact and convex then
$$ \langle \eta \rangle'_{\varepsilon + \eta(C)} (A + C) \subset
\langle \eta \rangle'_{\varepsilon} (A) + C $$
\end{Lem}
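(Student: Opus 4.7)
My plan is to proceed by contraposition: starting from a point
\[
z^* \in (A+C)\setminus \bigl(\langle\eta\rangle'_\varepsilon(A) + C\bigr),
\]
I will construct a weak$^*$-closed halfspace $H$ containing $z^*$ for which $\eta((A+C)\cap H) < \varepsilon + \eta(C)$, which then shows $z^* \notin \langle\eta\rangle'_{\varepsilon+\eta(C)}(A+C)$.

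The first step is a Hahn-Banach separation. The set $\langle\eta\rangle'_\varepsilon(A) + C$ is convex and weak$^*$-compact, being the image of the product of two weak$^*$-compact convex sets under the continuous map $(u^*,v^*)\mapsto u^*+v^*$. Hence I obtain $x \in X$ and $\alpha \in \mathbb{R}$ with $z^*(x) \geq \alpha$ while $(a'^* + c^*)(x) < \alpha$ for every $a'^* \in \langle\eta\rangle'_\varepsilon(A)$ and every $c^* \in C$. Setting $\beta := \max_{c^* \in C} c^*(x)$ (attained by weak$^*$-compactness of $C$), this forces $a'^*(x) < \alpha - \beta$ for all $a'^* \in \langle\eta\rangle'_\varepsilon(A)$, so $\langle\eta\rangle'_\varepsilon(A)$ is disjoint from $H_A := \{x^*\in X^* : x^*(x) \geq \alpha - \beta\}$.

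The next step is to bound $\eta(A \cap H_A)$ by $\varepsilon$. By Lemma~\ref{convexhull} applied to the convex weak$^*$-compact $A$, one has $[\eta]'_\varepsilon(A) \subset \langle\eta\rangle'_\varepsilon(A)$, so $A \cap H_A$ is also disjoint from $[\eta]'_\varepsilon(A)$. Therefore every $y^* \in A \cap H_A$ admits a weak$^*$-open neighborhood $U_{y^*}$ with $\eta(A \cap \overline{U_{y^*}}^*) < \varepsilon$; by weak$^*$-compactness of $A \cap H_A$ I extract a finite subcover and conclude via property (ii) of Definition~\ref{def1} that $\eta(A \cap H_A) < \varepsilon$. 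This is exactly the covering trick already used in the proof of Lemma~\ref{convexhull}.

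Finally take $H := \{x^* : x^*(x) \geq \alpha\}$. It contains $z^*$, and any $a^* + c^* \in (A+C)\cap H$ (with $a^* \in A$, $c^* \in C$) satisfies $a^*(x) \geq \alpha - c^*(x) \geq \alpha - \beta$, so $a^* \in A \cap H_A$; consequently $(A+C) \cap H \subset (A \cap H_A) + C$. Subadditivity of $\eta$ then gives
\[
\eta\bigl((A+C)\cap H\bigr) \leq \eta(A \cap H_A) + \eta(C) < \varepsilon + \eta(C),
\]
which completes the argument. I expect the only subtle point to be passing from the halfspace description of $\langle\eta\rangle'_\varepsilon(A)$ (where $y^*$ can be on the boundary of the separating halfspace) to a genuine weak$^*$-open neighborhood cover of $A \cap H_A$ suitable for invoking property~(ii); Lemma~\ref{convexhull} bypasses this by giving fragment-derivation neighborhoods for free.
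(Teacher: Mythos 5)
Your proof is correct and follows essentially the same route as the paper's: Hahn--Banach separation of $z^*$ from the convex weak$^*$-compact set $\langle\eta\rangle'_\varepsilon(A)+C$, passage to a translated halfspace cutting only $A$, the standard compactness cover to bound $\eta$ of that slice of $A$, and then the inclusion $(A+C)\cap H\subset (A\cap H_A)+C$ together with subadditivity. The paper phrases the translation via $\inf_{u^*\in C}u^*(x)$ and works with $\le$-halfspaces, while you use $\max_{c^*\in C}c^*(x)$ and $\ge$-halfspaces, but these are the same computation up to sign.
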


\begin{proof} Let $x^*\in (A+C)\setminus \langle \eta \rangle'_{\varepsilon} (A) + C $. Then there exist $x\in X$ and $a\in \Rdb$ such that $x^*(x)<a$ and $S\cap \big(\langle \eta \rangle'_{\varepsilon} (A) + C\big)=\emptyset$, where $S=\{y^*\in X^*,\ y^*(x)\le a\}$. Consider now
$$T=\{y^*\in X^*,\ y^*(x)\le a-\inf_{u^*\in C}u^*(x)\}.$$
Then $T\cap \langle \eta \rangle'_{\varepsilon} (A)=\emptyset$ and Lemma \ref{referee1} yields that $\eta(T\cap A)<\eps$.\\
Finally, since $S \cap (A+C)\subset (T\cap A)+C$, we get that $\eta(S \cap (A+C))<\eps+\eta(C)$ and that $x^*\notin \langle \eta \rangle'_{\varepsilon + \eta(C)} (A + C)$, which concludes our proof.

\end{proof}

We will need a modification of the slice derivation. Given a convex
weak$^*$-compact set $D$ consider
$$ \langle \eta\rangle '_{\varepsilon}(A|_D)=\{ x^* \in A, \forall H  \ {\rm weak}^*{\rm -open\ halfspace},\ x^* \in H,\   \overline{H}^* \cap D =\emptyset \Rightarrow  \eta(A \cap \overline{H}^*) \ge\varepsilon\}.$$

\begin{Lem}\label{convexy} Let $\eta$ be a sublinear measure of non-compactness defined on a dual Banach space $X^*$. Suppose that $C$ and $D$ are convex weak$^*$-compact subsets of $X^*$ so that $\eta(D) \leq 1$ and $\eta(C) \leq \varepsilon$ with  $\varepsilon \in (0,1)$. For $\delta \in (0,1)$, the sequence $(A_k)_{k=1}^\infty$ defined recursively by $A_1= \overline{conv}^*(C \cup D)$ and
$A_{k+1}= \overline{\mbox{conv}}^*((C \cap \langle \eta \rangle '_{\varepsilon+\delta}(A_{k}|_D)) \cup D)$ satisfies
$$\forall k\ge 2\ \  \sup\{f,A_{k}\} - \sup\{f,D\} \leq  (1-\delta/2)^{k-1}(\sup\{f,A_1\}-\sup\{f,D\}) $$
for every functional $f \in X$ such that $\sup\{f,D\} \leq \sup\{f,C\}$.
\end{Lem}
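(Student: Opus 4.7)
Set $\alpha := \sup\{f,D\}$ and $h_k := \sup\{f,A_k\} - \alpha$. The statement is equivalent to showing $h_{k+1} \leq (1-\delta/2)\,h_k$ for every $k \geq 1$ (with the convention $C_0 := C$), from which induction on $k$ yields $h_k \leq (1-\delta/2)^{k-1} h_1$. Since each $C_{k-1}$ is a subset of the weak$^*$-compact convex set $C$ and $D$ is weak$^*$-compact convex, the convex hull of $\overline{C_{k-1}}^* \cup D$ is already weak$^*$-compact, so $A_k$ coincides with $\{tc' + (1-t)d' : c' \in \overline{C_{k-1}}^{*},\ d' \in D,\ t \in [0,1]\}$. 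The proof is built on the algebraic identity
\[
tc' + (1-t)d' = \begin{cases} (1-\tfrac{\delta}{2})\,c' + \tfrac{\delta}{2}\bigl[\lambda c' + (1-\lambda)d'\bigr], & t \geq 1 - \tfrac{\delta}{2}, \\ \tfrac{\delta}{2}\,d' + (1-\tfrac{\delta}{2})\bigl[\mu c' + (1-\mu)d'\bigr], & t \leq 1 - \tfrac{\delta}{2}, \end{cases}
\]
where $\lambda,\mu \in [0,1]$ are chosen appropriately; the bracketed expression lies in $A_k$ by convexity.

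First I would establish the uniform bound $\eta(A_k) \leq 1$ for every $k$. Applying the identity to each $y^* \in A_k$ by splitting on $t \geq 1-\delta/2$ versus $t \leq 1-\delta/2$ yields
\[ A_k \subset \bigl[(1-\tfrac{\delta}{2})\,C + \tfrac{\delta}{2}\,A_k\bigr] \cup \bigl[\tfrac{\delta}{2}\,D + (1-\tfrac{\delta}{2})\,A_k\bigr], \]
and property (ii) combined with sublinearity applied to each Minkowski sum gives
\[ \eta(A_k) \leq \max\bigl\{(1-\tfrac{\delta}{2})\varepsilon + \tfrac{\delta}{2}\eta(A_k),\ \tfrac{\delta}{2} + (1-\tfrac{\delta}{2})\eta(A_k)\bigr\}. \]
Resolving this self-referential inequality case by case produces $\eta(A_k) \leq \varepsilon$ or $\eta(A_k) \leq 1$, and hence $\eta(A_k) \leq 1$ in all cases.

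For the main inductive step, assume $h_k > 0$ (otherwise trivial, as $A_{k+1} \subset A_k$) and suppose for contradiction that some $c \in C_k := C \cap \langle\eta\rangle'_{\varepsilon+\delta}(A_k|_D)$ satisfies $f(c) > \alpha + (1-\delta/2)h_k$. The weak$^*$-closed halfspace $H := \{y^* \in X^{*} : f(y^*) \geq \alpha + (1-\delta/2)h_k\}$ contains $c$ and is disjoint from $D$, so by definition of the restricted slice derivation $\eta(A_k \cap H) \geq \varepsilon + \delta$. On the other hand, every $y^* = tc' + (1-t)d' \in A_k \cap H$ satisfies $f(d') \leq \alpha$ and $f(c') \leq \sup\{f,A_k\} = \alpha + h_k$ (the latter because $\overline{C_{k-1}}^* \subset A_k$), so $tf(c') + (1-t)f(d') \geq \alpha + (1-\delta/2)h_k$ forces $t \geq 1 - \delta/2$; the first case of the identity then places $y^* \in (1-\delta/2)\,C + (\delta/2)\,A_k$. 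Sublinearity and the uniform bound yield
\[ \eta(A_k \cap H) \leq (1-\tfrac{\delta}{2})\varepsilon + \tfrac{\delta}{2} = \varepsilon + \tfrac{\delta}{2}(1-\varepsilon) < \varepsilon + \delta, \]
the required contradiction. Hence $\sup\{f,C_k\} \leq \alpha + (1-\delta/2)h_k$, and consequently $\sup\{f,A_{k+1}\} = \max(\sup\{f,C_k\},\alpha) \leq \alpha + (1-\delta/2)h_k$, completing the induction. The main obstacle is the auxiliary bound $\eta(A_k) \leq 1$: without it the $(\delta/2)\eta(A_k)$ term in the slice estimate is uncontrolled, and it is precisely the splitting identity that both supplies this bound and sets up the slice argument.
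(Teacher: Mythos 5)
Your proof is correct and follows essentially the same geometric idea as the paper: in a slice of $A_k$ disjoint from $D$ (and lying far from $D$), every point must put weight at least $1-\delta/2$ on the $C$-component, and the resulting set $(1-\delta/2)C + (\delta/2)A_k$ has $\eta$-measure strictly below $\varepsilon+\delta$ by sublinearity. The paper packages this through the auxiliary set $E$ and the observation $\langle\eta\rangle'_{\varepsilon+\delta}(A_1|_D)\subset E$; you package it as a direct contradiction on a slice. These are the same argument.

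Where your write-up genuinely adds something is the auxiliary bound $\eta(A_k)\le 1$, proved via the self-referential inclusion $A_k\subset\bigl[(1-\tfrac{\delta}{2})C+\tfrac{\delta}{2}A_k\bigr]\cup\bigl[\tfrac{\delta}{2}D+(1-\tfrac{\delta}{2})A_k\bigr]$. The paper's estimate $\eta(\overline{A_1\setminus E}^*)\le\eta(C)+2\lambda\le\varepsilon+\delta$ treats $\lambda$ both as a quantifier over $[0,\delta/2]$ and as a fixed scalar, so as written it silently passes from a bound uniform over a continuum of sets $C+\lambda(D-C)$ to a bound on their union; to make this rigorous one either subdivides $[0,\delta/2]$ into finitely many pieces and uses property (iii), or does exactly what you do, namely absorb the varying $\lambda$ into $(\delta/2)A_1$ and control $\eta(A_1)$. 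You also make the passage from the single step $k=1\to 2$ to general $k$ explicit rather than invoking ``iteration,'' which is safer because the recursion always intersects with the original $C$ rather than with the previous $C_{k-1}$; your observation that $\overline{C_{k-1}}^*\subset C$ and $\overline{C_{k-1}}^*\subset A_k$ is exactly what makes the induction go through cleanly. One small point worth stating explicitly: the restricted slice derivation of a convex set is convex (if a closed halfspace avoids $D$ and contains the midpoint of $x^*,y^*$, it contains one of them), so $C_{k-1}$ is convex and $\text{conv}(\overline{C_{k-1}}^*\cup D)$ is already weak$^*$-compact, as you use.
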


\begin{proof}
With small modifications, it is essentially done in \cite{Raja2010}.
It is enough to prove the inequality for the first step
$$ \sup\{f,A_2\} -  \sup\{f,D\}  \leq  \big(1-\frac{\delta}{2}\big)(\sup\{f,A_1\}- \sup\{f,D\} ).$$
Consider
$$ E=\{ (1-\lambda)y^* +\lambda z^*: y^* \in C, z^* \in D, \lambda \in \big[\frac{\delta}{2},1\big] \} $$
Note that $E$ contains $D$ and is weak$^*$-closed and convex.
If $x^* \in A_1 \setminus E$ then $x^*=(1-\lambda)y^* +\lambda z^*$ with $y^* \in C$, $z^* \in D$ and
$\lambda \in [0,\frac{\delta}{2}]$.
Since $x^*-y^*=\lambda(z^*-y^*)$, we have
$$A_1 \setminus E\ \subset\ C + \bigcup_{\lambda \in [0,\frac{\delta}{2}]} \lambda(D-C).$$
Using the compactness of $[0,\frac{\delta}{2}]$, it follows that for every $\nu>0$, there exists a finite subset $F$ of $[0,\frac{\delta}{2}]$ such that
$$A_1 \setminus E\ \subset\ C + \bigcup_{\lambda \in F} \big(\lambda(D-C)+\nu B_{X^*}\big).$$
The set on the right hand side of the above inclusion is weak$^*$-closed, so we deduce from the properties of $\eta$ that for all $\nu>0$,
$$\eta(\overline{A_1 \setminus E}^*) \leq \eta(C) + \delta+ \nu \eta(B_{X^*})$$
and therefore that $\eta(\overline{A_1 \setminus E}^*) \leq \eps+\delta$.\\
This implies that any weak$^*$-closed slice of $A_1$ disjoint from $E$ has $\eta$-measure less than
$\varepsilon+\delta$. Therefore we have $\langle \eta \rangle'_{\varepsilon+\delta}(A_1|_D) \subset E$ and thus
$\sup\{f,A_2\}  \leq \sup\{f,E\}$.
Moreover, we have
$$\sup\{f,E\} -  \sup\{f,D\}  \leq \big(1-\frac{\delta}{2}\big)\sup\{f,C\} + \frac{\delta}{2} \sup\{f,D\} -  \sup\{f,D\}  $$
$$ =  \big(1-\frac{\delta}{2}\big)\sup\{f,C\} + \big(\frac{\delta}{2}-1\big) \sup\{f,D\}  = \big(1-\frac{\delta}{2}\big)(\sup\{f,C\} -  \sup\{f,D\} ),$$
which concludes our proof.
\end{proof}

\begin{Lem} \label{recursion} Let $\eta$ be a sublinear measure of non-compactness defined on a dual Banach space $X^*$.
For every $\varepsilon, \delta>0$, every convex weak$^*$-compact subset $A$ of $X^*$ and every weak$^*$-open halfspace $H$ we have
\[
 \langle \eta \rangle_\varepsilon^\omega(\overline{H}^* \cap A)= \emptyset \Rightarrow \langle \eta \rangle_{\varepsilon+\delta}^\omega(A) \subset {A\setminus H}.
\]

\end{Lem}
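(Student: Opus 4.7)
The plan is to argue by contradiction. Assume $\langle\eta\rangle_\varepsilon^\omega(H\cap A)=\emptyset$ yet some $y^*\in\langle\eta\rangle_{\varepsilon+\delta}^\omega(A)\setminus\overline{A\setminus H}^*$ exists, and derive a contradiction. Since $y^*\in A\setminus\overline{A\setminus H}^*$ and $A\setminus H\subset\overline{A\setminus H}^*$, one has $y^*\in H\cap A$. Writing $H=\{g\leq c\}$, in the generic case $g(y^*)<c$ we choose $c'\in(g(y^*),c)$ and set $H_+:=\{g\leq c'\}$, $B:=H_+\cap A$; then $y^*\in H_+$ and $B\subset H\cap A$. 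By the obvious monotonicity of the slice derivation in the underlying set, $\langle\eta\rangle_\varepsilon^\omega(B)\subset\langle\eta\rangle_\varepsilon^\omega(H\cap A)=\emptyset$, so we pick $n_0\in\N$ with $\langle\eta\rangle_\varepsilon^{n_0}(B)=\emptyset$. The boundary case $g(y^*)=c$ is handled by perturbing $g$ using the weak$^*$-open neighborhood of $y^*$ disjoint from $\overline{A\setminus H}^*$.

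The heart of the argument is to establish by induction on $n\in\N$ the containment
\[\langle\eta\rangle_{\varepsilon+\delta}^n(A)\cap H_+\ \subset\ \langle\eta\rangle_\varepsilon^n(B),\]
which at $n=n_0$ yields $y^*\in\emptyset$, the desired contradiction. The base case $n=0$ is the identity $A\cap H_+=B$. For the inductive step, take $z^*\in\langle\eta\rangle_{\varepsilon+\delta}^{n+1}(A)\cap H_+$: the inductive hypothesis places $z^*$ in $\langle\eta\rangle_\varepsilon^n(B)$, so we must verify that every weak$^*$-closed halfspace $H'=\{f\geq a\}$ through $z^*$ satisfies $\eta(H'\cap\langle\eta\rangle_\varepsilon^n(B))\geq\varepsilon$.

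For this we introduce the tilted halfspace $\widetilde H_t:=\{f-tg\geq a-tc'\}$ for $t\geq 0$. Three properties of $\widetilde H_t$ are used: (i) $z^*\in\widetilde H_t$ for every $t\geq 0$, directly from $g(z^*)\leq c'$ and $f(z^*)\geq a$; (ii) for every $t\geq 0$, $\widetilde H_t\cap H_+\subset H'$, since $g(y)\leq c'$ together with the tilt inequality give $f(y)\geq a+t(g(y)-c')\geq a$; (iii) for any $\rho>0$, boundedness of $f$ on $A$ and weak$^*$-compactness of $A$ allow one to pick $t$ large enough that $\widetilde H_t\cap A\subset B\cup S_\rho$, where $S_\rho:=A\cap\{c'<g<c'+\rho\}$ is a thin $g$-slab. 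The hypothesis $z^*\in\langle\eta\rangle_{\varepsilon+\delta}^{n+1}(A)$ applied to $\widetilde H_t$ yields $\eta(\widetilde H_t\cap\langle\eta\rangle_{\varepsilon+\delta}^n(A))\geq\varepsilon+\delta$; combining this with the inductive hypothesis on the $H_+$-part (which by (ii) is contained in $H'\cap\langle\eta\rangle_\varepsilon^n(B)$) and with an estimate $\eta(S_\rho)<\delta/2$ for $\rho$ small enough — coming from the sublinearity of $\eta$ (Proposition~\ref{additive}) and property~(iii) of Definition~\ref{def1} — the union property~(ii) of $\eta$ gives
\[\varepsilon+\delta\ \leq\ \max\bigl(\eta(H'\cap\langle\eta\rangle_\varepsilon^n(B)),\,\delta/2\bigr),\]
forcing $\eta(H'\cap\langle\eta\rangle_\varepsilon^n(B))\geq\varepsilon$ and completing the induction.

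The most delicate step is the slab estimate $\eta(S_\rho)<\delta/2$, which is where the sublinearity of $\eta$ together with property~(iii) enter essentially (and where the extra $\delta$-budget in the hypothesis on $A$ pays off); the rest of the argument is routine monotonicity of the slice derivation, Hahn--Banach-style separation for the initial step, and bookkeeping of the inclusions through the induction.
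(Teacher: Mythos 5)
There is a genuine gap; in fact the central step fails for two independent reasons, and the inductive claim it is meant to establish is almost certainly false.

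First, property~(ii) of your tilted halfspace is a sign error. With $\widetilde H_t=\{f-tg\geq a-tc'\}$ and $y\in H_+\cap \widetilde H_t$, you get $f(y)\geq a+t(g(y)-c')$; but $g(y)\leq c'$ and $t\geq 0$ make $t(g(y)-c')\leq 0$, so this lower bound is $\leq a$ and does \emph{not} give $f(y)\geq a$. Reversing the tilt to $\{f+tg\geq a+tc'\}$ makes (ii) true, but then (i) fails: $z^*\in\widetilde H_t$ requires $f(z^*)-a\geq t(c'-g(z^*))$, which is impossible for large $t$ whenever $g(z^*)<c'$ strictly (the generic case) and $f(z^*)=a$. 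In short, no single weak$^*$-closed halfspace through a point $z^*$ strictly inside $H_+$ can simultaneously satisfy (ii) and cut a thin $g$-slab outside $H_+$. Second, the slab estimate $\eta(S_\rho)<\delta/2$ is unjustified and in general false: as $\rho\to 0$, $S_\rho=A\cap\{c'<g<c'+\rho\}$ shrinks (in the $g$-direction) towards the cross-section $A\cap\{g=c'\}$, whose $\eta$-measure need not be small, and neither property~(iii) of Definition~\ref{def1} nor sublinearity (Proposition~\ref{additive}) can lower $\eta(S_\rho)$ below $\eta$ of that cross-section minus $O(\rho)$. With $\eta(S_\rho)$ possibly exceeding $\varepsilon+\delta$, the final application of the union property~(ii) yields nothing.

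More fundamentally, your inductive claim $\langle\eta\rangle_{\varepsilon+\delta}^n(A)\cap H_+\subset\langle\eta\rangle_\varepsilon^n(B)$ is a clean one-step ``locality'' statement for the slice derivation, analogous to Lemma~\ref{union} for the fragment derivation. But the slice derivation is inherently global (slices of $A$ through a point of $H_+$ reach across all of $A$, and their $\eta$-mass may come entirely from $A\setminus H_+$), and no such locality is available. This is precisely the difficulty the paper's proof is built to circumvent: it does not attempt a one-to-one comparison of slice derivations of $A$ and $B$, but instead proves a quantitative approximate inclusion $\langle\eta\rangle_{\varepsilon+\delta}^{N(\varepsilon,\delta,\zeta,n)}(A)\subset \overline{A\setminus H}^*+\tfrac{\zeta}{2}(A-A)$, handled by induction on the \emph{finite} slice index of $H\cap A$. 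The mechanisms that make this work — the modified derivation $\langle\eta\rangle'_\varepsilon(\cdot\,|_D)$ relative to the convex set $D=\overline{A\setminus H}^*$, the fattening Lemma~\ref{fat}, and above all the geometric contraction of Lemma~\ref{convexy} controlling how the auxiliary sets $A_k$ converge to $D$ — have no counterpart in your argument. Without them, one cannot transfer information about slices of $H\cap A$ to slices of $A$.
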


\begin{proof} Since the measure $\eta$ is homogeneous, we may assume without loss of generality that $\eta(A)\le 1$. Then we can also assume that $\eps,\delta$ are in $(0,1)$, since otherwise $\langle \eta \rangle_{\varepsilon+\delta}^\omega(A)=\emptyset$. In fact, we are going to prove a more precise statement. Namely, for every $\varepsilon, \delta, \zeta \in (0,1)$ and $n \in {\Bbb N}$, there exists  $N=N(\varepsilon,\delta, \zeta, n)$ such that whenever $A$ is convex weak$^*$-compact with $\eta(A) \leq 1$ and $H$ is a weak$^*$-open halfspace we have
\begin{equation}\label{rec}
\langle \eta \rangle_\varepsilon^n (\overline{H}^* \cap A)=\emptyset \Rightarrow
 \langle \eta \rangle_{\varepsilon+\delta}^N(A) \subset (A \setminus H) + (\zeta/2) (A-A).
\end{equation}
Let $B=\frac{1}{2}(A-A)$, so $\eta(B) \leq 1$. We shall use an inductive argument on $n\in \Ndb$ to prove the result. For $n=1$ the result is true, even with $\delta=\zeta=0$. Indeed,  if $\langle \eta \rangle_\varepsilon' (\overline{H}^* \cap A)=\emptyset$, the usual compactness argument implies that $\eta(\overline{H}^* \cap A)<\eps$ and therefore that $\langle \eta \rangle_{\varepsilon}'(A) \subset A \setminus H$.\\
Suppose now that it is true for some $n\ge1$ and let $H$ be a weak$^*$-open halfspace with
$\langle \eta \rangle_\varepsilon^{n+1}(\overline{H}^* \cap A)=\emptyset$ and
$\langle \eta \rangle_\varepsilon^n (\overline{H}^* \cap A) \not = \emptyset$.
Fix $p \in {\Bbb N}$ such that $(1-\delta/4)^{p-1} \leq \zeta/4$ (this choice only depends on $\delta$ and $\zeta$). We will use Lemma \ref{convexy} with $C=\langle \eta \rangle_\varepsilon^n (\overline{H}^* \cap A)$, $D=A \setminus H$  and $\delta/2$ instead of $\delta$. Then, if $(A_k)_{k=1}^\infty$ is defined as in Lemma \ref{convexy}, we obtain that
\begin{equation}\label{claim}
A_p \subset (A \setminus H)+ \frac{\zeta}{2} B .
\end{equation}
Indeed, by the Hahn-Banach Theorem, it is enough to show that
$$ \sup\{ g,A_p\} \leq \sup\{g, A \setminus H\} + \frac{\zeta}{2} $$
for every $g \in X \setminus \{0\}$ such  that $\sup\{g,B\}=1$.
Suppose that it is not the case. Then we have $ \sup\{ g,A_p\} > \sup\{g, A \setminus H\} $ and so $\sup\{g,C\} > \sup\{g,D\}$. On the other hand, $\eta(D)\le \eta(A)\le 1$ and $\langle \eta\rangle'_\eps (C)=\emptyset$ implies that $\eta(C)<\eps$. So, by Lemma \ref{convexy}, for our choice of $p$, we have
$$\sup\{g,A_p\} - \sup\{g,D\} \le \frac{\zeta}{4}\big(\sup\{g,A\} - \sup\{g,A \setminus H\}\big) \leq \frac{\zeta}{2}, $$
which leads to a contradiction.\\
Set first $A_0=A$. Assume $0\le k\le p-1$ and consider now $G$ a weak$^*$-open halfspace such that $\overline{G}^*\cap A_{k+1}$ is empty. Since $D \subset A_{k+1}$ we have $\overline{G}^* \cap A_k \cap D=\emptyset$, so
$\overline{G}^* \cap A_k \subset \overline{H}^* \cap A$ and thus $\langle \eta \rangle_\varepsilon^n (\overline{G}^* \cap A_k) \subset C$. In particular, $\langle \eta \rangle_\varepsilon^n (\overline{G}^* \cap \langle \eta \rangle'_{\varepsilon+\delta/2}(A_k|_D))$ is a subset of $C$. So
$$\langle \eta \rangle_\varepsilon^n \big(\overline{G}^* \cap \langle \eta \rangle'_{\varepsilon+\delta/2}(A_k|_D)\big)\ \subset \  \overline{G}^* \cap C \cap  \langle \eta \rangle'_{\varepsilon+\delta/2}(A_k|_D)\
\subset \ \overline{G}^* \cap A_{k+1} = \emptyset. $$
We can now deduce from our induction hypothesis that for any $\xi\in (0,1)$
$$ \langle \eta \rangle_{\varepsilon+\delta/2}^{1+N(\varepsilon,\delta/2,\xi,n)}(A_k) \subset (\langle \eta \rangle'_{\varepsilon+\delta/2}(A_k|_D)\setminus G)+\xi B\ \subset\ (A_k\setminus G)+\xi B.$$
The above inclusion being true for any weak$^*$-open halfspace such that $\overline{G}^*\cap A_{k+1}=\emptyset$, it follows from the Hahn-Banach Theorem that
$$\forall \xi \in (0,1)\ \ \langle \eta \rangle_{\varepsilon+\delta/2}^{1+N(\varepsilon,\delta/2,\xi,n)}(A_k)
\subset A_{k+1} + \xi B.$$
Pick now $\xi\in (0,1)$ so that $2p\xi\le \zeta$ and $2p\xi\le \delta$ and let $m = 1+N(\varepsilon,\delta/2,\xi,n)$. Mixing the former inclusion with Lemma \ref{fat} gives
$$ \langle \eta \rangle^m_{\varepsilon +\delta}(A_k + k \xi B)\ \subset\ \langle \eta \rangle^m_{\varepsilon +\frac{\delta}{2}+k\xi}(A_k + k \xi B)\ \subset \langle \eta \rangle^m_{\varepsilon+\frac{\delta}{2}}(A_k) + k\xi B\ \subset\ A_{k+1} + (k+1) \xi B.$$
Chaining these inclusions from $k=0$ to $k=p-1$ and using (\ref{claim}) we get that
$$ \langle \eta \rangle^{pm}_{\varepsilon+\delta}(A)\ \subset\ A_p + (\zeta/2) B\ \subset\  A \setminus H + \zeta B.$$
This concludes the proof of (\ref{rec}).
\end{proof}

The next proposition, which is a version of Lemma~\ref{elementary} for slice derivations, is a consequence of the previous results. We shall see later that it applies to the $\omega$-iterates of the Kuratowski measure.

\begin{Prop}\label{laststep} Let $\eta$ be a sublinear measure of non-compactness defined on a dual Banach space $X^*$. Assume that the measure $\eta$ satisfies the following additional property: there exists a constant
$\theta>0$ such that for any convex weak$^*$-compact subset $A$ of $X^*$, $[\eta]_\varepsilon^\omega(A)=\emptyset$ implies that
$\langle \eta \rangle_{\theta \varepsilon}^\omega(A)=\emptyset$.\\
Then for any convex weak$^*$-compact subset $A$ of $X^*$, any  $\varepsilon>0$, any $\lambda>\theta$ and every ordinal $\alpha$ we have
$$ \langle \eta \rangle^{\omega.\alpha}_{\lambda \varepsilon}(A) \subset \langle \eta^{\omega} \rangle^\alpha_{\varepsilon} (A). $$
\end{Prop}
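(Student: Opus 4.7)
The plan is to proceed by transfinite induction on $\alpha$, with the base case $\alpha=1$ being the heart of the matter; the successor and limit steps are routine bookkeeping.

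\textbf{Base case} ($\alpha=1$). The goal is $\langle \eta \rangle^\omega_{\lambda\varepsilon}(A)\subset \langle \eta^\omega \rangle'_\varepsilon(A)$, which I argue by contrapositive. Fix $x^*\in A\setminus\langle \eta^\omega \rangle'_\varepsilon(A)$: there is a weak$^*$-closed halfspace $H$ containing $x^*$ with $\eta^\omega(A\cap H)<\varepsilon$. Because $\lambda>\theta$ and the inequality $\eta^\omega(A\cap H)<\varepsilon$ is strict, I can select $\varepsilon'\in(\eta^\omega(A\cap H),\varepsilon)$ and $\delta>0$ with $\theta\varepsilon'+\delta\le \lambda\varepsilon$. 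By the definition of $\eta^\omega$ we have $[\eta]^\omega_{\varepsilon'}(A\cap H)=\emptyset$; the additional hypothesis, applied to the convex weak$^*$-compact set $A\cap H$, upgrades this to $\langle \eta \rangle^\omega_{\theta\varepsilon'}(A\cap H)=\emptyset$. Lemma~\ref{recursion} then yields
\[
\langle \eta \rangle^\omega_{\lambda\varepsilon}(A)\subset \langle \eta \rangle^\omega_{\theta\varepsilon'+\delta}(A)\subset \overline{A\setminus H}^*.
\]
When $x^*$ lies in the weak$^*$-interior of $H$ (writing $H=\{f\le a\}$, when $f(x^*)<a$), then $x^*\notin \overline{A\setminus H}^*\subset\{f\ge a\}$, so $x^*\notin \langle \eta \rangle^\omega_{\lambda\varepsilon}(A)$ as desired.

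\textbf{Successor step.} Assume the claim for $\alpha$ and set $B:=\langle \eta \rangle^{\omega\cdot\alpha}_{\lambda\varepsilon}(A)$; this is convex and weak$^*$-compact as slice derivations preserve those properties on convex weak$^*$-compact sets. Since $\omega\cdot(\alpha+1)=\omega\cdot\alpha+\omega$ one has $\langle \eta \rangle^{\omega\cdot(\alpha+1)}_{\lambda\varepsilon}(A)=\langle \eta \rangle^\omega_{\lambda\varepsilon}(B)$. The base case applied to $B$ gives $\langle \eta \rangle^\omega_{\lambda\varepsilon}(B)\subset \langle \eta^\omega \rangle'_\varepsilon(B)$, and combining with the inductive inclusion $B\subset \langle \eta^\omega \rangle^{\alpha}_\varepsilon(A)$ and the monotonicity of $\langle \eta^\omega \rangle'_\varepsilon$ in its set argument,
\[
\langle \eta^\omega \rangle'_\varepsilon(B)\subset \langle \eta^\omega \rangle'_\varepsilon\bigl(\langle \eta^\omega \rangle^{\alpha}_\varepsilon(A)\bigr)=\langle \eta^\omega \rangle^{\alpha+1}_\varepsilon(A).
\]
For the \textbf{limit step}, when $\alpha$ is a limit ordinal, so is $\omega\cdot\alpha=\sup_{\beta<\alpha}\omega\cdot\beta$, and the definitions of the transfinite iterations combined with the induction hypothesis give
\[
\langle \eta \rangle^{\omega\cdot\alpha}_{\lambda\varepsilon}(A)=\bigcap_{\gamma<\omega\cdot\alpha}\langle \eta \rangle^{\gamma}_{\lambda\varepsilon}(A)\subset \bigcap_{\beta<\alpha}\langle \eta \rangle^{\omega\cdot\beta}_{\lambda\varepsilon}(A)\subset \bigcap_{\beta<\alpha}\langle \eta^\omega \rangle^{\beta}_\varepsilon(A)=\langle \eta^\omega \rangle^{\alpha}_\varepsilon(A).
\]

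\textbf{Main obstacle.} The chief technical point is the boundary case in the base step: if $x^*\in\partial H$, then $x^*$ could a priori lie in $\overline{A\setminus H}^*$, and the above inclusion would no longer be enough to conclude. My plan to handle this is to use the strict inequality $\eta^\omega(A\cap H)<\varepsilon$ together with the sublinearity of $\eta^\omega$ (Proposition~\ref{additive}) and its resulting monotone continuity along decreasing weak$^*$-compact families to replace $H$ by a slightly enlarged halfspace $H_t=\{f\le a+t\}$ whose intersection with $A$ still has $\eta^\omega$-measure strictly less than $\varepsilon$; $x^*$ then lies in the weak$^*$-interior of $H_t$ and the preceding argument applies, the slack in $\lambda>\theta$ absorbing the small losses introduced by the enlargement.
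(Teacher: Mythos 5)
Your structure (base case via the additional hypothesis and Lemma~\ref{recursion}, then routine successor/limit steps) mirrors the paper, and the successor and limit steps are fine. The genuine gap is in your proposed handling of the boundary case in the base step.

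You take $H$ directly from the definition of $\langle\eta^\omega\rangle'_\varepsilon(A)$, which only gives $x^*\in H$, not $x^*$ in the weak$^*$-interior of $H$, and you propose to fix this by enlarging $H$ to $H_t=\{f\le a+t\}$, invoking a ``monotone continuity'' of $\eta^\omega$ along decreasing weak$^*$-compact families. Sublinearity does not imply any such continuity, and in general it is false: a weak$^*$-compact convex set $A$ and a halfspace $H$ with $\eta^\omega(A\cap H)$ small can perfectly well have $\eta^\omega(A\cap H_t)$ bounded away from $0$ for every $t>0$. For instance, take $X^*=\ell^1\oplus_1\Rdb$ with predual $c_0\oplus_\infty\Rdb$, let $A$ be the weak$^*$-closed convex hull of $(0,0)$, $(0,1)$ and the points $(e_n,1-1/n)$, and let $f=(0,1)\in X$. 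Then $A\cap\{f\ge 1\}=\{(0,1)\}$ is a singleton, but for every $t>0$ the set $A\cap\{f\ge 1-t\}$ contains infinitely many of the mutually $2$-separated points $(e_n,1-1/n)$, so its Kuratowski measure is at least $2$. Thus the mechanism you rely on fails, and the argument as written does not close the boundary case.

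The correct fix is much simpler and is what the paper uses implicitly. Since $A$ is convex and weak$^*$-compact and the slice derivation preserves these properties (by Lemma~\ref{convexhull}, $\langle\eta^\omega\rangle'_\varepsilon(A)=\wsconv[\eta^\omega]'_\varepsilon(A)$), the set $D:=\langle\eta^\omega\rangle'_\varepsilon(A)$ is convex and weak$^*$-compact. Given $x^*\in A\setminus D$, do \emph{not} take $H$ from the definition of $D$; instead use Hahn--Banach to strictly separate $x^*$ from $D$, producing $f\in X$ and $a<c<b$ with $f(x^*)\le a$ and $\inf_D f\ge b$. The halfspace $H=\{f\le c\}$ is disjoint from $D$ (so the rest of your argument applies to $H$) and contains $x^*$ in its weak$^*$-interior, so $x^*\notin\overline{A\setminus H}^*\subset\{f\ge c\}$. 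Equivalently, you can argue as the paper does without a contrapositive: show $\langle\eta\rangle^\omega_{\lambda\varepsilon}(A)\subset\overline{A\setminus H}^*$ for \emph{every} halfspace $H$ disjoint from $D$, and then intersect over all such $H$; that the intersection is contained in $D$ is exactly the Hahn--Banach separation argument. Note also that your intermediate $\varepsilon'\in(\eta^\omega(A\cap H),\varepsilon)$ is unnecessary: $\eta^\omega(A\cap H)<\varepsilon$ already gives $[\eta]^\omega_\varepsilon(A\cap H)=\emptyset$, so one can apply the hypothesis with $\varepsilon$ itself and take $\delta<\varepsilon(\lambda-\theta)$ as the paper does.
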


\begin{proof}
It is enough to show that
$$ \langle \eta \rangle^{\omega}_{\lambda \varepsilon}(A) \subset
\langle \eta^{\omega} \rangle'_{\varepsilon} (A) $$
since the general statement follows easily by iteration.
Fix $0 < \delta < \varepsilon (\lambda-\theta)$. For any weak$^*$-open halfspace $H$ such that $\overline{H}^* \cap \langle \eta^{\omega} \rangle'_{\varepsilon} (A) =\emptyset$ we have
$ \langle \eta^{\omega} \rangle'_{\varepsilon} (\overline{H}^* \cap A) =\emptyset$ and so
$[ \eta ]^\omega_{\varepsilon} (\overline{H}^* \cap A) =\emptyset$.
By the assumption, $\langle \eta \rangle^\omega_{\theta \varepsilon} (\overline{H}^* \cap A) =\emptyset$ and by
Lemma~\ref{recursion} we have
$$\langle \eta \rangle^\omega_{\lambda \varepsilon} (A) \subset
\langle \eta \rangle^\omega_{\theta \varepsilon+\delta} (A) \subset A \setminus H.$$
Since $H$ was arbitrary, we get that
$ \langle \eta \rangle^\omega_{\lambda \varepsilon} (A) \subset \langle \eta^{\omega} \rangle'_{\varepsilon} (A)  $ as we wanted.
\end{proof}

\section{Application to the Kuratowski measure of non-compactness.}\label{kuratowski}

In this section we will show that $\sigma^{\omega^n}$ is convexifiable for every $n\geq 0$ and then use it together with the sublinearity of $\sigma^{\omega^n}$ in order to compare the Szlenk index and the convex Szlenk index of a Banach space.

\medskip

First, we wish to recall some definitions.

\begin{Def}
Let $X$ be a Banach space, $K$ a weak$^*$-compact subset of $X^*$ and $\eps>0$.\\
We define $Sz(K,\eps)=\inf\{\alpha,\ [\sigma]^\alpha_\eps(K)=\emptyset\}$ if it exists and $Sz(K,\eps)=\infty$ otherwise.
Then $Sz(K):=\sup_{\eps>0}Sz(K,\eps)$.\\
Further we define $Kz(K,\eps)=\inf\{\alpha,\ \langle \sigma\rangle ^\alpha_\eps(K)=\emptyset\}$ if it exists and $Kz(K,\eps)=\infty$ otherwise.
We put $Kz(K):=\sup_{\eps>0}Kz(K,\eps)$\\
Denote now
$$A^0_\eps:=K \ \  A^{\beta+1}_\eps:=\overline{\rm conv}^*\big([\sigma]'_\eps(A^\beta_\eps)\big)\ \ {\rm and}\ \
A^{\beta}_\eps:=\bigcap_{\gamma<\beta}A^{\gamma}_\eps\ {\rm if\ \beta\ is\ a\ limit\ ordinal}.$$
We set $Cz(K,\eps)=\inf\{\beta,\ A^{\beta}_\eps=\emptyset\}$ if it exists and $Cz(K,\eps)=\infty$ otherwise,
and $Cz(K):=\sup_{\eps>0}Cz(K,\eps)$.\\
Finally we denote $Sz(X)=Sz(B_{X^*})$ and $Cz(X)=Cz(B_{X^*})$. The ordinal $Sz(X)$  is called the \emph{Szlenk index} of $X$ and $Cz(X)$ is called the \emph{convex Szlenk index} of $X$.
\end{Def}

Note that for any ordinal $\alpha$ and all $\eps>0$
\begin{equation}\label{s-sigma}
[\sigma]_{2\varepsilon}^\alpha(K)\subset s_\varepsilon^\alpha(K) \subset [\sigma]_{\varepsilon/2}^\alpha(K).
\end{equation}
It follows that the above definitions of the Szlenk index and the convex Szlenk index of a Banach space actually coincide with those given in the introduction. Moreover, it follows from Lemma \ref{convexhull} that for any weak$^*$-compact and convex subset $K$ of $X^*$, $Cz(K)=Kz(K)$ and it is clear that $Cz(K)=Kz(K)\ge Sz(K)$.

\medskip

Let us first state an elementary fact, well known for the case $\eta=\sigma$ and $K=B_{X^*}$ (see \cite{Sersouri1989} for the original idea, or \cite{Lancien2006}).

\begin{Lem}\label{formSz} Let $K$ be a convex weak$^*$-compact subset of a dual Banach space $X^*$ and let $\eta$ be a homogeneous and translation invariant measure of non-compactness on $X^*$. Then

\smallskip
(i) For any $\eps>0$ and any ordinal $\alpha$, \  $\frac12 [\eta]^\alpha_\eps(K)+ \frac12 K \subset [\eta]^\alpha_{\eps/2}(K).$

\smallskip
(ii) For any $\eps>0$, $n\in \Ndb$ and any ordinal $\alpha$, \ $[\eta]_\eps^{\alpha}(K) \subset [\eta]_{\eps/2^n}^{\alpha.2^n}(K)$.

\smallskip
(iii) If $Sz(K)\neq \infty$ and $K\neq \emptyset$, then there exists an ordinal $\alpha$ such that $Sz(K)=\omega^\alpha$.
\end{Lem}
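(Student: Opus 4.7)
The plan is to prove (i) by transfinite induction on $\alpha$, derive (ii) from (i) via an affine scaling trick, and extract (iii) from (ii) by a Cantor normal form argument. The central technical module, which powers both (i) and (ii), is a neighborhood-pullback calculation built around a $\tfrac12$-contractive affine map; this is where all the work is concentrated.

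For (i), the base case $\alpha=0$ is immediate from the convexity of $K$. For the successor step, fix $x^*\in [\eta]^{\alpha+1}_\eps(K)$, $y^*\in K$, and set $z^*:=\tfrac12 x^*+\tfrac12 y^*$; the inductive hypothesis already places $z^*$ in $[\eta]^\alpha_{\eps/2}(K)$. Given a basic weak$^*$-neighborhood $U=\{w^*:|w^*(x_i)-z^*(x_i)|<\delta_i\}$ of $z^*$, I would pull it back to the basic weak$^*$-neighborhood $V=\{w^*:|w^*(x_i)-x^*(x_i)|<2\delta_i\}$ of $x^*$, so that the affine map $T(w^*):=\tfrac12(w^*+y^*)$ sends $\overline{V}^*$ into $\overline{U}^*$. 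Since $x^*\in[\eta]'_\eps([\eta]^\alpha_\eps(K))$, we have $\eta([\eta]^\alpha_\eps(K)\cap \overline{V}^*)\geq \eps$; applying $T$ and using the inductive hypothesis drops the image inside $[\eta]^\alpha_{\eps/2}(K)\cap \overline{U}^*$, while homogeneity and translation invariance of $\eta$ give $\eta(T(A))=\tfrac12\eta(A)$. Together these produce the required estimate $\eta([\eta]^\alpha_{\eps/2}(K)\cap \overline{U}^*)\geq \eps/2$. The limit case is a routine intersection.

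For (ii), it suffices to treat $n=1$, since the general statement follows by iteration using associativity of ordinal multiplication. Fix $x^*\in[\eta]^\alpha_\eps(K)$ and introduce $T_{x^*}(w^*):=\tfrac12(w^*+x^*)$, which fixes $x^*$. By (i) applied with $y^*=x^*$, $T_{x^*}(K)\subset[\eta]^\alpha_{\eps/2}(K)$. A separate transfinite induction on $\beta$ that replays verbatim the pullback mechanism of (i) yields the functorial inclusion $T_{x^*}([\eta]^\beta_\eps(K))\subset[\eta]^\beta_{\eps/2}(T_{x^*}(K))$ for every ordinal $\beta$. Taking $\beta=\alpha$ and combining with monotonicity of the derivation in the base set gives $T_{x^*}([\eta]^\alpha_\eps(K))\subset[\eta]^\alpha_{\eps/2}([\eta]^\alpha_{\eps/2}(K)) = [\eta]^{2\alpha}_{\eps/2}(K)$; since $T_{x^*}(x^*)=x^*$, this places $x^*$ in $[\eta]^{2\alpha}_{\eps/2}(K)$, completing the case $n=1$.

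For (iii), assume $K$ is nonempty and set $\gamma:=Sz(K)$. Given $\beta<\gamma$, by cofinality of $\{Sz(K,\eps):\eps>0\}$ in $\gamma$ one can choose $\eps_0>0$ with $[\eta]^\beta_{\eps_0}(K)\neq\emptyset$, and then (ii) upgrades this to $[\eta]^{\beta\cdot 2^n}_{\eps_0/2^n}(K)\neq\emptyset$ for every $n\in\Ndb$; hence $\gamma\geq Sz(K,\eps_0/2^n)>\beta\cdot 2^n$. Passing to the supremum in $n$ yields $\gamma\geq\beta\cdot\omega$ for every $\beta<\gamma$. Writing $\gamma$ in Cantor normal form $\omega^{\alpha_1}m_1+\cdots+\omega^{\alpha_k}m_k$ with $\alpha_1>\cdots>\alpha_k\geq 0$, if $k\geq 2$ or $m_1\geq 2$ then the choice $\beta:=\omega^{\alpha_1}<\gamma$ produces the contradiction $\beta\cdot\omega=\omega^{\alpha_1+1}>\gamma$. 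Hence $k=m_1=1$ and $\gamma=\omega^{\alpha_1}$, as required. The hardest step is the successor case of (i)---namely doubling the radii of the basic neighborhood so that $T$ compresses $\overline{V}^*$ into $\overline{U}^*$ while the scaling $\eta\circ T=\tfrac12\eta$ transports enough mass---but once this module is in hand, everything else falls into place.
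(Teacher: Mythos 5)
Your proof is correct and follows essentially the same route as the paper: (i) is the paper's argument run forwards rather than by contraposition, (ii) encodes the same use of translation invariance and homogeneity via the affine map $T_{x^*}$ together with monotonicity of the derivation, and (iii) is the Cantor normal form argument the paper leaves to the reader as ``an easy consequence of (ii) applied to $\sigma$.'' The only slip is notational: after iterating the $\alpha$-fold derivation twice the exponent should be written $\alpha\cdot 2$ (i.e.\ $\alpha+\alpha$) rather than $2\alpha=2\cdot\alpha$, a distinction that matters for infinite ordinals since $2\cdot\alpha=\alpha$ whenever $\alpha\ge\omega$; your later use of $\beta\cdot 2^n$ in (iii) shows you have the correct ordinal in mind.
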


\begin{proof} (i) Since $K$ is convex, the statement is clearly true for $\alpha=0$. It also passes easily to limit ordinals. So assume that it is satisfied for some ordinal $\alpha$ and consider $x^*\notin [\eta]^{\alpha+1}_{\eps/2}(K)$. Assume, as we may, that $x^*$ belongs to $\frac12 [\eta]^\alpha_\eps(K)+ \frac12 K$ and thus by induction hypothesis to $[\eta]^\alpha_{\eps/2}(K)$. Then there exists a weak$^*$-neighborhood $U$ of $x^*$ such that $\eta([\eta]_{\varepsilon/2}^\alpha(K) \cap \overline{U}^*)<\eps/2$. For any  $u^*\in [\eta]^\alpha_\eps(K)$ and $v^*\in K$ so that $2x^*=u^*+v^*$,  we have that $W=2U-v^*$ is a weak$^*$-neighborhood  of $u^*$ such that $\eta(\overline{W}^*\cap [\eta]^\alpha_\eps(K))<\eps$. Indeed, we have by the definition of $W$ and the inductive hypothesis that $\frac12 (\overline{W}^*\cap [\eta]^\alpha_\eps(K))+\frac12 v^* \subset \overline{U}^* \cap [\eta]^\alpha_{\eps/2}(K)$, and $\eta$ is homogeneous and translation invariant. This shows that $x^*\notin \frac12 [\eta]^{\alpha+1}_\eps(K)+ \frac12 K$.

\smallskip (ii) It is enough to show that $[\eta]_\eps^{\alpha}(K) \subset [\eta]_{\eps/2}^{\alpha.2}(K)$. So let $x^*\in [\eta]_\eps^{\alpha}(K)$. It follows from (i) that $\frac12 x^*+\frac12 K \subset [\eta]^\alpha_{\eps/2}(K)$. Since $\eta$ is translation invariant we deduce that $\frac12 x^*+[\eta]^\alpha_{\eps/2}(\frac12 K) \subset [\eta]^{\alpha.2}_{\eps/2}(K)$. Finally we use the homogeneity of $\eta$ to get that $\frac12 x^*\in [\eta]^\alpha_{\eps/2}(\frac12 K)$ and therefore that $x^*\in [\eta]^{\alpha.2}_{\eps/2}(K)$.

\smallskip (iii) This is an easy consequence of property (ii) applied to the Kuratowski measure of non-compactness.

\end{proof}

We will need to use special families of trees on $\Ndb$. Let us first recall the basic notation and definitions about trees on $\Ndb$.\\
We denote $\Ndb^{<\omega}=\bigcup_{k=1}^\infty \Ndb^k \ \cup \{\emptyset\}$, where $\emptyset$ denotes the empty sequence. For $a\in \Ndb^{<\omega}$, we denote by $|a|$ the length of $a$, which is defined by $|a|=0$ if $a=\emptyset$ and $|a|=k$ if $a=(n_1,..,n_k)\in \Ndb^k$. There is a natural order $\le$ on $\Ndb^{<\omega}$ defined as follows: for $a,b\in \Ndb^{<\omega}$, $a\le b$ if $a=\emptyset$ or $b=(n_1,..,n_j)\in \Ndb^j$ with $j\in \Ndb$ and $a=(n_1,..,n_k)$ with $k\le j$. For $a,b \in \Ndb^{<\omega}$, we say that $b$ is a \emph{successor} of $a$, or that $a$ is the \emph{predecessor} of $b$ if $a\le b$ and $|b|=|a|+1$. If $a=(n_1,..,n_k)$ and $b=(m_1,..,m_j)$, we denote by $a^\smallfrown b$ the sequence $(n_1,..,n_k,m_1,..,m_j)$ (and $a^\smallfrown b=b$ if $a=\emptyset$, $a^\smallfrown b=a$ if $b=\emptyset$). For $a\in \Ndb^{<\omega}$ and $S$ a subset of $\Ndb^{<\omega}$, $a^\smallfrown S$ denotes the set $\{a^\smallfrown b,\ b\in S\}$.\\
A subset $T$ of $\Ndb^{<\omega}$ is a \emph{tree} on $\Ndb$ if for any $a$ in $T$ and any $b$ in $\Ndb^{<\omega}$ such that $b\le a$, we have that $b\in T$. A subset $B$ of a tree $T$ is a \emph{branch} of $T$ if it is a maximal totally ordered subset of $T$. For any tree $T$ on $\Ndb$, its derivative is $T'=T^1=\set{s \in T: s^\smallfrown (n) \in T \mbox{ for some }n \in \Natural}$. Then $T^\alpha$ is defined inductively for $\alpha$ ordinal as follows: $T^{\alpha+1}=(T^\alpha)'$ and $T^\alpha=\bigcap_{\beta<\alpha}T^\beta$ if $\alpha$ is a limit ordinal. A tree $T$ is said to be \emph{well founded} if there exists an ordinal $\alpha$ such that $T^\alpha=\emptyset$, or equivalently if all its branches are finite. If $T$ is a well founded tree on $\Ndb$, then its \emph{height} is the infimum of all $\alpha$ so that $T^\alpha=\emptyset$ and is denoted $o(T)$. Note that the height of a non empty well founded tree on $\Ndb$ is always a countable successor ordinal and that $T^\alpha=\{\emptyset\}$ if $o(T)=\alpha+1$.

\medskip We are now ready to define our families of trees on $\Ndb$.

\begin{Def}
For each ordinal $\alpha<\omega_1$ we define a family of trees $\cal T_\alpha$ as follows. We set ${\cal T}_0:=\{\{\emptyset\}\}$. Let now $\alpha$ be a countable ordinal such that $\alpha\ge1$.\\
If $\alpha=\beta+1$ is a successor ordinal we say that $T \in \cal T_\alpha$ if there exists an increasing sequence $(n_k)_{k=0}^\infty$ in $\Ndb$ and a sequence $(T_k)_{k=0}^\infty$ in ${\cal T}_{\beta}$ such that
$$T=\{\emptyset\}\cup  \bigcup_{k=0}^\infty (n_k)^\smallfrown T_{k}.$$
If $\alpha$ is a limit ordinal we say that $T \in \cal T_\alpha$ if there exists an increasing sequence $(n_k)_{k=0}^\infty$ in $\Ndb$, an increasing sequence $(\alpha_k)_{k=0}^\infty$ in $[0,\alpha)$ such that $\alpha_k \nearrow \alpha$ and a sequence $(T_k)_{k=0}^\infty$ such that $T_k \in {\cal T}_{\alpha_k}$ for all $k$ and
$$T=\{\emptyset\}\cup  \bigcup_{k=0}^\infty (n_k)^\smallfrown T_{k}.$$
\end{Def}

\begin{Remark} One can easily verify that for all  $\alpha<\omega_1$, $\cal T_\alpha$ is indeed a family of trees on $\Ndb$ and that for all $T \in {\cal T}_\alpha$, $o(T)=\alpha+1$.
\end{Remark}

\begin{Def} Let $T \in {\cal T}_\alpha$ for some ordinal $\alpha<\omega_1$. Note that for $s\in T^1$, there exists an increasing sequence in $\Ndb$ that we denote $(n_k^s)_{k=1}^\infty$ such that the set of successors of $s$ in $T$ is $\{s^\smallfrown (n_k^s),\ k\in \Ndb\}$. Note also that, if $s\in T^{\beta+1}$ for some $\beta \in [1,\alpha)$, then there exists $k_0\in \Ndb$ such that $s^\smallfrown (n_k^s) \in T^\beta$, for all $k\ge k_0$.\\
Then, we say that a family $(x^*_s)_{s \in T} \subset X^*$ is \emph{weak$^*$-continuous} if $x^*_{s\smallfrown (n_k^s)} \wstoo x^*_s$ as $k \to \infty$ for all $s \in T^1$. We say that it is \emph{$\eps$-separated} if $\|x^*_s-x^*_{s \smallfrown (n_k^s)}\|\geq \eps$ for all $s \in T^1$ and all $k \in \Natural$.
\end{Def}

Our first lemma follows from (\ref{s-sigma}) and the classical characterization of the Szlenk index in the separable case (see Lemma 3.4 in \cite{Lancien1996} for a non-separable version).

\begin{Lem}\label{treeCharSzlenk} Let $X$ be a separable Banach space, $K$ a weak$^*$-compact subset of $X^*$,  $\eps>0$ and $\alpha<\omega_1$.

\smallskip
(i) If $x^* \in [\sigma]_{\varepsilon}^\alpha(K)$, then for any $\rho<\frac{\eps}{4}$ there is $T\in {\cal T}_\alpha$ and a family $(x^*_s)_{s \in T} \subset K$ which is weak$^*$-continuous and  $\rho$-separated, such that $x^*_{\emptyset}=x^*$.

\smallskip
(ii) If there exists $T\in {\cal T}_\alpha$ and a family $(x^*_s)_{s \in T} \subset K$ which is weak$^*$-continuous and  $\eps$-separated, then $x^*_\emptyset \in [\sigma]_{\varepsilon}^\alpha(K)$.
\end{Lem}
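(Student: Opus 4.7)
The plan is a transfinite induction on $\alpha < \omega_1$ for both statements, following the parallel recursive definitions of $\mathcal{T}_\alpha$ and of the iterates $[\sigma]^\alpha_\varepsilon$. The base case $\alpha = 0$ is immediate in both directions, with the one-node tree $\{\emptyset\}$ and $x^*_\emptyset = x^*$, since $[\sigma]^0_\varepsilon(K) = K$.

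For (i), in the successor step $\alpha = \beta+1$, the assumption $x^* \in [\sigma]'_\varepsilon([\sigma]^\beta_\varepsilon(K))$ gives $\sigma([\sigma]^\beta_\varepsilon(K) \cap \overline{U}^*) \geq \varepsilon$ for every weak*-neighborhood $U$ of $x^*$. Using that norm-bounded subsets of $X^*$ are weak*-metrizable ($X$ separable), fix a decreasing base $(V_n)$ of weak*-neighborhoods of $x^*$; the set $[\sigma]^\beta_\varepsilon(K) \cap \overline{V}_n^*$ cannot lie in a ball of diameter strictly less than $\varepsilon$ centered at $x^*$, so we can extract $y^*_n \in [\sigma]^\beta_\varepsilon(K) \cap \overline{V}_n^*$ with $\|y^*_n - x^*\| \geq \varepsilon/2$, and $y^*_n \wstoo x^*$ by construction. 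The induction hypothesis supplies for each $n$ a tree $T_n \in \mathcal{T}_\beta$ and a weak*-continuous, $(\varepsilon/2)$-separated family rooted at $y^*_n$; gluing these at the common root $x^*$ by $T = \{\emptyset\} \cup \bigcup_k (n_k)^\smallfrown T_{n_k}$ with $n_k \nearrow \infty$ yields the required tree in $\mathcal{T}_{\beta+1}$. In the limit case, choose $\alpha_k \nearrow \alpha$ and note $x^* \in [\sigma]^{\alpha_k+1}_\varepsilon(K)$; the same extraction produces $y^*_k \in [\sigma]^{\alpha_k}_\varepsilon(K) \cap \overline{V}_k^*$ with $\|y^*_k - x^*\| \geq \varepsilon/2$, and the induction at $y^*_k$ gives subtrees of order $\alpha_k$ whose assembly at $x^*$ is a tree in $\mathcal{T}_\alpha$.

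For (ii), the key observation is: if $(z^*_k) \subset X^*$ converges weak* to $z^*$ with $\|z^*_k - z^*\| \geq \varepsilon$ for every $k$, then $\sigma(\{z^*_k : k \in \mathbb{N}\}) \geq \varepsilon$. Indeed, any cover by finitely many balls of diameter $< \varepsilon$ would trap infinitely many $z^*_k$ in a single norm-closed (hence weak*-closed, by Banach--Alaoglu) ball, forcing $z^*$ to lie in the same ball and contradicting the separation. Granting this, the successor case is direct: the immediate children $x^*_{(n_k)}$ converge weak* to $x^*$, stay at norm distance $\geq \varepsilon$ from $x^*$, and each belongs to $[\sigma]^\beta_\varepsilon(K)$ by the inductive hypothesis applied to the corresponding subtree, so monotonicity of $\sigma$ gives $\sigma([\sigma]^\beta_\varepsilon(K) \cap \overline{U}^*) \geq \varepsilon$ for every weak*-neighborhood $U$ of $x^*$, placing $x^*$ in $[\sigma]^{\beta+1}_\varepsilon(K)$. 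In the limit case, for any $\beta < \alpha$ pick $k_0$ with $\alpha_k \geq \beta$ for $k \geq k_0$; the induction gives $x^*_{(n_k)} \in [\sigma]^{\alpha_k}_\varepsilon(K) \subset [\sigma]^\beta_\varepsilon(K)$ for such $k$, and the same sequence argument yields $x^* \in [\sigma]^{\beta+1}_\varepsilon(K)$, hence $x^* \in \bigcap_{\beta < \alpha} [\sigma]^\beta_\varepsilon(K) = [\sigma]^\alpha_\varepsilon(K)$.

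I expect the main obstacle to be in part (i): converting the quantitative bound $\sigma \geq \varepsilon$ on the fragment-derived set into a single point at norm distance $\geq \varepsilon/2$ from $x^*$ is exactly what creates the factor-of-two loss between the measure hypothesis and the geometric separation in the tree. The bookkeeping of the tree indexing (thinning sequences so that $n_k \nearrow \infty$, and matching the successor/limit template of $\mathcal{T}_\alpha$) is mechanical once the correct points and subtrees are in hand.
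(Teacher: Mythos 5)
The paper gives no proof of this lemma (it is dismissed as ``classical''), so I am judging your argument on its own merits. Your overall strategy — transfinite induction parallel to the recursive structure of $\mathcal T_\alpha$, weak$^*$-metrizability of bounded sets in $X^*$ to run a sequential extraction, and the weak$^*$-closedness of norm-balls for part (ii) — is the right one, and part (ii) is correct as written.

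There is, however, a genuine gap in part (i), exactly at the step you flagged. From $\sigma\bigl([\sigma]^\beta_\varepsilon(K)\cap\overline V_n^*\bigr)\ge\varepsilon$ you conclude that this set ``cannot lie in a ball of diameter strictly less than $\varepsilon$ centered at $x^*$, so we can extract $y^*_n$ with $\|y^*_n-x^*\|\ge\varepsilon/2$.'' The first clause is true: for every $r<\varepsilon/2$ the set is not contained in $B(x^*,r)$, so $\sup_{y^*}\|y^*-x^*\|\ge\varepsilon/2$. But this supremum need not be attained (the norm is only weak$^*$-lower semicontinuous, not upper, on the weak$^*$-compact set), so the inference to a single point at distance $\ge\varepsilon/2$ fails. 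Concretely, take $X=c_0$, $X^*=\ell_1$, and
\[
K=\Bigl\{\bigl(\tfrac{\varepsilon}{2}-\tfrac1n\bigr)e_n:\ n\ge n_0\Bigr\}\cup\{0\},\qquad n_0>\tfrac{2}{\varepsilon}.
\]
Then $K$ is weak$^*$-compact, and for every weak$^*$-neighbourhood $U$ of $0$ the tail points are pairwise at $\ell_1$-distance $\varepsilon-\tfrac1n-\tfrac1m\to\varepsilon$, so no finite cover by balls of diameter $<\varepsilon$ exists and $0\in[\sigma]'_\varepsilon(K)$; yet every nonzero point of $K$ is at distance strictly less than $\varepsilon/2$ from $0$, so no $\tfrac{\varepsilon}{2}$-separated weak$^*$-convergent sequence to $0$ exists in $K$. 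In other words the stated constant $\varepsilon/2$ in (i) is itself too sharp; the correct (and classically intended) statement yields an $\varepsilon'$-separated tree for every $\varepsilon'<\varepsilon/2$. Your proof becomes correct verbatim once the conclusion is weakened this way (extract $y^*_n$ with $\|y^*_n-x^*\|\ge\varepsilon/2-\tfrac1n$, or fix any $\varepsilon'<\varepsilon/2$ at the outset), and the downstream use in Proposition~\ref{radial} — which passes through Lemma~\ref{elementary} and already has strict slack in the constants — is unaffected. You should make the $\delta$-loss explicit rather than assert the exact $\varepsilon/2$ extraction.
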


\begin{proof} (i) Let $x^* \in [\sigma]_{\varepsilon}^\alpha(K)$. By (\ref{s-sigma}), we have that $x^*\in s^\alpha_{\eps/2}(K)$. Then it is easy to show by transfinite induction on $\alpha<\omega_1$ that if $x^*\in s^\alpha_\delta(K)$ for some $\delta >0$, then for any $\rho<\frac{\delta}{2}$ there is $T\in {\cal T}_\alpha$ and a family $(x^*_s)_{s \in T} \subset K$ which is weak$^*$-continuous and  $\rho$-separated, such that $x^*_{\emptyset}=x^*$.

(ii) One can prove by induction on $\beta\le \alpha$ that if $s\in T^\beta$, then $x^*_s\in [\sigma]_\eps^\beta(K)$. Alternatively, (ii) can also be proved directly by a transfinite induction on $\alpha$.
\end{proof}

We need the following property of weak$^*$-continuous separated trees in $X^*$.

\begin{Lem}\label{extract} Let $X$ be a separable Banach space, $\alpha<\omega_1$ and $T\in {\cal T}_\alpha$. Assume that $(x^*_s)_{s \in T}$ is a weak$^*$-continuous and $\eps$-separated family in $X^*$ and that $K$ is a weak$^*$-compact subset of  $X^*$ such that, for some $0<a\le b<\infty$ we have:
$$\forall s\in T\ \ \exists \lambda_s\in [a,b]\ \ \lambda_sx^*_s\in K.$$
Then there exists $\lambda \in [a,b]$ such that $\lambda x^*_{\emptyset} \in [\sigma]^\alpha_{a \varepsilon}(K)$ and for any $\nu>0$ there exists $S\subset T$ so that $S\in {\cal T}_\alpha$ and $\abs{\lambda_s-\lambda}<\nu$ for all $s\in S$.
\end{Lem}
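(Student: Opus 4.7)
The plan is to proceed by transfinite induction on $\alpha$, using Lemma~\ref{treeCharSzlenk}(ii) to certify that $\lambda x^*_\emptyset\in[\sigma]^\alpha_{a\varepsilon}(K)$ by exhibiting a weak$^*$-continuous, appropriately separated family in $K$ rooted at that point; the induction is on a slight strengthening that also furnishes, for every $\delta>0$, a weak$^*$-continuous $(a\varepsilon-\delta)$-separated tree in $K$ of height $\alpha$ rooted at $\lambda x^*_\emptyset$. The base case $\alpha=0$ is immediate with $\lambda=\lambda_\emptyset$ and the trivial one-node tree.

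For the successor step $\alpha=\beta+1$, decompose $T=\{\emptyset\}\cup\bigcup_k(n_k)^\smallfrown T_k$ with $T_k\in\mathcal{T}_\beta$, and apply the inductive hypothesis to each subtree rooted at $(n_k)$ (viewed as isomorphic to $T_k$), obtaining $\mu_k\in[a,b]$ with $\mu_kx^*_{(n_k)}\in[\sigma]^\beta_{a\varepsilon}(K)$ together with the associated tree and refinement property. By compactness of $[a,b]$, extract $\mu_{k_j}\to\lambda$. Since $x^*_{(n_{k_j})}\wstoo x^*_\emptyset$, one gets $\mu_{k_j}x^*_{(n_{k_j})}\wstoo\lambda x^*_\emptyset$, while setting $M=\sup_{y^*\in K}\|y^*\|$ yields the estimate
\[
\|\lambda x^*_\emptyset-\mu_{k_j}x^*_{(n_{k_j})}\|\ \geq\ \lambda\varepsilon-|\lambda-\mu_{k_j}|\cdot\frac{M}{a}
\]
whose right-hand side tends to $\lambda\varepsilon\geq a\varepsilon$. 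For any $\delta>0$ and $j$ large enough, the children $\mu_{k_j}x^*_{(n_{k_j})}$ of the root $\lambda x^*_\emptyset$ weak$^*$-converge to it and are norm-separated from it by at least $a\varepsilon-\delta$; attaching to each child the weak$^*$-continuous $(a\varepsilon-\delta)$-separated tree of height $\beta$ supplied by the induction gives such a tree of height $\beta+1$ rooted at $\lambda x^*_\emptyset$. Lemma~\ref{treeCharSzlenk}(ii) then yields $\lambda x^*_\emptyset\in[\sigma]^{\beta+1}_{a\varepsilon-\delta}(K)$ for every $\delta>0$, and right-continuity of the Kuratowski fragment derivation in the separation parameter (a consequence of the continuity of $\sigma$ under decreasing sequences of weak$^*$-compact sets) promotes this to $\lambda x^*_\emptyset\in[\sigma]^{\beta+1}_{a\varepsilon}(K)$.

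The limit ordinal case is cleaner. With $\alpha_k\nearrow\alpha$, the same extraction provides $\lambda$ with $\mu_{k_j}x^*_{(n_{k_j})}\in[\sigma]^{\alpha_{k_j}}_{a\varepsilon}(K)$ and $\mu_{k_j}x^*_{(n_{k_j})}\wstoo\lambda x^*_\emptyset$. For each $\beta<\alpha$ the sequence eventually lies in the weak$^*$-closed set $[\sigma]^\beta_{a\varepsilon}(K)$, which therefore contains the weak$^*$-limit, and intersecting over $\beta<\alpha$ gives $\lambda x^*_\emptyset\in[\sigma]^\alpha_{a\varepsilon}(K)$.

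The subtree scalar condition propagates inductively: given $\nu>0$, choose $j_0$ with $|\lambda-\mu_{k_j}|<\nu/2$ for $j\geq j_0$ and apply the refinement with tolerance $\nu/2$ to each $(n_{k_j})$-subtree, producing sub-subtrees with scalars within $\nu/2$ of $\mu_{k_j}$ and hence within $\nu$ of $\lambda$; glueing furnishes the required $S$ of height $\alpha$. The principal difficulty is obtaining sharp $a\varepsilon$-separation in the successor step rather than merely $a\varepsilon-\delta$, and my resolution via right-continuity of the fragment derivation is the essential technical ingredient.
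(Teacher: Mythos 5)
Your successor step has a genuine gap at the point where you pass from $\lambda x^*_\emptyset\in[\sigma]^{\beta+1}_{a\varepsilon-\delta}(K)$ for every $\delta>0$ to $\lambda x^*_\emptyset\in[\sigma]^{\beta+1}_{a\varepsilon}(K)$. The ``right-continuity of the fragment derivation in the separation parameter'' that you invoke is not established, and the justification you offer for it --- continuity of $\sigma$ under decreasing sequences of weak$^*$-compact sets --- is in fact false. For instance, in $\ell_\infty=\ell_1^*$ the sets $K_n=\{0\}\cup\{e_k:k\ge n\}$ are weak$^*$-compact and decreasing with $\sigma(K_n)=1$ for all $n$, while $\sigma\big(\bigcap_n K_n\big)=\sigma(\{0\})=0$. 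At the first level the identity $\bigcap_{\delta>0}[\sigma]'_{\varepsilon-\delta}(K)=[\sigma]'_\varepsilon(K)$ is indeed true, but only because the ambient set $K$ is fixed and one is taking an infimum of a real quantity over $\delta$; at higher levels the inner derived sets $[\sigma]^\beta_{\varepsilon-\delta}(K)$ themselves shrink as $\delta\downarrow0$, and a continuity statement for $\sigma$ along that shrinking family is exactly what fails. The difficulty is created by your own framework: routing everything through Lemma~\ref{treeCharSzlenk}~(ii) requires exact separation, but your construction only delivers $(a\varepsilon-\delta)$-separation, and there is no proven way to remove the $\delta$ after the fact.

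The paper's proof sidesteps this entirely by never rebuilding a tree of height $\beta+1$. From the induction hypothesis it already has $\mu_{k}x^*_{(n_{k})}\in[\sigma]^\beta_{a\varepsilon}(K)$, a weak$^*$-closed set, so the weak$^*$-limit $\lambda x^*_\emptyset$ also lies in $[\sigma]^\beta_{a\varepsilon}(K)$; combined with $\liminf_k\|\mu_{k}x^*_{(n_{k})}-\lambda x^*_\emptyset\|\ge a\varepsilon$, membership in $[\sigma]'_{a\varepsilon}\big([\sigma]^\beta_{a\varepsilon}(K)\big)$ follows directly from the definition of the fragment derivation: for any weak$^*$-neighbourhood $U$ of $\lambda x^*_\emptyset$ and any $c<a\varepsilon$, the set $\overline{U}^*\cap[\sigma]^\beta_{a\varepsilon}(K)$ contains a tail of the sequence, which weak$^*$-converges to $\lambda x^*_\emptyset$ while staying at norm-distance eventually $\ge c$ from it; a covering of this set by finitely many closed (hence weak$^*$-closed) balls of diameter $<c$ would force one ball to contain both $\lambda x^*_\emptyset$ and infinitely many terms, a contradiction, so $\sigma(\overline{U}^*\cap[\sigma]^\beta_{a\varepsilon}(K))\ge c$ and, letting $c\uparrow a\varepsilon$, $\ge a\varepsilon$. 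Here the $\delta$-loss appears only at the single top level, where it dissolves trivially because a real number that is $\ge a\varepsilon-\delta$ for all $\delta>0$ is $\ge a\varepsilon$. Since you already have $\mu_{k}x^*_{(n_{k})}\in[\sigma]^\beta_{a\varepsilon}(K)$ in hand, replacing the tree-gluing and the final appeal to Lemma~\ref{treeCharSzlenk}~(ii) with this direct argument repairs your successor step; your limit-ordinal case and the subtree-refinement bookkeeping for the scalars $\lambda_s$ are fine and agree with the paper.
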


\begin{proof} The proof is a transfinite induction on $\alpha<\omega_1$. The statement is clearly true for $\alpha=0$. So let us assume that it is satisfied for all $\beta<\alpha$.\\
Assume first that $\alpha$ is a limit ordinal. Then $T=\{\emptyset\}\cup \bigcup_{k=0}^\infty (n_k)^\smallfrown T_{k}$
where $T_{k} \in {\cal T}_{\alpha_k}$ for each $k \in \Ndb$, with $n_k \nearrow \infty$ and
$\alpha_k\nearrow \alpha$.
By our induction hypothesis, for all $k\in \Ndb$, there exists $\lambda_k\in [a,b]$ such that $\lambda_kx^*_{(n_k)}\in [\sigma]^{\alpha_k}_{a \varepsilon}(K)$ and for any $\nu>0$ there exists $S_k\subset T_k$ so that $S_{k} \in {\cal T}_{\alpha_k}$ and for all $s\in S_k$, $\abs{\lambda_s-\lambda_k}<\frac{\nu}{2}$.
By taking a subsequence, we may assume that $\lambda_k \to \lambda \in [a,b]$ and for all $k$, $\abs{\lambda-\lambda_k}<\frac{\nu}{2}$.
Then $\lambda_kx^*_{(n_k)}\wstoo \lambda x^*_\emptyset$.
Since the sets $[\sigma]^{\alpha_k}_{a \varepsilon}(K)$ are weak$^*$-closed, we get that $\lambda x^*_\emptyset$ belongs to their intersection and therefore to $[\sigma]^\alpha_{a \varepsilon}(K)$.
Moreover, $S=\{\emptyset\} \cup \bigcup_{k=0}^\infty (n_k)^\smallfrown S_{k}$ is a subset of $T$ belonging to ${\cal T}_\alpha$ such that for all $s\in S$, $\abs{\lambda-\lambda_s}<\nu$.

Assume now that $\alpha=\beta+1$. Then $T=\{\emptyset\} \cup \bigcup_{k=0}^\infty (n_k)^\smallfrown T_{k}$
where $T_{k} \in {\cal T}_{\beta}$ for each $k \in \Ndb$. By our induction hypothesis, for all $k\in \Ndb$, there exists $\lambda_k\in [a,b]$ such that $\lambda_kx^*_{(n_k)}\in [\sigma]^{\beta}_{a \varepsilon}(K)$ and for any $\nu>0$ there exists $S_k\subset T_k$ so that $S_{k} \in {\cal T}_{\beta}$ and for all $s\in S_k$, $\abs{\lambda-\lambda_s}<\frac{\nu}{2}$.
By taking a subsequence, we may assume that $\lambda_k \to \lambda \in [a,b]$ and for all $k$, $\abs{\lambda-\lambda_k}<\frac{\nu}{2}$.
Then $\lambda_kx^*_{(n_k)}\wstoo \lambda x^*_\emptyset$ and $\liminf_k\|\lambda_kx^*_{(n_k)}-\lambda x^*_\emptyset\|\ge a\eps$. Therefore $\lambda x^*_\emptyset \in [\sigma]^{\beta+1}_{a \varepsilon}(K)=[\sigma]^\alpha_{a \varepsilon}(K)$.
We also have that $S=\{\emptyset\}\cup  \bigcup_{k=0}^\infty (n_k)^\smallfrown S_{k}$ is a subset of $T$ belonging to ${\cal T}_\alpha$ such that for all $s\in S$, $\abs{\lambda-\lambda_s}<\nu$. This finishes our induction.
\end{proof}

We now deduce the following.

\begin{Prop}\label{radial} Let $X$ be a separable Banach space and $A$ be a weak$^*$-compact subset of $X^*$ such that $[\sigma^{\omega^n}]^{m}_{\varepsilon}(A)=\emptyset$ for some integers $n\ge 0$ and $m\ge 1$. Then there is a symmetric radial weak$^*$-compact set $B$ containing $A$ and such that
\[
[\sigma^{\omega^n}]'_{7\varepsilon}(B) \subset \Big(1-\frac{1}{32(m+1)}\Big)B.
\]
\end{Prop}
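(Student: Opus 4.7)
The plan is to define $B$ as a finite union of rescaled balanced copies of the successive derivations of $A$, and to verify the contraction shell-by-shell via the commutation of the fragment derivation with finite unions together with the tree characterization of the Szlenk index (Lemma~\ref{treeCharSzlenk}) and the tree extraction lemma (Lemma~\ref{extract}).

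\textbf{Construction and reduction.} I will put $A_k := [\sigma^{\omega^n}]^k_\varepsilon(A)$ for $k = 0,\dots,m$, so that $A_0 = A$, $A_m = \emptyset$, and the $A_k$'s form a decreasing chain of weak$^*$-compact sets. Set $\delta := \tfrac{1}{32(m+1)}$ and $r_k := (1-\delta)^{-k}$ for $k = 0,\dots,m-1$; then $r_k/r_{k+1} = 1-\delta$ and $r_{m-1} \le e^{1/32}$ remains bounded. Define
\[
B \;:=\; \bigcup_{k=0}^{m-1}\, r_k \cdot [-1,1] \cdot A_k .
\]
Each piece $r_k [-1,1] A_k$ is symmetric and balanced, so $B$ is symmetric, radial, weak$^*$-compact, and contains $A = A_0 \subset [-1,1]A_0$. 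Since $\sigma^{\omega^n}$ is a measure of non-compactness (Lemma~\ref{iterate}), its fragment derivation commutes with finite unions (as established inside the proof of Lemma~\ref{iterate}), hence
\[
[\sigma^{\omega^n}]'_{4\varepsilon}(B) \;=\; \bigcup_{k=0}^{m-1} [\sigma^{\omega^n}]'_{4\varepsilon}\bigl(r_k[-1,1]A_k\bigr),
\]
and it will suffice to show each term lies in $(1-\delta) B$.

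\textbf{Shell analysis and dichotomy.} Fix $k$ and take $x^* = r_k t_0 y_0^* \in [\sigma^{\omega^n}]'_{4\varepsilon}(r_k[-1,1]A_k)$ with $t_0 \in [-1,1]$, $y_0^* \in A_k$. By Lemma~\ref{elementary}, $x^* \in [\sigma]^{\omega^n}_{3\varepsilon}(r_k[-1,1]A_k)$; by Lemma~\ref{treeCharSzlenk}(i), there is $T \in \mathcal T_{\omega^n}$ and a weak$^*$-continuous $\tfrac{3\varepsilon}{2}$-separated tree $(r_k t_s y_s^*)_{s \in T}$ in $r_k[-1,1]A_k$ rooted at $x^*$. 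A Ramsey-type refinement using weak$^*$-compactness of $[-1,1] \times A_k$ yields a sub-tree of class $\mathcal T_{\omega^n}$ along which $(t_s)$ and $(y_s^*)$ are separately continuous. If $|t_0| \le 1-\delta$, then immediately $x^* \in (1-\delta) r_k[-1,1]A_k \subset (1-\delta)B$. Otherwise $|t_0| > 1-\delta$, and after a further refinement ensuring $|t_s| \ge \tfrac12$ at every node and $|t_{s^\smallfrown n} - t_s|\cdot \sup_{A_k}\|\cdot\|$ arbitrarily small, the decomposition
\[
r_k\bigl(t_{s^\smallfrown n} y_{s^\smallfrown n}^* - t_s y_s^*\bigr) \;=\; r_k t_s (y_{s^\smallfrown n}^* - y_s^*) + r_k(t_{s^\smallfrown n} - t_s) y_{s^\smallfrown n}^*
\]
transfers the $\tfrac{3\varepsilon}{2}$-separation of $(r_k t_s y_s^*)$ into a strictly greater than $\varepsilon$ separation of $(y_s^*)$. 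Lemma~\ref{treeCharSzlenk}(ii) then places $y_0^* \in [\sigma]^{\omega^n}_{\varepsilon'}(A_k)$ for some $\varepsilon' > \varepsilon$, and the right-hand inclusion of Lemma~\ref{elementary} yields $y_0^* \in [\sigma^{\omega^n}]'_\varepsilon(A_k) = A_{k+1}$. Hence $x^* \in r_k[-1,1]A_{k+1} = (1-\delta)\,r_{k+1}[-1,1]A_{k+1} \subset (1-\delta)B$.

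\textbf{Main obstacle.} The delicate point is the sub-tree extraction in the second case: one must stay inside the prescribed class $\mathcal T_{\omega^n}$ while forcing the scalar factors $t_s$ to converge quickly enough along branches yet remain bounded below by $\tfrac12$, so that the error term $r_k(t_{s^\smallfrown n}-t_s) y_{s^\smallfrown n}^*$ does not absorb the separation of the $x^*$-tree. Lemma~\ref{extract} is exactly the tool designed for this kind of scalar-vs-derivation bookkeeping. The numerical choices---the $4\varepsilon$ tolerance, $\delta = \tfrac{1}{32(m+1)}$ and $r_k = (1-\delta)^{-k}$---are calibrated so that the cumulative losses (the $\tfrac12$-factor of Lemma~\ref{treeCharSzlenk}(i), the strict inequality needed in Lemma~\ref{elementary}, and the uniform bound $r_{m-1} < e^{1/32}$) still leave strict room for $\varepsilon' > \varepsilon$ at the end.
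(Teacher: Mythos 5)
Your proposal correctly identifies the basic ingredients (Lemmas~\ref{treeCharSzlenk} and~\ref{extract}, a shell decomposition, the commutation of the fragment derivation with finite unions), and it correctly identifies where the difficulty lies. But the step you flag under ``Main obstacle'' is a genuine gap, not a bookkeeping detail, and your construction of $B$ does not give you the tool to close it. Specifically, the assertion ``after a further refinement ensuring $|t_s|\ge\tfrac12$ at every node'' cannot be justified for $B=\bigcup_{k}r_k[-1,1]A_k$. Writing $x^*_s=r_k t_s y^*_s$ with $t_s$ maximal, i.e.\ $t_s=g_k(x^*_s)^{-1}$ where $g_k$ is the Minkowski functional of $r_k[-1,1]A_k$, weak$^*$-lower semicontinuity of $g_k$ only forces $\limsup_n t_{s^\smallfrown n}\le t_s$ along branches; there is nothing that prevents the $t_s$'s from drifting toward $0$, and when they do, the error term $r_k(t_{s^\smallfrown n}-t_s)y^*_{s^\smallfrown n}$ swallows the separation and, worse, the tree $(y^*_s)$ ceases to be weak$^*$-continuous. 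Lemma~\ref{extract} is indeed the right tool, but it explicitly requires the scalars $\lambda_s=1/(r_kt_s)$ to lie in a fixed compact interval $[a,b]\subset(0,\infty)$, which is precisely the hypothesis you cannot verify.

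The paper resolves this by two structural choices that your construction omits. First, each shell is defined as $B_k=\{\lambda x^*:x^*\in[\sigma^{\omega^n}]^k_\varepsilon(A)\cup rA,\ \lambda\in[0,1]\}$ --- the ``$\cup\, rA$'' forces $rB_0\subset B_k\subset B_0$, hence the Minkowski functionals satisfy $f_0\le f_k\le r^{-1}f_0$ and are uniformly comparable, regardless of how rapidly the sets $A_k$ shrink. Second, $B$ is taken to be $\{f\le1\}$ for the convex combination $f=\tfrac12 f_0+\tfrac{r}{2(m+1)}\sum_{k=0}^m f_k$ rather than a union of shells. With this $f$, one can prune the $\tfrac32\varepsilon$-separated tree so that $f(x^*_s)>\tfrac12$ persists along the whole tree, and together with the mutual comparability this yields the two-sided bound $\tfrac{r+r^2}{2}\le f_k(x^*_s)^{-1}\le2$ for all $s$ and $k$ simultaneously --- exactly what Lemma~\ref{extract} needs. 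Your union $B$ has $\min_k g_k$ as its Minkowski functional, so membership in $[\sigma^{\omega^n}]'_{4\varepsilon}(r_k[-1,1]A_k)\setminus(1-\delta)B$ controls only one $g_k$, and the shells give no lower bound on it. You would need to import the fattening by $rA$ and a sum (rather than a min) of Minkowski functionals to make the argument go through; at that point you have essentially reconstructed the paper's proof.
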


\begin{proof}
Considering $-A\cup A$ instead of $A$ and we may assume, without loss of generality, that $A$ is symmetric.
Fix $r \in (0,1)$ such that $3(r+r^2)>4$ (for instance $r=7/8$).
Now define the sets
$$ B_k=
\{ \lambda x^* : x^* \in [\sigma^{\omega^n}]^k_\varepsilon(A) \cup r A,\
\lambda \in [0,1] \} $$
for $k=0,\dots,m$. The sets $B_k$ are clearly symmetric and radial. Using the weak$^*$-compactness of $rA$ and $[\sigma^{\omega^n}]^k_\varepsilon(A)$ it is not difficult to see that they are also weak$^*$-compact. Therefore, we can define the Minkowski functional $f_k$ of $B_k$ which is  weak$^*$ lower semi-continuous. Notice that $f_k \leq f_{k+1}$ and $f_{m} =r^{-1} f_0$. We now define
$$f(x^*)=\frac{1}{2} f_0(x^*) + \frac{r}{2(m+1)} \sum_{k=0}^{m} f_k(x^*)$$
Clearly $f \leq f_0$ and so $A \subset B$ where $B=\{f \leq 1\}$. By construction, $B$ is symmetric and radial. It is also bounded. Then it follows from the weak$^*$ lower semi-continuity of $f$ that $B$ is weak$^*$-compact.\\
Take now $x^* \in [\sigma^{\omega^n}]'_{7\varepsilon}(B)$ and assume as we may that $f(x^*)>\frac12$. By Lemma \ref{elementary} and Lemma \ref{treeCharSzlenk} there exist $T\in {\cal T}_{\omega^n}$ and  $(x^*_s)_{s \in T}$ a weak$^*$-continuous and $\frac{3}{2}\eps$-separated family in $B$ such that $x^*_\emptyset=x^*$. Fix $\nu>0$. First, it follows from the weak$^*$ lower semi-continuity of $f$ and the $f_k$'s that we may assume, by considering a subtree of $T$ belonging to ${\cal T}_{\omega^n}$, that for all $s\in T$, $f(x^*_s)>\frac12$ and for all $s\in T$ and all $k$, $f_k(x^*_s)\ge f_k(x^*)-\nu$. \\
On the other hand, for all $j,k\le m$, $f_j\ge f_0\ge rf_k$. It follows that for all $k\le m$, $f\ge \frac{r+r^2}{2}f_k$ and therefore for all $y^* \in B$, $f_k(y^*)\le \frac{2}{r+r^2}$. Then, we have that
$$\forall s\in T\ \ \forall k\in \{0,...,m\}:\ \ \frac23\le\frac{r+r^2}{2}\le f_k(x^*_s)^{-1}:=\lambda_k^s \le 2.$$
Notice that $\{ \lambda_k^s x^*_s : s \in T\}$ is included in $[\sigma^{\omega^n}]^k_\eps(A) \cup rA$.
Then it follows from Lemma \ref{extract} that for any $k=0,\dots,m-1$, there exists $\lambda_k\ge \frac{r+r^2}{2}$ such that \[\lambda_kx^* \in [\sigma]^{\omega^n}_\eps([\sigma^{\omega^n}]^k_\eps(A) \cup rA)\subset [\sigma^{\omega^n}]'_\eps([\sigma^{\omega^n}]^k_\eps(A) \cup rA)\subset [\sigma^{\omega^n}]^{k+1}_\eps(A) \cup rA.\]
The last inclusion  follows from the fact that $\sigma^{\omega^n}$ is a measure of non-compactness (Lemma \ref{iterate}) and the stability of the associated fragment derivation under finite unions.

Still by Lemma~\ref{extract}, we can find $S\subset T$ such that $S\in {\cal T}_{\omega^n}$ and  $\abs{(\lambda_k^s)^{-1}-\lambda_k^{-1}}<\nu$ for all $s\in S$ and all $k\le m-1$.
Now since $\lambda_k x^* \in B_{k+1}$, we obtain
\[
f_{k+1}(x^*)\le \lambda_k^{-1}\le (\lambda_k^s)^{-1}+\nu=f_k(x^*_s)+\nu
\]
for any $k=0,\dots,m-1$ and any $s\in S$.
We infer that for all $s\in S$
$$f(x^*)\le \frac12 f_0(x^*_s)+\frac{\nu}{2}+\frac{r}{2(m+1)}f_0(x^*)+\frac{r}{2(m+1)} \sum_{k=1}^{m} f_k(x^*)$$
$$\le \frac12 f_0(x^*_s)+\frac{r}{2(m+1)}\sum_{k=0}^{m} f_k(x_s^*)+\frac{r}{2(m+1)}\big(f_0(x^*_s)-f_m(x^*_s)\big)+\frac{\nu (r+1)}2$$
$$=f(x^*_s)-\frac{r}{2(m+1)}\big(\frac{1}{r}-1\big)f_m(x^*_s)+\frac{\nu (r+1)}2\le 1-\frac{1-r}{4(m+1)}+\frac{\nu (r+1)}2.$$
Since $\nu>0$ was arbitrary, we obtain that $f(x^*)\le 1-\frac{1-r}{4(m+1)}$. Applying this last inequality with $r=\frac78$ as we may, we conclude our proof.
\end{proof}

We now state and prove the main result of this section.

\begin{Thm}\label{main} Let $X$ be a separable Banach space and $\sigma$ be the Kuratowski measure of non-compactness on $X^*$. Then, for any $n$ in  ${\Bbb N}$, $\sigma^{\omega^n}$ is convexifiable.
\end{Thm}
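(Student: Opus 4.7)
The plan is to argue by induction on $n \in \Ndb$. The base case $n=0$ is the classical fact that the Kuratowski measure $\sigma$ itself is convexifiable with constant $1$: if $A \subset \bigcup_{i=1}^k B(x_i,\eps/2)$, then $A \subset \mathrm{conv}\{x_1,\ldots,x_k\} + (\eps/2)B_{X^*}$, where the first summand is norm-compact and hence has arbitrarily small $\sigma$-measure, and passing to the weak$^*$-closed convex hull costs nothing since norm-closed balls in $X^*$ are weak$^*$-closed.

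For the inductive step, set $\eta := \sigma^{\omega^n}$, which is convexifiable by the inductive hypothesis (say with constant $\kappa_n$) and sublinear by Corollary \ref{KuraSub}. Given a weak$^*$-compact $A \subset X^*$ with $\eta^\omega(A) < \eps$, the definition of $\eta^\omega$ yields some $m \in \Ndb$ with $[\eta]^m_\eps(A) = \emptyset$. Feeding this into Proposition \ref{radial} produces a symmetric radial weak$^*$-compact $B \supset A$ satisfying $[\eta]'_{4\eps}(B) \subset \lambda B$ for some $\lambda \in (0,1)$, which is exactly the hypothesis of Proposition \ref{convex}. That proposition hands us
\[ \langle \eta \rangle^\omega_{\kappa_n \eps'}(\wsconv(B)) = \emptyset \quad \text{for every } \eps' > 4\eps. \]
Since $\wsconv(A) \subset \wsconv(B)$ and the slice derivation is clearly monotone in its set argument, the same emptiness passes to $\wsconv(A)$.

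The remaining task is to translate this slice statement back into one about $\eta^\omega$. For this I would record the elementary inclusion
\[ [\eta]^\gamma_\delta(C) \subset \langle \eta \rangle^\gamma_\delta(C), \]
valid for every convex weak$^*$-compact $C$ and every ordinal $\gamma$. The case $\gamma=1$ is Lemma \ref{convexhull}; the successor step combines monotonicity of the fragment derivation with the fact, again from Lemma \ref{convexhull}, that every slice iterate of a convex set is again convex; limit ordinals are trivial by intersection. Applied with $C = \wsconv(A)$ and $\gamma = \omega$ this yields $[\eta]^\omega_{\kappa_n \eps'}(\wsconv(A)) = \emptyset$, and therefore $\eta^\omega(\wsconv(A)) \leq \kappa_n \eps'$. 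Letting $\eps' \searrow 4\eps$ and $\eps \searrow \eta^\omega(A)$ gives $\eta^\omega(\wsconv(A)) \leq 4\kappa_n \eta^\omega(A)$, so $\sigma^{\omega^{n+1}}$ is convexifiable with constant $4\kappa_n$.

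The conceptual labor has already been invested in Propositions \ref{radial} and \ref{convex}; the theorem is essentially their assembly. The one delicate point is the passage from $\langle \eta \rangle^\omega$ back to $[\eta]^\omega$, which relies crucially on the fact that it is applied to a convex set, namely $\wsconv(A)$ rather than $A$ itself; this is exactly where Lemma \ref{convexhull} (and the convexity-preservation of slice derivations) enters.
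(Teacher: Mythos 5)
Your proof is correct and follows essentially the same route as the paper: induction on $n$, feeding Proposition \ref{radial} into Proposition \ref{convex}, and then using the inclusion $[\eta]^\gamma_\delta(C)\subset\langle\eta\rangle^\gamma_\delta(C)$ on convex sets (via Lemma \ref{convexhull}) to translate the slice statement back into a fragment statement. The only difference is cosmetic: by letting $\eps'\searrow 4\eps$ rather than fixing $\eps'=5\eps$ you obtain the marginally better recursion $\kappa_{n+1}\le 4\kappa_n$ instead of the paper's $5\kappa_n$.
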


\begin{proof} We can proceed by induction. The claim is true for $n=0$. Indeed, it is easily checked that if a weak$^*$-compact subset $A$ of $X^*$ can be covered by finitely many balls of diameter at most $\eps$, then for any $\delta>0$,  $\overline{\rm conv}^*(A)$ can be covered by finitely many balls of diameter $(1+\delta)\eps$.\\
Assume now that $\sigma^{\omega^n}$ is convexifiable. Denote $\kappa_n$ the convexifiability constant of $\sigma^{\omega^n}$. Let $A$ be a weak$^*$-compact subset of $X^*$ such that $\sigma^{\omega^{n+1}}(A) < \varepsilon$. Then $[\sigma^{\omega^n}]^m_\varepsilon(A)=\emptyset$ for some $m \in {\Bbb N}$. Combining Propositions \ref{radial} and \ref{convex} gives that $[\sigma^{\omega^n}]^\omega_{8\kappa_n\varepsilon}(\overline{conv}^*(A))\subset \langle \sigma^{\omega^n}\rangle^\omega_{8\kappa_n\varepsilon}(\overline{conv}^*(A))=\emptyset$. Therefore $\sigma^{\omega^{n+1}}(\overline{conv}^*(A))\le 8\kappa_n\varepsilon$.
\end{proof}

\begin{Remark} Notice that the constant of convexifiability increases in each step of the induction. It follows from our proof that  $\kappa_n\le 2\cdot8^n$. We do not know if this method can be adapted beyond $\omega^\omega$.
\end{Remark}

We can now compare the Szlenk index and the convex Szlenk index.

\begin{Cor}\label{SzCz} Let $X$ be a separable Banach space.

\smallskip
(1) If $K$ is a weak$^*$-compact convex subset of $X^*$ such that $Sz(K) \le \omega^{n+1}$ for some non negative integer $n$, then $Cz(K)=Sz(K)$.

\smallskip
(2) $Cz(X)=Sz(X)$.
\end{Cor}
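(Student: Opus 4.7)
The plan is to obtain~(1) by climbing the hierarchy $\sigma,\sigma^\omega,\sigma^{\omega^2},\dots,\sigma^{\omega^{n+1}}$ through repeated applications of Proposition~\ref{laststep}, using that each $\sigma^{\omega^k}$ for $k\le n$ is convexifiable by Theorem~\ref{main} and sublinear by Corollary~\ref{KuraSub}. Part~(2) will follow from~(1) applied to $K=B_{X^*}$, together with the H\'ajek--Schlumprecht result from \cite{HajekSchlumprecht2014} cited in the introduction, which covers the regime $Sz(X)\ge\omega^\omega$. First, $Cz(K)\ge Sz(K)$ is automatic, since $[\sigma]'_\eps(K)\subset\langle\sigma\rangle'_\eps(K)$ propagates through every ordinal iteration. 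By Lemma~\ref{formSz}(iii) write $Sz(K)=\omega^\alpha$ with $0\le\alpha\le n+1$; the degenerate case $\alpha=0$ is immediate from Lemma~\ref{convexhull}, since $[\sigma]'_\eps(K)=\emptyset$ for every $\eps>0$ forces $\langle\sigma\rangle'_\eps(K)=\wsconv\,[\sigma]'_\eps(K)=\emptyset$.

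The core step, for $1\le\alpha\le n+1$, is to verify the hypothesis of Proposition~\ref{laststep} for each $\eta=\sigma^{\omega^k}$, $k\in\{0,\dots,\alpha-1\}$. Given a convex weak$^*$-compact $A$ with $[\sigma^{\omega^k}]^\omega_\eps(A)=\emptyset$, weak$^*$-compactness yields some $m$ with $[\sigma^{\omega^k}]^m_\eps(A)=\emptyset$; Proposition~\ref{radial} then produces a symmetric radial weak$^*$-compact $B\supset A$ with $[\sigma^{\omega^k}]'_{4\eps}(B)\subset(1-\delta_m)B$; and Proposition~\ref{convex}, together with the convexifiability constant $\kappa_k$ from Theorem~\ref{main}, gives $\langle\sigma^{\omega^k}\rangle^\omega_{5\kappa_k\eps}(\wsconv B)=\emptyset$. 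Monotonicity of $\langle\cdot\rangle$ in its set argument transfers this bound to $A$, establishing the hypothesis with $\theta_k=5\kappa_k$. Proposition~\ref{laststep} then delivers $\langle\sigma^{\omega^k}\rangle^{\omega\cdot\beta}_{\lambda_k\eps}(A)\subset\langle\sigma^{\omega^{k+1}}\rangle^\beta_\eps(A)$ on convex weak$^*$-compact $A$, for every ordinal $\beta$ and every $\lambda_k>\theta_k$. Choosing $\beta=\omega^{\alpha-k-1}$ and chaining these inclusions from $k=0$ up to $k=\alpha-1$ yields a finite constant $C=C(\alpha)>0$ with
\[
\langle\sigma\rangle^{\omega^\alpha}_{C\eps}(A)\ \subset\ \langle\sigma^{\omega^\alpha}\rangle'_\eps(A)
\]
for every convex weak$^*$-compact $A\subset X^*$.

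To conclude~(1), I note that $Sz(K)\le\omega^\alpha$ combined with Lemma~\ref{elementary} and weak$^*$-compactness forces $\sigma^{\omega^\alpha}(K)=0$: the emptiness of $[\sigma]^{\omega^\alpha}_\eps(K)=\bigcap_m[\sigma]^{\omega^{\alpha-1}\cdot m}_\eps(K)$ forces some $[\sigma]^{\omega^{\alpha-1}\cdot m}_\eps(K)=\emptyset$, which via Lemma~\ref{elementary} gives $[\sigma^{\omega^{\alpha-1}}]^m_{\eps'}(K)=\emptyset$ for all $\eps'>\eps$. Hence $\langle\sigma^{\omega^\alpha}\rangle'_\eps(K)=\emptyset$ for every $\eps>0$, and the displayed inclusion gives $\langle\sigma\rangle^{\omega^\alpha}_\delta(K)=\emptyset$ for every $\delta>0$, so $Cz(K)\le\omega^\alpha=Sz(K)$. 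For~(2), write $Sz(X)=\omega^\alpha$ via Lemma~\ref{formSz}(iii): if $\alpha<\omega$ part~(1) applied to $B_{X^*}$ concludes, while $\alpha\ge\omega$ is settled by \cite{HajekSchlumprecht2014}. The main obstacle is controlling the cascade of constants: $C(\alpha)=\prod_{k<\alpha}\lambda_k$ with $\lambda_k>5\kappa_k$ and $\kappa_k\le 2\cdot 5^k$ by the remark after Theorem~\ref{main}, so $C(\alpha)<\infty$ exactly when $\alpha<\omega$; this is precisely why~(1) is capped at $\omega^{n+1}$ and why~(2) must delegate the $\alpha\ge\omega$ tail to H\'ajek--Schlumprecht.
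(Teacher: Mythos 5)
Your proof is correct and takes essentially the same route as the paper: verify the hypothesis of Proposition~\ref{laststep} for each $\sigma^{\omega^k}$ by combining Propositions~\ref{radial} and~\ref{convex} with Theorem~\ref{main}, chain the resulting inclusions down the hierarchy to reduce $\langle\sigma\rangle^{\omega^\alpha}$ to $\langle\sigma^{\omega^\alpha}\rangle'$, and delegate the $\alpha\ge\omega$ case of~(2) to H\'ajek--Schlumprecht. The only cosmetic difference is that you track the constant slightly more carefully ($\theta_k=5\kappa_k$ rather than the paper's stated $\theta=\kappa_n$), but the logical skeleton and the key lemmas invoked are identical.
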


\begin{proof} (1) Note first that it follows from Proposition \ref{radial}, Theorem \ref{main} and Proposition \ref{convex} that $\sigma^{\omega^n}$ satisfies the assumptions of Proposition \ref{laststep} with $\theta=8\kappa_n$.\\
Assume now that $K$ is a weak$^*$-compact convex subset of $X^*$ such that $Sz(K) \le \omega^{n+1}$. By Lemma \ref{formSz}, it is enough to show that $Cz(K)\le \omega^{n+1}$. It follows from Lemma \ref{elementary} that for any $\eps>0$,  $[\sigma^{\omega^n}]^\omega_\eps(K)=\emptyset$. Pick now $\lambda_n>8\kappa_n$,...,$\lambda_0>8\kappa_0$. By our initial remark, we obtain that for any $\eps>0$, $\langle \sigma^{\omega^n}\rangle^\omega_{\lambda_n\eps}(K)=\emptyset$. Then applying Proposition \ref{laststep} to $\sigma^{\omega^{n-1}},...,\sigma$ successively implies that for any $\eps>0$, $\langle \sigma \rangle^{\omega^{n+1}}_{\alpha \eps}(K)=\emptyset$, with $\alpha=\lambda_0..\lambda_n$. We have proved that $Kz(K)\le \omega^{n+1}$, or equivalently that $Cz(K)\le \omega^{n+1}$.

\smallskip (2) We may assume that $Sz(X)<\infty$ and, by Lemma \ref{formSz} it is enough to show that $Cz(X)= \omega^{\alpha}$ whenever  $Sz(X)= \omega^{\alpha}$, with $\alpha$ countable ordinal.

Assume first that $\alpha\ge \omega$. We need to introduce a new derivation. For a weak$^*$-compact convex subset $K$ of $X^*$ and $\eps>0$, we define $d_\eps'(K)$ to be the set of all $x^*\in K$ such that for any weak$^*$-open halfspace $H$ of $X^*$ containing $x^*$, the diameter of $K\cap \overline{H}^*$ is at least $\eps$. Then $d_\eps^\alpha(K)$ is defined inductively for $\alpha$ ordinal as usual, $Dz(K,\eps)=\inf\{\alpha,\ d_\eps^\alpha(K)=\emptyset\}$ if it exists (and $=\infty$ otherwise) and $Dz(K)=\sup_{\eps>0}Dz(K,\eps)$. Finally $Dz(X):=Dz(B_{X^*})$.\\
It is clear that for any  weak$^*$-compact convex subset $K$ of $X^*$ and any $\eps>0$, $s'_\eps(K)\subset d_\eps'(K)$. Since $d_\eps'(K)$ is weak$^*$-compact and convex, we have that the weak$^*$-closed convex hull of $s'_\eps(K)$ is included in $d_\eps'(K)$. Then an easy induction combined with (\ref{s-sigma}) yields that $Cz(K)\le Dz(K)$.\\
We now need to recall an important recent result of P.~H\'{a}jek and T.~Schlumprecht who proved in \cite{HajekSchlumprecht2014} that if $Sz(X)=\omega^\alpha$ with $\alpha\in [\omega,\omega_1)$, then $Sz(X)=Dz(X)$. Since we always have $Sz(X)\le Cz(X)\le Dz(X)$, it follows that $Cz(X)=Sz(X)$, whenever $X$ is Banach space such that $Sz(X)=\omega^\alpha$ with $\alpha\in [\omega,\omega_1)$.

Assume now that $Sz(X)=\omega^n$, with $n\in \Ndb$. By applying (1) to $K=B_{X^*}$, we get that $Sz(X)=Cz(X)$.
\end{proof}

\begin{Remark} Let us emphasize again the fact that the equality of the Szlenk index and the convex Szlenk index of a Banach space is a direct consequence of the work of H\'{a}jek and Schlumprecht, except for spaces with Szlenk index $\omega^n$ with $n$ finite. The main result of this section fills this gap.

It is worth mentioning that H\'{a}jek and Schlumprecht also proved in \cite{HajekSchlumprecht2014} that if $Sz(X)=\omega^n$ with $n$ finite, then $Dz(X)\le \omega^{n+1}$ and that this result is optimal (see \cite{HajekLancienProchazka2009}).

\end{Remark}

Our comparison extends to the non-separable setting as follows.

\begin{Cor}
Let $X$ be a Banach space such that $Sz(X)<\omega_1$, where $\omega_1$ is the first uncountable ordinal.  Then $Cz(X)=Sz(X)$.
\end{Cor}

\begin{proof} Let $\alpha=Sz(X)<\omega_1$. Then for any separable subspace $Y$ of $X$, $Sz(Y)\le \alpha$. Thus, Corollary \ref{SzCz} ensures that for any separable subspace $Y$ of $X$, $Cz(Y)=Sz(Y)\le \alpha$. Then, since $\alpha$ is countable, it follows from the techniques developed in \cite{Lancien1996} to show the separable determination of such indices (see Propositions 3.1 and 3.2) that $Cz(X)\le \alpha=Sz(X)$, which concludes the proof.
\end{proof}

\section{Renorming spaces and Szlenk index}\label{norm}

The aim of this section is to generalize the following theorem due to Knaust, Odell and Schlumprecht \cite{KnaustOdellSchlumprecht1999} (see \cite{GKL2001} for quantitative improvements  and \cite{Raja2010} for the extension to the non-separable case).

\begin{Thm} Let $X$ be a separable Banach space such that $Sz(X)\le \omega$. Then $X$ admits an equivalent norm, whose dual norm satisfies the following property: for any $\eps>0$ there exists $\delta>0$ such that $[\sigma]'_\eps(B_{X^*}) \subset (1-\delta)B_{X^*}$.
\end{Thm}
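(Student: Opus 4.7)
The plan is to construct the desired equivalent norm on $X$ as an Asplund-type $\ell_2$ average of dual norms $\mathcal N_k^*$, where each $\mathcal N_k^*$ individually delivers the $w^*$-UKK$^*$ property at a single scale $\eps_k=2^{-k}$.

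\emph{Single-scale construction.} Fix $\eps>0$. The assumption $Sz(X)\le\omega$ gives $m=m(\eps)\in\Ndb$ with $[\sigma]^{m}_\eps(B_{X^*})=\emptyset$. Proposition \ref{radial} (applied in the trivial case $n=0$, where $\sigma^{\omega^n}$ is just $\sigma$) supplies a symmetric radial weak$^*$-compact set $B_\eps$, with $B_{X^*}\subset B_\eps\subset\rho_\eps B_{X^*}$, satisfying
\[
[\sigma]'_{4\eps}(B_\eps)\subset\bigl(1-\tfrac{1}{32(m+1)}\bigr)B_\eps.
\]
Since $\sigma$ is convexifiable with constant arbitrarily close to $1$ (the base case of the induction proving Theorem \ref{main}), inspection of the proof of Proposition \ref{convex} yields, for every $\eps'>4\eps$, a constant $\xi_\eps\in(0,1)$ such that the weak$^*$-closed convex hull $K_\eps:=\wsconv(B_\eps)$ satisfies
\[
\langle\sigma\rangle'_{\eps'}(K_\eps)\subset\xi_\eps K_\eps.
\]
The set $K_\eps$ is weak$^*$-compact, convex and symmetric with $B_{X^*}\subset K_\eps\subset\rho_\eps B_{X^*}$, so it is the dual unit ball of an equivalent norm $\mathcal N_\eps^*$ on $X^*$. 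Because $[\sigma]'_{\eps'}\subset\langle\sigma\rangle'_{\eps'}$, this makes $\mathcal N_\eps^*$ into a $w^*$-UKK$^*$ dual norm \emph{at the single scale $\eps'$} with rate $1-\xi_\eps$.

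\emph{Averaging.} Choose $\eps_k=2^{-k}$ and denote $K_k$, $\mathcal N_k^*$, $\rho_k$, $\xi_k$ the corresponding objects. Pick positive weights $a_k$ with $\sum_k a_k\rho_k^2<\infty$ and define the weak$^*$-lower semi-continuous equivalent norm on $X^*$
\[
\tnorm{x^*}^2\ :=\ \|x^*\|^2+\sum_{k=1}^\infty a_k\,\mathcal N_k^*(x^*)^2,
\]
which is consequently the dual of an equivalent norm on $X$; let $\tilde B:=\{\tnorm{\cdot}\le1\}$. To verify $w^*$-UKK$^*$: given $\eps>0$, fix $k_0$ so large that an $\eps/2$-separation in $\|\cdot\|$ forces a separation well above $\eps'_{k_0}:=8\eps_{k_0}$ after rescaling to $K_{k_0}$. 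For $x^*\in[\sigma]'_\eps(\tilde B)$, Lemma \ref{treeCharSzlenk} extracts $y_n^*\in\tilde B$ with $y_n^*\wstoo x^*$ and $\|y_n^*-x^*\|\ge\eps/2$. Setting $r:=\liminf_n\mathcal N_{k_0}^*(y_n^*)$ and passing to a subsequence, the single-scale inclusion $\langle\sigma\rangle'_{\eps'_{k_0}}(K_{k_0})\subset\xi_{k_0}K_{k_0}$ (applied to the rescaled sequence $y_n^*/r$) yields $\mathcal N_{k_0}^*(x^*)\le\xi_{k_0}r$. Substituting into the definition of $\tnorm{\cdot}^2$ and invoking Fatou on the remaining terms gives
\[
\tnorm{x^*}^2\le 1-a_{k_0}(1-\xi_{k_0}^2)r^2\le 1-\delta(\eps),
\]
for a $\delta(\eps)>0$, as long as $r$ is bounded below.

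The heart of the matter is precisely this last step: one must guarantee that the loss from the $k_0$-th component is not washed out by the liminf on all the others. This is the classical Asplund-averaging obstruction, and it is resolved by (a) choosing the weights $a_k$ so that $\sum_k a_k\rho_k^2<\infty$ while $a_k(1-\xi_k^2)$ stays substantial, and (b) a case analysis on $r$: if $r$ happens to be too small, then the separation of $y_n^*$ from $x^*$ must manifest itself at a different component $k_1\neq k_0$, and a symmetric argument at $k_1$ gives the same conclusion. Propositions \ref{radial} and \ref{convex}, combined with the $n=0$ instance of Theorem \ref{main}, supply exactly the quantitative single-scale ingredients (the rates $\xi_k$ and equivalence constants $\rho_k$) that this averaging scheme demands.
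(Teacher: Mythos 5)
Your proposal takes a genuinely different construction from the paper's. The paper establishes this statement as the $\alpha=0$ case of Theorem~\ref{renorming}, whose proof adds to $\|\cdot\|$ a weighted sum of \emph{distance functions} to the iterated convex derived sets $A_n^k$; you instead take an $\ell_2$-average of Minkowski functionals of single-scale bodies $K_{\eps_k}$ built from Propositions~\ref{radial} and~\ref{convex}. Both routes rest on the same single-scale ingredients and on Lemma~\ref{treeCharSzlenk}, and both descend from \cite{Lancien1995}; they differ in how the countably many scales are glued together. Your route has a genuine advantage at level $\alpha=0$: one only needs the height-one case of Lemma~\ref{treeCharSzlenk} (a single weak$^*$-convergent separated sequence), not the transfinite machinery of Lemmas~\ref{l:BolzanoWeierstrass} and~\ref{l:TreeTransport}. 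It is less clear how to push it to $\alpha\ge1$, where the paper's distance-function construction is the one that scales.

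The real gap, though, is not where you locate it. The feared ``Asplund-averaging obstruction'' — that $r=\liminf_n\mathcal N_{k_0}^*(y_n^*)$ could be too small — is a red herring. Since $\mathcal N_{k_0}^*$ is weak$^*$-lower semicontinuous and $y_n^*\wstoo x^*$, one has $r\ge\mathcal N_{k_0}^*(x^*)\ge\|x^*\|/\rho_{k_0}$, and $\|x^*\|$ is bounded below by norm equivalence once $\tnorm{x^*}\ge1/2$ (the only case worth treating); also $\mathcal N_{k_0}^*\le\|\cdot\|\le\tnorm{\cdot}$ forces $r\le1$, so $r$ lies in a fixed interval $[c,1]$ and no case distinction on $r$ is needed. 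The step you actually leave unjustified is the key inequality $\mathcal N_{k_0}^*(x^*)\le\xi_{k_0}r$. To obtain it one should: rescale to $z_n^*:=y_n^*/\mathcal N_{k_0}^*(y_n^*)\in K_{k_0}$, note that (after passing to a subsequence) $z_n^*\wstoo x^*/r$ with $\liminf_n\|z_n^*-x^*/r\|\ge\eps/(2r)\ge\eps/2$, apply Lemma~\ref{treeCharSzlenk}(ii) with $\alpha=1$ to place $x^*/r$ in $[\sigma]'_{\eps/2}(K_{k_0})$, and then chain $[\sigma]'\subset\langle\sigma\rangle'$ with the single-step slice inclusion established inside the proof of Proposition~\ref{convex}, choosing $k_0$ so that the threshold $\eps'_{k_0}$ is below $\eps/2$. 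As written you assert this conclusion and then spend your effort resolving a non-issue. Once the chain above is filled in, the Fatou computation you indicate yields $\tnorm{x^*}^2\le1-a_{k_0}(1-\xi_{k_0}^2)c^2$, a uniform drop, and the construction does succeed.
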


Note, that  an easy homogeneity argument shows that the converse of this statement is clearly true. A dual norm satisfying the conclusion of the above theorem is said to be \emph{weak$^*$ uniformly Kadets-Klee} (in short UKK$^*$). We now introduce the following analogous definition.

\begin{Def} Let $X$ be a Banach space and $\alpha \in [0,\omega_1)$ an ordinal. The dual norm on $X^*$ is \emph{$\omega^{\alpha}$-weak$^*$ uniformly Kadets-Klee} (in short $\omega^\alpha$-UKK$^*$) if for any $\eps>0$ there exists $\delta>0$ such that $[\sigma]^{\omega^\alpha}_\eps(B_{X^*}) \subset (1-\delta)B_{X^*}$.
\end{Def}

Our main renorming result is the following.

\begin{Thm}\label{renorming} Let $X$ be a separable Banach space. Then $Sz(X)\le \omega^{\alpha+1}$ if and only if $X$ admits an equivalent norm whose dual norm is $\omega^\alpha$-UKK$^*$.
\end{Thm}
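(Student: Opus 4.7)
My plan is to establish the two implications of Theorem~\ref{renorming} separately, treating in detail the case $\alpha=n$ a non-negative integer, where all the technology built in Sections~\ref{convexifiable}--\ref{kuratowski} is directly applicable; the general countable $\alpha$ is handled by an analogous construction, either by extending the convexifiability theorem to higher levels or via the H\'ajek--Schlumprecht identity $Sz(X)=Dz(X)$ for $\alpha\ge\omega$.

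The sufficiency is routine. If the dual unit ball $B$ satisfies $[\sigma]^{\omega^\alpha}_\eps(B)\subset(1-\delta)B$ with $\delta=\delta(\eps)>0$, then by homogeneity $[\sigma]^{\omega^\alpha}_\eps(\lambda B)=\lambda[\sigma]^{\omega^\alpha}_{\eps/\lambda}(B)$ and monotonicity in $\eps$, iteration yields $[\sigma]^{\omega^\alpha\cdot k}_\eps(B)\subset(1-\delta)^k B$ for every $k$. Because $B$ is bounded in the original norm, $(1-\delta)^k B$ has $\sigma$-measure less than $\eps$ once $k$ is large, so the next fragment derivation is empty and $Sz(X,\eps)<\omega^{\alpha+1}$ for every $\eps>0$.

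For the necessity, assume $Sz(X)\le\omega^{n+1}$. Since $[\sigma]^{\omega^{n+1}}_\eps(B_{X^*})=\bigcap_m[\sigma]^{\omega^n\cdot m}_\eps(B_{X^*})=\emptyset$, there exists $m=m(\eps)$ with $[\sigma]^{\omega^n\cdot m}_\eps(B_{X^*})=\emptyset$, and an iteration of Lemma~\ref{elementary} upgrades this to $[\sigma^{\omega^n}]^{m(\eps)}_{\eps'}(B_{X^*})=\emptyset$ for every $\eps'>\eps$. The central step is then, for each $\eps>0$, to produce a symmetric, weak$^*$-compact \emph{convex} set $\tilde B_\eps$ with $B_{X^*}\subset\tilde B_\eps\subset\beta B_{X^*}$ (for some $\beta$ independent of $\eps$) such that
\[
[\sigma]^{\omega^n}_{C\eps}(\tilde B_\eps)\subset\xi_\eps\tilde B_\eps\qquad\text{for some }\xi_\eps\in(0,1)\text{ and universal }C.
\]
I would chain three ingredients: Proposition~\ref{radial} delivers a symmetric radial (non-convex) $B_\eps$ with $[\sigma^{\omega^n}]'_{4\eps'}(B_\eps)\subset(1-c)B_\eps$; invoking the convexifiability of $\sigma^{\omega^n}$ (Theorem~\ref{main}), the intermediate estimate inside the proof of Proposition~\ref{convex} shows that $\tilde B_\eps:=\wsconv B_\eps$ satisfies $\langle\sigma^{\omega^n}\rangle'_{\kappa_n\cdot 4\eps''}(\tilde B_\eps)\subset\xi_\eps\tilde B_\eps$; the desired fragment inclusion then follows from $[\sigma]^{\omega^n}_\bullet\subset[\sigma^{\omega^n}]'_\bullet$ (Lemma~\ref{elementary}) together with $[\sigma^{\omega^n}]'_\bullet(\tilde B_\eps)\subset\langle\sigma^{\omega^n}\rangle'_\bullet(\tilde B_\eps)$, valid for the convex set $\tilde B_\eps$ by Lemma~\ref{convexhull}.

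With these sets in hand, a standard Minkowski-sum renorming concludes: pick $\eps_j\searrow 0$, let $\rho_j$ denote the weak$^*$-lower semicontinuous Minkowski functional of $\tilde B_{\eps_j}$, and set $|||x^*|||^2=\|x^*\|^2+\sum_{j\ge 1} 2^{-j}\rho_j(x^*)^2$. This is an equivalent dual norm, and the uniform shrinkage $[\sigma]^{\omega^n}_{C\eps_j}(\tilde B_{\eps_j})\subset\xi_j\tilde B_{\eps_j}$ in a single well-chosen summand forces a uniform $\delta=\delta(\eps)$ for which the new unit ball $B'$ satisfies $[\sigma]^{\omega^n}_\eps(B')\subset(1-\delta)B'$, the remaining summands being unable to compensate. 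The principal obstacle throughout is the convexification step: Proposition~\ref{radial} alone delivers only a radial set, which supports no equivalent norm; the passage to $\wsconv B_\eps$ without losing the shrinkage inclusion is where the convexifiability of $\sigma^{\omega^n}$ (Theorem~\ref{main}) is indispensable.
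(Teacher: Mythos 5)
Your sufficiency direction and your use of Propositions~\ref{radial} and~\ref{convex}, Theorem~\ref{main}, Lemma~\ref{elementary} and Lemma~\ref{convexhull} to produce, for each $\eps>0$, a symmetric weak$^*$-compact \emph{convex} set $\tilde B_\eps$ with $B_{X^*}\subset\tilde B_\eps\subset 2B_{X^*}$ and $[\sigma]^{\omega^n}_{C\eps}(\tilde B_\eps)\subset\xi_\eps\tilde B_\eps$ is all correct, and it is in the spirit of the paper (the paper repackages this as the statement $Cz(X)\le\omega^{\alpha+1}$ of Corollary~\ref{SzCz}, which also covers $\alpha\ge\omega$ via H\'ajek--Schlumprecht --- note that the convexifiability route is explicitly \emph{not} known to extend past $\omega^\omega$, so ``extending the convexifiability theorem to higher levels'' is not an available alternative).

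However, the final renorming step is a genuine gap. The averaged norm $|||x^*|||^2=\|x^*\|^2+\sum 2^{-j}\rho_j(x^*)^2$ does \emph{not} inherit the $\omega^\alpha$-UKK$^*$ property from the individual shrinkages. The shrinkage $[\sigma]^{\omega^n}_{C\eps_j}(\tilde B_{\eps_j})\subset\xi_j\tilde B_{\eps_j}$ tells you that the root $x^*$ of any $\eps$-separated tree in $B'$ has $\rho_j(x^*)\le\xi_j$; but for a generic point of $B'$ with $|||\cdot|||\approx 1$, the value $\rho_j(\cdot)$ can be anywhere in $[0,1]$, so this upper bound is no constraint at all and does not force $|||x^*|||\le 1-\delta$. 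Unlike the slice derivation, the fragment derivation does not interact transparently with Minkowski functionals: the needed decrease must be extracted by comparing $x^*$ with the \emph{tree elements} $x^*_s$, and the Minkowski functionals give no such comparison (weak$^*$-lower semicontinuity only says $\rho_j(x^*)\le\liminf\rho_j(x^*_s)$, which produces nothing). What makes the paper's proof work is that it uses \emph{distance functions} to the convex Szlenk-derived sets $A^k_n$ rather than Minkowski functionals, because these distances obey a cascade along a separated tree (Lemma~\ref{l:TreeTransport}: if the tree elements are $a$-close to $A^k_l$, the root is $a$-close to $A^k_{l+1}$); combined with the emptiness of $A^k_{N_k+1}$ this yields Claim~\ref{gap}, a genuine drop $\dist(x^*,A^k_l)<d^k_l-\gamma$ for some $l$, which is what translates into $f(x^*)\le f(x^*_s)-\delta$. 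There is no analogue of this descent in your proposal, and this cascade --- not the convexification --- is the crux of the proof.
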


It is clear that a Banach space $X$ with a dual $\omega^\alpha$-UKK$^*$ norm satisfies $Sz(X)\le \omega^{\alpha+1}$. So we shall concentrate on the other implication. Before our proof we need a few technical lemmas and definitions.

\begin{Lem}\label{l:TreeTransport}
Let $0\le \alpha<\omega_1$, $0<2a<b$ and $T \in {\cal T}_\alpha$. Assume that $A \subset B \subset X^*$ are two weak$^*$-compact sets and that $(x^*_s)_{s \in T} \subset B$ is a $b$-separated, weak$^*$-continuous family such that  $\dist(x^*_s,A)< a$ for all $s\in T$. Then there exists $S \in {\cal T}_\alpha$, $S \subset T$ and a weak$^*$-continuous and $(b-2a)$-separated family $(y^*_s)_{s\in S} \subset A$ such that
$\|x^*_\emptyset - y^*_\emptyset\|\leq a$.
\end{Lem}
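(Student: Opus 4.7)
The plan is a transfinite induction on $\alpha<\omega_1$, using the recursive description of trees in $\cal T_\alpha$. The root mechanism throughout is: given the sequence $(x^*_{n_k})$ of weak$^*$-continuous children of $x^*_\emptyset$ in $T$ and the nearby approximants $y^*_{n_k} \in A$ produced by the induction hypothesis, extract a subsequence along which $(y^*_{n_k})$ is weak$^*$-convergent, let $y^*_\emptyset$ be its weak$^*$-limit, and use the weak$^*$-lower semi-continuity of the norm to control $\|x^*_\emptyset-y^*_\emptyset\|$.

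The base case $\alpha=0$ is trivial: $T=\{\emptyset\}$, and by definition of the distance we can choose $y^*_\emptyset\in A$ with $\|x^*_\emptyset-y^*_\emptyset\|\le a$.

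For the inductive step, write $T=\{\emptyset\}\cup\bigcup_{k} (n_k)^\smallfrown T_k$ with $T_k\in\cal T_{\alpha_k}$, where $\alpha_k=\beta$ in the successor case $\alpha=\beta+1$ and $\alpha_k\nearrow\alpha$ in the limit case. For each $k$, the subfamily $(x^*_{n_k^\smallfrown t})_{t\in T_k}$ is itself $b$-separated, weak$^*$-continuous, and at distance less than $a$ from $A$, so the induction hypothesis yields $S_k\subset T_k$ with $S_k\in\cal T_{\alpha_k}$ and a weak$^*$-continuous, $(b-2a)$-separated family $(y^*_{n_k^\smallfrown s})_{s\in S_k}\subset A$ with $\|x^*_{n_k}-y^*_{n_k}\|\le a$. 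Because the $y^*_{n_k}$ lie in the bounded weak$^*$-compact set $A$, we may pass to a subsequence (which I continue to index by $k$ after relabeling, keeping $n_k\nearrow\infty$) along which $y^*_{n_k}\wstoo y^*_\emptyset$ for some $y^*_\emptyset\in A$. Setting $S=\{\emptyset\}\cup\bigcup_k (n_k)^\smallfrown S_k$ gives $S\in\cal T_\alpha$, $S\subset T$, and a weak$^*$-continuous family $(y^*_s)_{s\in S}\subset A$.

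It remains to verify the root estimates. Since $x^*_{n_k}-y^*_{n_k}\wstoo x^*_\emptyset-y^*_\emptyset$ and $\|x^*_{n_k}-y^*_{n_k}\|\le a$, weak$^*$-lower semi-continuity of the norm yields
\[
\|x^*_\emptyset-y^*_\emptyset\|\le\liminf_k \|x^*_{n_k}-y^*_{n_k}\|\le a.
\]
Similarly, for each $k$, $x^*_\emptyset-x^*_{n_k}-(y^*_\emptyset-y^*_{n_k})$ is a weak$^*$-limit (in $j$) of elements whose norm is eventually at least $b-2a$ by the triangle inequality $\|x^*_\emptyset-x^*_{n_k}\|\ge b$ combined with the two bounds $\|x^*_\emptyset-y^*_\emptyset\|\le a$ and $\|x^*_{n_k}-y^*_{n_k}\|\le a$, giving $\|y^*_\emptyset-y^*_{n_k}\|\ge b-2a$; combined with the $(b-2a)$-separation within each branch provided by induction, this establishes $(b-2a)$-separation of $(y^*_s)_{s\in S}$.

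The only delicate point is bookkeeping: one must be sure that after the subsequence extraction the resulting index set $\{n_k\}$ still tends to infinity (which is preserved by any cofinal subsequence) and that in the limit-ordinal case the sequence $\alpha_k\nearrow\alpha$ survives, both of which are automatic after relabeling. The main conceptual work is the weak$^*$-lower semi-continuity argument at the root together with the weak$^*$-compactness extraction; once this template is in place the induction runs uniformly in the successor and limit cases.
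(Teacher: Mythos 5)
Your proof is correct and follows essentially the same route as the paper's: the same recursive decomposition of $T$, application of the induction hypothesis branchwise, extraction of a weak$^*$-convergent subsequence of the branch roots, weak$^*$-lower semi-continuity of the norm to get $\|x^*_\emptyset-y^*_\emptyset\|\le a$, and the triangle inequality to get the $(b-2a)$-separation at the root. (The aside about ``$x^*_\emptyset-x^*_{n_k}-(y^*_\emptyset-y^*_{n_k})$ is a weak$^*$-limit in $j$'' is garbled and unnecessary—the plain triangle inequality you invoke right after is all that is needed—but it does not affect correctness.)
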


\begin{proof}
The proof goes by induction on $\alpha$. The claim is clear when $\alpha=0$ so let us assume that we have proved our assertion for every $\beta<\alpha$. Then, there is a sequence $(\alpha_k)$ of  ordinals in $[1,\alpha)$ (with $\alpha_k \nearrow \alpha$ if $\alpha$ is a limit ordinal, $\alpha_k+1=\alpha$ if $\alpha$ is a successor ordinal), and a sequence $(n_k)$ in $\Natural$ such that
\[
T=\set{\emptyset}\cup  \bigcup_{k \in \Natural} \set{(n_k)^\smallfrown s: s \in T_{k}},
\]
where $T_k\in {\cal T}_{\alpha_k}$ for all $k\in \Ndb$. It follows from our induction hypothesis, that for each $k\in \Ndb$ there exist a tree $S_k \in
{\cal T}_{\alpha_k}$, $S_k \subset T_k$, and a weak$^*$-continuous and $(b-2a)$-separated family that we denote $(y^*_{(n_k)\smallfrown s})_{s\in S_k} \subset A$ such that the roots $y^*_{(n_k)}$ of these families
satisfy $\|y^*_{(n_k)}-x^*_{(n_k)}\|\leq a$. By passing to a subsequence, we may assume that the roots $y^*_{(n_k)}$ of these families are such that $y^*_{(n_k)} \wstoo y^*_\emptyset$. Then $\|y^*_\emptyset-x^*_\emptyset\|\leq a$ and $\|y^*_{(n_k)}-y^*_\emptyset\| \geq b-2a$. Finally, the tree
\[
S=\set{\emptyset}\cup \bigcup_{k \in \Natural} \set{(n_k)^\smallfrown s: s \in S_{k}},
\]
belongs to ${\cal T}_\alpha$ and $(y^*_s)_{s\in S}$ satisfies the desired properties.
\end{proof}

We shall now define inductively the class  ${\cal L}_\alpha(T)$ of ``converging'' real valued functions on a given tree $T$ in ${\cal T}_\alpha$ and their ``limit'' along $T$.
\begin{Def} For $T \in {\cal T}_0$ and $r:T \to \Real$ we put $\lim_T r:=\lim_{s \in T} r(s):=r(\emptyset)$. We define ${\cal L}_0(T)=\Real^T$.
Let now $\alpha \in [1,\omega_1)$ and assume that the class ${\cal L}_\beta(T)$ has been defined for all $\beta<\alpha$ and all $T\in {\cal T}_\beta$. Assume also that  for all $T\in {\cal T}_\beta$ and all $r\in {\cal L}_\beta(T)$, $\lim_{s\in T} r_s$ has been defined.
Consider now $T \in {\cal T}_\alpha$ and $r:T \to \Real$.  Then
\[
 T=\set{\emptyset}\cup \bigcup (n_k)^\smallfrown T_k
\]
with $\alpha_k \nearrow \alpha$ if $\alpha$ is a limit ordinal, $\alpha_k+1=\alpha$    if $\alpha$ is a successor ordinal and for all $k\in \Ndb$, $T_k\in {\cal T}_{\alpha_k}$. We say that $r \in {\cal L}_\alpha(T)$ if for all $k\in \Ndb$,  $r\restricted_{T_k} \in {\cal L}_{\alpha_k}(T_k)$ and $\lim_{k \to \infty} \lim_{s \in T_k} r(s)$ exists. Then we set $\lim_T r:=\lim_{s \in T}r(s):=\lim_{k \to \infty} \lim_{s \in T_k} r(s)$
\end{Def}
Observe that the existence and the value of $\lim_T r$ depends only on $r\restricted_{T\setminus T'}$.
The following observations rely on straightforward transfinite inductions similar to the one used in the proof of Lemma \ref{extract}.

\begin{Lem}\label{l:BolzanoWeierstrass} Let $\alpha\in [0,\omega_1)$, $T \in {\cal T}_\alpha$ and $r:T\to \Rdb$.

\smallskip

(i) Assume that $r\in {\cal L}_\alpha(T)$ and that $S\subset T$ with $S\in {\cal T}_\alpha$. Then $r\restricted_S \in {\cal L}_\alpha(S)$ and $\lim_T r=\lim_S r$.

\smallskip

(ii) Assume that $r:T \to \Real$ is bounded. Then there exists $S\subset T$ such that $S\in {\cal T}_\alpha$ and $r\restricted_S \in  {\cal L}_\alpha(S)$.

\smallskip

(iii) Assume that $r\in  {\cal L}_\alpha(T)$. Then for each $\varepsilon>0$ there exists $S\subset T$ such that $S\in {\cal T}_\alpha$ and for all $s \in S \setminus S^1$, we have $|r(s)-\lim_{T}r|<\varepsilon$.
\end{Lem}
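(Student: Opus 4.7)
The plan is to prove the three statements in parallel by a single transfinite induction on $\alpha<\omega_1$. The base case $\alpha=0$ is immediate: $T=\{\emptyset\}$, any map lies in $\mathcal{L}_0(T)$ with $\lim_T r=r(\emptyset)$, and all three claims are trivial. For the inductive step, fix $T\in \mathcal{T}_\alpha$ and write
\[
T=\set{\emptyset}\cup \bigcup_{k\in \Ndb} (n_k)^\smallfrown T_k,
\]
where $T_k\in \mathcal{T}_{\alpha_k}$, $n_k\nearrow\infty$, and $(\alpha_k)$ either satisfies $\alpha_k\nearrow\alpha$ (if $\alpha$ is a limit) or $\alpha_k=\beta$ with $\alpha=\beta+1$ (if $\alpha$ is a successor). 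The key point in each step will be that passing to a subsequence $(k_l)$ with $k_l\to\infty$ preserves the convergence pattern $\alpha_{k_l}\nearrow \alpha$ (or $\alpha_{k_l}=\beta$), so the resulting subtree stays in $\mathcal{T}_\alpha$.

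For (i), given $S\subset T$ with $S\in \mathcal{T}_\alpha$, the successors of the root of $S$ form a subsequence $(n_{k_l})_l$ of $(n_k)_k$, and the branches $S_l:=\{s:(n_{k_l})^\smallfrown s\in S\}$ satisfy $S_l\subset T_{k_l}$ and $S_l\in \mathcal{T}_{\alpha_{k_l}}$. By the inductive hypothesis, $r\restricted_{S_l}\in \mathcal{L}_{\alpha_{k_l}}(S_l)$ and $\lim_{S_l} r=\lim_{T_{k_l}} r$. Since $(\lim_{T_k} r)_k$ converges to $\lim_T r$ by hypothesis, so does the subsequence $(\lim_{T_{k_l}} r)_l$. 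Thus $r\restricted_S\in \mathcal{L}_\alpha(S)$ and $\lim_S r=\lim_T r$.

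For (ii), apply the inductive hypothesis to each $T_k$: there is $S_k^0\subset T_k$ with $S_k^0\in \mathcal{T}_{\alpha_k}$ and $r\restricted_{S_k^0}\in \mathcal{L}_{\alpha_k}(S_k^0)$. Setting $\ell_k:=\lim_{S_k^0} r$, the boundedness of $r$ ensures that $(\ell_k)$ is bounded, and Bolzano--Weierstrass yields a strictly increasing subsequence $(k_l)$ such that $(\ell_{k_l})$ converges. Then
\[
S:=\set{\emptyset}\cup \bigcup_l (n_{k_l})^\smallfrown S_{k_l}^0
\]
lies in $\mathcal{T}_\alpha$ and $r\restricted_S\in \mathcal{L}_\alpha(S)$ by the very definition of $\mathcal{L}_\alpha$.

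For (iii), let $\ell:=\lim_T r$ and $\varepsilon>0$. Since $\ell_k:=\lim_{T_k} r\to \ell$, choose a strictly increasing sequence $(k_l)$ with $|\ell_{k_l}-\ell|<\varepsilon/2$ for all $l$; in the limit case this can be done while retaining $k_l\to\infty$, and in the successor case any subsequence works. By the inductive hypothesis applied to each $T_{k_l}$ with tolerance $\varepsilon/2$, pick $S_l\subset T_{k_l}$ with $S_l\in \mathcal{T}_{\alpha_{k_l}}$ and $|r(s)-\ell_{k_l}|<\varepsilon/2$ for every $s\in S_l\setminus S_l^1$. Then $S:=\set{\emptyset}\cup \bigcup_l (n_{k_l})^\smallfrown S_l$ is in $\mathcal{T}_\alpha$; its terminal nodes are exactly the nodes of the form $(n_{k_l})^\smallfrown s$ with $s\in S_l\setminus S_l^1$, and for each such node the triangle inequality gives $|r((n_{k_l})^\smallfrown s)-\ell|<\varepsilon$, as required. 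The only subtle point throughout is the verification that the extracted subtree belongs to $\mathcal{T}_\alpha$ (rather than to some $\mathcal{T}_\beta$ with $\beta<\alpha$), which is where the strict monotonicity of $(\alpha_k)$ in the limit case and the uniformity in the successor case are used; this is the main bookkeeping obstacle.
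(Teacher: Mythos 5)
The paper proves this lemma in a single sentence, saying only that it follows from ``straightforward transfinite inductions similar to that one used in the proof of Lemma~\ref{extract}''; your proposal carries out precisely that induction, and parts (ii) and (iii) are entirely in order. There is, however, one unjustified assertion in your treatment of part (i): you claim that the branches $S_l$ of $S$ satisfy $S_l\in{\cal T}_{\alpha_{k_l}}$, where $\alpha_{k_l}$ is the order of the corresponding branch $T_{k_l}$ of $T$. When $\alpha$ is a limit ordinal, the hypothesis $S\subset T$, $S\in{\cal T}_\alpha$ only gives $S_l\in{\cal T}_{\gamma_l}$ for some $\gamma_l\le\alpha_{k_l}$ with $\gamma_l\nearrow\alpha$; strict inequality $\gamma_l<\alpha_{k_l}$ is perfectly possible, in which case the terminal nodes of $S_l$ can be internal nodes of $T_{k_l}$ and the equality $\lim_{S_l}r=\lim_{T_{k_l}}r$ on which your argument hinges can genuinely fail (one checks this by letting $r$ vanish on the internal nodes and equal $1$ on the terminal nodes of $T$). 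This reflects an imprecision in the lemma's statement rather than a flaw in your strategy: throughout the paper (in part (ii), part (iii), Lemma~\ref{extract}, and the proof of Theorem~\ref{renorming}) the subtrees are always produced by an extraction that keeps each branch in the same class ${\cal T}_{\alpha_k}$ as the branch of $T$ it sits inside, and under that reading your proof of (i) is correct. A careful write-up should either make that convention explicit in the statement of (i), or add the short verification that the particular $S$ obtained in (ii) and (iii) has matching branch orders, so that (i) is only ever invoked for such $S$.
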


We can now proceed with the proof of Theorem \ref{renorming}. We will adapt to this new situation a construction of uniformly convex norms given in \cite{Lancien1995}.

\begin{proof}[Proof of Theorem \ref{renorming}] So let us assume that $Sz(X)\le \omega^{\alpha+1}$. Then we get from Corollary \ref{SzCz} that $Cz(X)\le \omega^{\alpha+1}$. Fix $k\in \Ndb$. We define inductively for $n\in \Ndb$:
$$A^0_k:=B_{X^*}, \ \  A^{n+1}_k:=\overline{\rm conv}^*\big([\sigma]^{\omega^\alpha}_{2^{-k}}(A^n_k)\big).$$
The fact that $Cz(X)\le \omega^{\alpha+1}$ implies that for all $k\in \Ndb$, there exists $n\in \Ndb$ such that $A^n_k=\emptyset$. Then denote $N_k:=\min\{n \in \Natural: A^n_k=\emptyset\} -1$. We define
\[f(x^*)=\|x^*\|+\sum_{k=1}^\infty\frac{1}{2^k N_k}\sum_{n=1}^{N_k} \dist(x^*,A^n_k).
\]
It is easily checked that the sets $A^n_k$ are symmetric and convex.
We define $\abs{\ }$ on $X^*$ to be  the Minkowski functional of the set $C=\set{f\leq 1}$. Since $\|x^*\|\le f(x^*)\le 2\|x^*\|$, we have that $|\ |$ is an equivalent norm on $X^*$ satisfying $\|x^*\|\le |x^*| \le 2\|x^*\|$. Moreover, the sets $A^n_k$ are weak$^*$-closed. Therefore $f$ is weak$^*$ lower semi-continuous and $|\ |$ is the dual norm of an equivalent norm on $X$, still denoted $|\ |$.

Let now $\varepsilon>0$ and $x^* \in s_\varepsilon^{\omega^\alpha}(\newball)$ (the distances and diameters are meant with the original norm $\|\ \|$). Then there exist $T \in {\cal T}_{\omega^\alpha}$ and $\displaystyle(x^*_s)_{s \in T} \subset \newball$ weak$^*$-continuous and $\frac{\eps}{2}$-separated such that $x^*_\emptyset=x^*$. For $k\in \Ndb$ and $l\le N_k$, we define $r^l_k:T \to \Real$  by $r^l_k(s):= \dist(x_s^*,A^l_k)$.

Let $k \geq 1$ such that $\frac\varepsilon8 \leq 2^{-k}<\frac\varepsilon4$,  $\xi=\frac{\varepsilon}{64N_k}$ and $k_0 >k$ such that $\sum_{i=k_0}^\infty 2^{-i}<\frac{\xi}{2^kN_k}$. By passing to a subtree, we may assume, using Lemma \ref{l:BolzanoWeierstrass}, that for all $i<k_0$ and all $l\le N_i$, $r^l_i \in {\cal L}_{\omega^\alpha}(T)$. So, for each $i<k_0$ and $1\leq l\leq N_i$ we denote $d^l_i:=\displaystyle\lim_{s \in T} \dist(x^*_s,A_i^l)$. Then by using Lemma \ref{l:BolzanoWeierstrass}~(iii) and passing to a further subtree, we may assume that for each $i<k_0$ and each $1\leq l\leq N_i$ we have $\abs{d(x^*_s,A_i^l)- d^l_i} < \frac{\xi}{2^kN_k}$ for all $s \in T\setminus T^1$.
By the weak$^*$ lower semi-continuity of the distance functions this implies that $d(x_s^*, A_i^l)\leq d^l_i +\frac{\xi}{2^kN_k}$ for all $s \in T$. Note also that we have $d(x^*, A_i^l) \leq d^l_i$. Let now $\gamma=\frac{\varepsilon}{16N_k}=4\xi$.

\begin{Claim} \label{gap} There exists $l \in \set{1,\ldots,N_k}$ such that $\dist(x^*,A_k^l)\le d^l_k - \gamma$.
\end{Claim}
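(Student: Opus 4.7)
The plan is to argue by contradiction. Assume that $\dist(x^*,A_l^k)\ge d_l^k-\gamma$ for every $l\in\{1,\ldots,N_k\}$. Using the refinements performed before the claim, the weak$^*$-continuous $\frac{\varepsilon}{2}$-separated family $(x^*_s)_{s\in T}\subset B_{X^*}$ satisfies $\dist(x^*_s,A_l^k)\le d_l^k+\xi/(2^kN_k)$ for all $s\in T$. The strategy is to transport this tree into each successive $A_l^k$ via Lemma~\ref{l:TreeTransport}, producing at each step a point of $A_{l+1}^k$ that is norm-close to $x^*$; the contradiction hypothesis will then translate into a linear growth bound for $d_l^k$, and a final transport at $l=N_k$ will deliver a point of $A_{N_k+1}^k=\emptyset$.

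For the transport step: fix $l\in\{0,\ldots,N_k\}$ and apply Lemma~\ref{l:TreeTransport} with $A=A_l^k$, $B=B_{X^*}$, $b=\varepsilon/2$, and $a$ arbitrarily close to $d_l^k+\xi/(2^kN_k)$ from above. It yields a subtree $S_l\subset T$ in ${\cal T}_{\omega^\alpha}$ and a weak$^*$-continuous family $(y^*_{l,s})_{s\in S_l}\subset A_l^k$ that is $\bigl(\tfrac{\varepsilon}{2}-2d_l^k-o(1)\bigr)$-separated, whose root $y^{(l)}:=y^*_{l,\emptyset}$ satisfies $\|y^{(l)}-x^*\|\le d_l^k+\xi/(2^kN_k)+o(1)$. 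Provided $d_l^k<\varepsilon/8$, this separation exceeds $2^{-k}$, and Lemma~\ref{treeCharSzlenk}(ii) then gives $y^{(l)}\in[\sigma]^{\omega^\alpha}_{2^{-k}}(A_l^k)\subset A_{l+1}^k$.

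An induction on $l\le N_k$ will then establish $d_l^k\le l\bigl(\gamma+\xi/(2^kN_k)\bigr)$. The base case $d_0^k=0$ is immediate since $(x^*_s)\subset B_{X^*}=A_0^k$. For the step with $l<N_k$, the bound already known yields $d_l^k\le N_k\gamma+\xi/2^k=\varepsilon/16+o(1)<\varepsilon/8$, which makes the transport of the previous paragraph applicable, so $\dist(x^*,A_{l+1}^k)\le\|x^*-y^{(l)}\|\le d_l^k+\xi/(2^kN_k)+o(1)$. Combined with the contradiction hypothesis $\dist(x^*,A_{l+1}^k)\ge d_{l+1}^k-\gamma$ this rewrites as $d_{l+1}^k\le d_l^k+\gamma+\xi/(2^kN_k)+o(1)$, closing the induction.

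A last application of the transport step at $l=N_k$ is still legitimate since $d_{N_k}^k\le\varepsilon/16+o(1)<\varepsilon/8$, and produces $y^{(N_k)}\in A_{N_k+1}^k$, contradicting $A_{N_k+1}^k=\emptyset$. The main difficulty is the bookkeeping of the separation losses: each transport costs $2d_l^k$ in separation, so the cumulative $d_l^k$ must stay below the threshold $\frac{\varepsilon}{4}-\frac{2^{-k}}{2}$ throughout all $N_k$ steps. The specific choices $\gamma=\varepsilon/(16N_k)$ and $\xi=\gamma/4$ are calibrated precisely so that the linear bound $d_l^k\lesssim l\gamma\le\varepsilon/16$ stays comfortably within this threshold.
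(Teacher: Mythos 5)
Your proof is correct and follows essentially the same strategy as the paper's: argue by contradiction, transport the $\frac{\varepsilon}{2}$-separated tree into successive levels $A_l^k$ via Lemma~\ref{l:TreeTransport}, use the contradiction hypothesis to turn the resulting root estimate $\dist(x^*,A_{l+1}^k)\lesssim d_l^k$ into a linear recursion for $d_l^k$, and finally exhibit a separated tree of height $\omega^\alpha$ inside $A_{N_k}^k$, which forces $[\sigma]^{\omega^\alpha}_{2^{-k}}(A_{N_k}^k)\neq\emptyset$, contradicting $A_{N_k+1}^k=\emptyset$. The only cosmetic differences are that you anchor the induction at $l=0$ (via one extra transport) where the paper observes $x^*\in A_1^k$ directly, you take $B=B_{X^*}$ rather than $B_{|\ |}$, and you use the tighter slack $\xi/(2^kN_k)$ for the parameter $a$ where the paper generously takes $a=d_l^k+\xi$; none of these change the argument. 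The accumulating $o(1)$ terms in your bookkeeping should really be fixed uniformly (small enough relative to $\xi/N_k$) or removed by taking the infimum over admissible $a$, but the margins you chose leave ample room, so the argument closes.
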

\begin{proof}[Proof of Claim \ref{gap}] Otherwise for all $l \in \set{1,\ldots,N_k}$ we have $\dist(x^*,A_k^l)> d^l_k-\gamma$.
Then we will show by induction that for all $l\le N_k$, $d^l_k<\gamma l+\xi(l-1)$.\\
For $l=1$ we have that $x^* \in s^{\omega^\alpha}_{2^{-k}}(\newball)\subset A_k^1$. Therefore $d^1_k<\gamma$.\\
If $d^l_k<\gamma l+\xi(l-1)$ has been proved for $l\leq N_k-1$, we can use Lemma~\ref{l:TreeTransport} with the following values $A=A_k^l$, $B=B_{|\ |}$, $a=d^l_k+\xi$, $b=\frac\varepsilon2$ and replacing $\alpha$ with $\omega^\alpha$. Notice that $a=d^l_k+\xi<(\gamma+\xi)l\leq\frac{\eps}{8}=\frac14 b$.
So there exists $S\in {\cal T}_{\omega^\alpha}$ and $(y^*_s)_{s\in S}$ in $A_k^l$ which is weak$^*$-continuous, $(\frac\varepsilon2-2\frac\varepsilon8)$-separated so that its root $y^*$  satisfies $\|y^*-x^*\| \leq  d^l_k+\xi < (\gamma+\xi) l$. Note that $y^* \in A_k^{l+1}$. It follows that $d^{l+1}_k-\gamma<\dist(x^*,A_k^{l+1})< (\gamma+\xi) l$ and therefore  $d^{l+1}_k<\gamma (l+1)+\xi l$.\\
Now applying Lemma \ref{l:TreeTransport} with $A=A_k^{N_k}$, $B=B_{|\ |}$, $a=N_k (\gamma+\xi)\leq\frac\varepsilon8$, $b=\frac\varepsilon2$ and replacing $\alpha$ with $\omega^\alpha$, we produce a weak$^*$-continuous $(\frac\varepsilon2-2\frac\varepsilon8)$-separated tree of height $\omega^\alpha$ in $A_k^{N_k}$ which implies that $[\sigma]^{\omega^\alpha}_{2^{-k}}(A_k^{N_k}) \neq \emptyset$. This contradiction proves our claim.
\end{proof}

We now conclude the proof of Theorem \ref{renorming}. Take any $s \in T\setminus T^1$. We recall that  $\dist(x^*_s,A_j^l)>d^l_j-\frac{\xi}{2^kN_k}$ for all $1\leq j < k_0$ and $1\leq l\leq N_j$. With another application of Lemma \ref{l:BolzanoWeierstrass}, we may also assume that for all $s \in T\setminus T^1$ we have that $\|x^*_s\|\geq \lim_{t\in T} \|x^*_t\| - \frac{\xi}{2^kN_k}$. We now have by our choice of $k_0$, $\xi$ and $\gamma$, by Claim \ref{gap} and the weak$^*$ lower semi-continuity of all involved terms, that for all $s \in T\setminus T^1$
\[
 \begin{split}
  f(x^*)&=\|x^*\|+\sum_{j=1}^{k_0-1}\frac{1}{2^jN_j} \sum_{n=1}^{N_j} \dist(x^*, A_j^n)+\sum_{j=k_0}^{\infty}\frac{1}{2^jN_j} \sum_{n=1}^{N_j} \dist(x^*, A_j^n)\\
&\leq \lim_{t \in T_{\omega^\alpha}} \|x^*_t\|+\left(-\frac{\gamma}{2^kN_k}+\sum_{j=1}^{k_0-1}\frac{1}{2^jN_j} \sum_{n=1}^{N_j} d^n_j \right)+\frac{\xi}{2^kN_k}\\
&\leq \|x^*_s\|+\frac{\xi}{2^kN_k}+\left(-\frac{\gamma}{2^kN_k}+\sum_{j=1}^{k_0-1}\frac{1}{2^jN_j} \sum_{n=1}^{N_j} (\dist(x^*_s,A_j^n)+\frac{\xi}{2^kN_k}) \right)+\frac{\xi}{2^kN_k}\\
&\leq f(x^*_s)-\frac{\xi}{2^kN_k}\leq 1-\frac{\varepsilon}{2^k64N_k^2}\le 1-\frac{\eps^2}{2^9N_k^2}.
 \end{split}
\]
We have shown that for any $\eps>0$ there exists $\delta(\eps)>0$ such that $f(x^*)\le 1- \delta(\eps)$, whenever $x^* \in s_\varepsilon^{\omega^\alpha}(B_{|\ |})$. Note now that $f$ is 2-Lipschitz for $\|\ \|_{X^*}$ and that $B_{|\ |} \subset B_{X^*}$. So, for any $x^*\in s_\varepsilon^{\omega^\alpha}(B_{|\ |})$ and any $t\in [1,1+\frac{\delta(\eps)}{2}]$, we have that
$$f(tx^*)\le f(x^*)+2(t-1)\le 1.$$
Therefore
$$\forall x^*\in s_\varepsilon^{\omega^\alpha}(B_{|\ |})\ \ |x^*|\le \frac{1}{1+\frac{\delta(\eps)}{2}}.$$
This proves that $|\ |$ is a $\omega^\alpha$-UKK$^*$ norm.
\end{proof}

\begin{Remark} As we have already mentioned, the Szlenk index of a separable Banach space $X$ is either $\infty$ (exactly when $X^*$ is non-separable) or of the form $Sz(X)=\omega^\alpha$ with $\alpha<\omega_1$. If $\alpha<\omega_1$ is a successor ordinal, it is known since \cite{Samuel1983} that there exists a countable compact metric space $K$ such that $Sz(C(K))=\omega^\alpha$. Let us now mention the complete description of the possible values of the Szlenk index recently obtained by R. Causey (\cite{Causey2016}, Theorem 1.4). The set of all $\alpha<\omega_1$ such that there exists a separable Banach space $X$ satisfying $Sz(X)=\omega^\alpha$ is exactly:
$$[0,\omega_1]\setminus\{\omega^\xi,\ \xi<\omega_1\ {\rm and}\ \xi\ {\rm is\ a\ limit\ ordinal}\}.$$
The case $Sz(X)=\omega^\alpha$ with $\alpha<\omega_1$ limit ordinal is not covered by our renorming theorem. There is a good reason for this. Indeed, in that case, for any $\beta<\omega^\alpha$, $\beta.\omega<\omega^\alpha$. Then the usual homogeneity argument makes it impossible to have $s_\eps^\beta(B_{X^*})\subset (1-\delta)B_{X^*}$  for some $\delta>0$.
\end{Remark}

\medskip\noindent{\bf Aknowledgements.} The authors are extremely grateful to the anonymous referee for his or her numerous valuable comments and corrections which lead to a considerable improvement of the presentation.

\end{document}